\documentclass[submit,hidelinks,onefignum,onetabnum]{siamart250211}



\usepackage{lipsum}
\usepackage{amsfonts}
\usepackage{graphicx}
\usepackage{epstopdf}
\usepackage{algorithmic}
\ifpdf
  \DeclareGraphicsExtensions{.eps,.pdf,.png,.jpg}
\else
  \DeclareGraphicsExtensions{.eps}
\fi

\usepackage{multirow}

\makeatletter
\@mparswitchfalse        
\makeatother
\normalmarginpar



\newsiamremark{remark}{Remark}
\newsiamremark{hypothesis}{Hypothesis}
\newsiamremark{as}{Assumption}
\newsiamremark{setting}{Setting}

\crefname{hypothesis}{Hypothesis}{Hypotheses}
\newsiamthm{claim}{Claim}
\newsiamremark{fact}{Fact}
\crefname{fact}{Fact}{Facts}



\def\N{\mathbb{N}}

\def\P{\mathbb{P}}
\definecolor{myblue}{rgb}{0.2,0,0.9}
\definecolor{blue_violet}{rgb}{0.54, 0.17, 0.89}
\definecolor{darkgreen}{rgb}{0,0.35,0}

\allowdisplaybreaks

\makeatletter
\DeclareRobustCommand*\cal{\@fontswitch\relax\mathcal}
\makeatother

\theoremstyle{plain}
\numberwithin{equation}{section}

\DeclareMathOperator*{\argmin}{arg\,min}

\DeclareMathOperator*{\esssup}{ess\,sup}
\DeclareMathOperator*{\essinf}{ess\,inf}



\usepackage{amssymb}


\headers{Robust exploratory stopping under ambiguity in RL}{J. Ye, H.Y. Wong, and K. Park}

\title{
    ROBUST EXPLORATORY STOPPING UNDER AMBIGUITY IN REINFORCEMENT LEARNING
\thanks{Submitted to the editors \today.
\funding{H.\;Y.\;Wong acknowledges the support from the Research Grants Council of Hong Kong (grant DOI:  GRF14308422). K.\;Park acknowledges the support from the National
Research Foundation of Korea (grant DOI: RS-2025-02633175). }
}
}

\author{
Junyan Ye\thanks{Department of Statistics and Data Science, The Chinese University of Hong Kong, Shatin, N.T., Hong Kong 
  (\email{junyanye@link.cuhk.edu.hk}, \email{hywong@cuhk.edu.hk}).}
\and Hoi Ying Wong\footnotemark[2]
  \and
  Kyunghyun Park\thanks{Division of Mathematical Sciences,
Nanyang Technological University, Singapore
  (\email{kyunghyun.park@ntu.edu.sg}).}
  }

\usepackage{amsopn}






\begin{document}

\maketitle

\begin{abstract}
    We propose and analyze a continuous-time robust reinforcement learning framework for optimal stopping under ambiguity. In this framework, an agent chooses a robust exploratory stopping time motivated by two objectives: {robust decision-making} under ambiguity and {learning} about the unknown environment. 
	Here, ambiguity refers to considering multiple probability measures~dominated by a reference measure, reflecting the agent’s awareness that the reference measure representing her learned belief about the environment would be {erroneous}. 
	Using the $g$-expectation framework, we reformulate the optimal stopping problem under ambiguity as a robust exploratory control problem with Bernoulli distributed controls. 
    We then characterize the optimal Bernoulli distributed control via backward stochastic differential equations and, based on this, construct the robust exploratory stopping time that approximates the optimal stopping time under ambiguity. Last, we establish a policy iteration theorem and implement it as a reinforcement learning algorithm.  Numerical experiments demonstrate the convergence, robustness, and scalability of our reinforcement learning algorithm across different levels of ambiguity and exploration. 
\end{abstract}

\begin{keywords}
optimal stopping, exploratory control, reinforcement learning, ambiguity, robust optimization, $g$-expectation, policy iteration.
\end{keywords}

\begin{MSCcodes}
60G40, 60H10, 68T07, 49L20
\end{MSCcodes}

\section{Introduction}\label{sec:intro}
Optimal stopping is a class of decision problems in which one seeks to choose a time to take a certain action so as to maximize an expected reward. It is applied in various fields, for instance to analyze the optimality of the sequential probability ratio test in statistics (e.g.,\;\cite{wald1948optimum}), to study consumption habits in economics  (e.g.,\;\cite{dybvig1995dusenberry})
and notably to derive American option pricing (e.g.,\;\cite{peskir2006optimal}). A common challenge arising in all these fields is finding the best model to describe the underlying process or probability measure, which is usually {\it unknown}. Although significant efforts have been made to propose and analyze general stochastic models with improved estimation techniques, a margin of error in estimation inherently exists. 

In response to such model misspecification and estimation errors, recent works, Dai et al.\;\cite{dai2024learning} and Dong\;\cite{DongRLstopping}, have cast optimal stopping problems within the continuous time reinforcement learning (RL) framework of Wang et al.\;\cite{wang2020reinforcement} and Wang and Zhou\;\cite{wang2020continuous}. Arguably, the exploratory (or randomized) optimal stopping framework is viewed as {\it model-free}, since agents, even without knowledge of the true model or underlying dynamics of the environment, can learn from observed data and determine a stopping rule that yields the best outcome. In this sense, the framework provides a systematic way to balance exploration and exploitation in optimal~stopping. 



Although model-free RL is sometimes interpreted as enabling direct deployment without an explicit model, this view can be misleading in practice. Limited and noisy training data may lead to inaccurate learned dynamics or value surrogates, and environmental conditions can change between training and execution; in physical or safety-critical systems, such discrepancies can entail severe consequences. 
Moreover, model-free learning is typically data-intensive, so one often relies on simulators or limited real data, thereby reintroducing {\it ambiguity} (or Knightian uncertainty) and creating a trade-off between safety during training and generalization at deployment \cite{christiano2016transfer,morimoto2005robust}. 
In addition, real-world training is costly, time-consuming, and potentially unsafe, which often restricts data collection to a narrow set of scenarios and may result in overfitting and poor generalization under distributional shifts \cite{pinto2017robust}. These considerations motivate robust RL approaches that seek policies whose performance remains reliable under model ambiguity and related disturbances. 

This motivation is further reinforced by the observation that learning need not eliminate ambiguity at the level of beliefs. In particular, as argued in \cite{epstein2003recursive} and emphasized by \cite{marinacci1999limit}, the link between empirical frequencies and asymptotic (learned) beliefs can be weakened by the degree of ambiguity in an agent’s prior beliefs, so that ambiguity may persist even with extensive learning. Consequently, the reliability of purely model-free exploratory frameworks can be fundamentally limited under ambiguity and related forms of model misspecification. In this sense, robust RL is both practically desirable and conceptually natural. Arguably, worst-case or minimax formulations provide a canonical framework for robust RL \cite{garcia2015comprehensive-safeRL}. 
\color{black}

The aim of this article is to propose and analyze a continuous-time RL framework for optimal stopping under ambiguity. Our framework starts with revisiting an optimal stopping problem under $g$-expectation (
see 
Coquet et al.\;\cite{coquet2002filtration}, Peng\;\cite{peng1997backward}):
Let ${\cal T}_{t}$ be the set of all stopping times with values in $[t,T]$. Denote by ${\cal E}_t^g[\cdot]$ the conditional $g$-expectation with a driver $g:\Omega\times[0,T]\times \mathbb{R}^d\to \mathbb{R}$ 
, representing a worst-case expectation (see Definition~\ref{def:g_expect}, Remark \ref{rem:represent_g_exp}). 
Then, the optimal stopping problem~is 
\begin{align}\label{eq:intro_OST_g}
V_t^{x}:= \esssup_{\tau \in {\cal T}_t}{\cal E}^g_t\bigg[\int_t^\tau e^{-\int_t^s\beta_udu} r(X_s^{x})ds + e^{-\int_t^\tau \beta_udu}R(X_\tau^{x}) \bigg], 
\end{align}
where $(\beta_t)_{t\in[0,T]}$ is a discount rate, $r:\mathbb{R}^d\to \mathbb{R}$ and $R:\mathbb{R}^d\to \mathbb{R}$ are reward~functions, and $(X_t^x)_{t\in[0,T]}$ is an It\^o-semimartingale with an initial state 
$x\in \mathbb{R}^d$; see \eqref{eq:refer_semi}. 

We then combine the penalization method of \cite{el1997backward,lepeltier2005penalization,peng2005smallest} (used to establish the well-posedness of reflected backward stochastic differential equations (BSDEs) characterizing \eqref{eq:intro_OST_g}) with the entropy regularization framework of \cite{wang2020reinforcement,wang2020continuous} to study the following robust exploratory control problem:
\begin{align}\label{eq:intro_explore_robust_problem}
	\begin{aligned}
	\overline V_t^{x;N,\lambda}:=\esssup_{\pi \in \Pi}{\cal E}^g_t&\bigg[\int_t^Te^{-\int_t^s(\beta_u+N\pi_u)du}\big(r(X_s^x)+R(X_s^x)\, N\pi_s -\lambda {\cal H}(\pi_s) \big)ds\\
	&\quad+e^{-\int_t^T(\beta_u+N\pi_u)du} R(X_T^x)\bigg], 
	\end{aligned}
\end{align}
where $\Pi$ is the set of all 
$[0,1]$-valued, Bernoulli-distributed controls randomizing stopping rules; see Remark~\ref{rem:bernoulli},
$-{\cal H}(a)$ denotes the binary differential entropy for $a\in[0,1]$; see \eqref{eq:entropy}, \color{black} $\lambda>0$ represents the level of exploration, and $N\in \mathbb{N}$ represents the penalization level used for approximation of \eqref{eq:intro_OST_g}.

In Theorem \ref{thm:verification}, we show that if the characteristics of $(X_t^x)_{t\in[0,T]}$ are sufficiently integrable (see Assumption\;\ref{as:refer_semi}), $r$ and $R$ satisfy certain regularity and growth conditions, and $(\beta_t)_{t\in[0,T]}$ is uniformly bounded (see Assumption \ref{as:reward}), then the robust exploratory control problem $\overline V^{x;N,\lambda}$ in \eqref{eq:intro_explore_robust_problem} can be characterized by a BSDE (see~\eqref{eq:g_BSDE_explore_optimal}). Moreover, its optimal Bernoulli-distributed control is given~by 
\begin{align}\label{eq:intro_optimal_explore}
	\pi^{*,x;N,\lambda}_t:=\operatorname{logit}\bigg(\frac{N}{\lambda}(R(X_t^x)-\overline V_t^{x;N,\lambda})\bigg)\in \Pi,
\end{align}
where $\operatorname{logit}(x):=(1+\exp(-x))^{-1}$, $x\in\mathbb{R}$, denotes the standard logistic function. 

Next, using $\pi^{*,x;N,\lambda}$ in \eqref{eq:intro_optimal_explore}, we define 
a stopping time by
\begin{align}\label{eq:intro_optimal_explore_stopping}    
    {\tau}^{x;N,\lambda}_t :=\inf \bigg\{s \ge t : \pi^{*,x;N,\lambda}_s\geq \frac{1}{1+2^{N(T-s)}}\bigg\} \wedge T\in {\cal T}_t.
\end{align}
Theorem \ref{thm:explore_stop_conv} shows that for any sequences $(N_n,\lambda_n)_{n\in\mathbb{N}}$ such that $N_n\uparrow \infty$ and $\lambda_n \downarrow 0$ as $n\to \infty$, ${\tau}^{x;N_n,\lambda_n}_t$ converges to the optimal stopping time of $V^x_t$ in \eqref{eq:intro_OST_g}. Moreover, the corresponding $g$-expectation value converges to $V_t^x$. In other words, under sufficiently large $N$ and sufficiently small $\lambda$, the stopping time ${\tau}^{x;N,\lambda}_t $ in \eqref{eq:intro_optimal_explore_stopping}, induced by the optimal Bernoulli-distributed control in \eqref{eq:intro_optimal_explore}, approximates the optimal stopping time of $V_t^x$. Hence, we refer to ${\tau}^{x;N,\lambda}_t $ as a {\it robust exploratory stopping time}.

It is noteworthy that a logistic form for exploratory controls as in \eqref{eq:intro_optimal_explore} already appears in the non-robust exploratory stopping problem in \cite{dai2024learning}. However, the natural question---namely, to what extent the optimal stopping time of the original stopping problem can be approximated from the exploratory controls---remains open therein. Theorem~\ref{thm:explore_stop_conv} provides an answer to this question, even in a robust framework. In particular, our result also applies to the non-robust setting. 

\color{black}
Last, under the same assumptions on $X^x,r,R$, and $\beta$, Theorem \ref{thm:policy_improvement} establishes a policy iteration result. Specifically, at each step we evaluate the $g$-expectation value function under the control $\pi \in \Pi$ from the previous iteration and then update the control in the logistic form driven by this evaluated $g$-expectation value as in~\eqref{eq:intro_optimal_explore}. This iterative process ensures that the resulting sequence of value functions and controls converge to the solution of \eqref{eq:intro_explore_robust_problem} as the number of iterations goes to~infinity.  

As an application of Theorem \ref{thm:policy_improvement}, 
we devise an RL algorithm (see Algorithm\;\ref{alg:exact_policy_iteration}) in which policy evaluation at each iteration, characterized by a PDE (see Corollary~\ref{coro:Markovian_of_policy_iter_closed_loop}),  can be implemented by the deep splitting method of Beck et al.\;\cite{Becker-Jentzen-Neufeld-Deep-Splitting-21}. 

Using our RL algorithm, Section~\ref{sec:experiment} presents several numerical examples, including robust American put- and call-option valuation. In the one-dimensional setting, we observe policy improvement and convergence under several ambiguity degrees. Moreover, stability analysis for our exploratory BSDEs solution is conducted with respect to ambiguity degree~$\varepsilon $, exploration level $\lambda $ and penalization level $ N $ in the put-type stopping problem, while robustness is illustrated in the call-type problem under different levels of dividend rate misspecification. Finally, we further investigate the policy improvement and convergence for a put-type Basket option valuation, and robustness for a call-type geometric option in multi-dimensional settings, thereby supporting the scalability of our algorithm. 

Beyond the valuation of American options, our robust exploratory stopping framework and RL algorithm can be applied in a broad class of optimal stopping problems under ambiguity, for instance to determine early termination rules in clinical trials under unknown treatment response distributions (e.g., \cite{pocock1977group}), to analyze Gittins-type indices in multi-armed bandit problems under unknown transition dynamics (e.g., \cite{gittins2011multi}), and to inform maintenance decisions such as repair or replacement of deteriorating systems under imperfectly observed failure signals (e.g., \cite{pierskalla1976survey}), among others.

Our main contributions can be summarized as follows. Beyond the standard BSDE formulation of the robust exploratory control framework, we uncover a structural link between robust exploratory controls and optimal stopping times under ambiguity, and prove---via a priori BSDE estimates and a convergence of monotone hitting times---that the robust exploratory stopping time approximates the optimal stopping time under ambiguity. Moreover, we establish a policy improvement theorem in a general setting that allows unbounded payoffs and arbitrary initial policy, while accommodating an ambiguity-induced driver in the generator; see Remarks \ref{rem:policy_iteration_contribution},\;\ref{rem:markovian_closure_PI} for comparisons with related works \cite{dai2024learning,DongRLstopping}. Finally, in our RL algorithm, we propose a new policy evaluation scheme based on the deep splitting method, combined with a quadratic variation–based procedure using the observed data to handle the driver term induced by the $g$-expectation; see~\eqref{eq:loss_minimizat} in Section~\ref{sec:policy_iter}. Numerical experiments demonstrate the convergence, robustness, and scalability of the proposed algorithm.

\color{black}
\vspace{-0.5em}
\subsection{Related literature}
Sutton and Barto \cite{SuttonBarto1998} opened up the field of RL~which has since gained significant attention with notable applications 
\cite{Guo2025DeepSeekR1-nature,Mnih2015HumanLevel,autodriver,Silver2016Mastering,Silver2017MasteringWithout}. In continuous-time settings, \cite{wang2020reinforcement,wang2020continuous} introduced an RL framework based on exploratory or relaxed controls, motivating subsequent development of RL schemes~\cite{huang2025convergence,Policy-evaluation,Policy-gradient,small-q}, applications and extensions~\cite{DaiLearning2023,DaiRecursive20203,han2023choquet,exploratoryHJB,Liwu2024}. 
A substantial literature on robust RL exists in a discrete-time setting \cite{blanchet2023double, panaganti2022robust, RobustRL-model-mismatch, RobustRL-deep}, while a continuous-time robust RL remains less explored. \color{black}

Our robust exploratory stopping problem under ambiguity aligns with, and can be viewed as, a robust analog of \cite{dai2024learning,DongRLstopping}. 
Recently, an exploratory stopping-time framework based on a singular control formulation has also been proposed by \cite{dianetti2024exploratory}. 
A related recent work of \cite{frikha2025entropy} studies an exploratory optimal stopping framework under discrete stopping times but without ambiguity. Lastly,  we refer to \cite{becker2019deep,becker2021solving,reppen2025neural} for machine learning (ML) approaches to optimal stopping.

Moving away from the continuous-time RL (or ML) results to the literature on continuous-time optimal stopping under ambiguity, we refer to \cite{bayraktar2011optimal_a,bayraktar2011optimal_b,nutz2015optimal,park2023irreversible,PW23,riedel2009optimal}. 
More recently, \cite{tankov2025optimalstoppingdivestmenttiming} proposes a framework for optimal stopping that incorporates both ambiguity and learning. Rather than adopting a worst-case approach, it employs the smooth ambiguity framework of Klibanoff et al.~\cite{klibanoff2005smooth} and the Bayesian learning. 
\subsection{Notations and preliminaries}\label{sec:notation}
Fix $d\in \mathbb{N}$. We endow $\mathbb{R}^d$ and $\mathbb{R}^{d\times d}$ with the Euclidean inner product $\langle \cdot,\cdot \rangle$ and the Frobenius inner product $\langle \cdot,\cdot\rangle_{\operatorname{F}}$, respectively. Moreover, we denote by $|\cdot|$ the Euclidean norm and denote by $\|\cdot\|_{\operatorname{F}}$ the Frobenius~norm.

Let $(\Omega,{\cal F}, \mathbb{P})$ be a probability space and let $B:=(B_t)_{t\geq 0}$ be a $d$-dimensional standard Brownian motion starting with $B_0=0$. Fix $T>0$ a finite time horizon, and let $\mathbb{F}:=({\cal F}_t)_{t\in [0,T]}$ be the usual augmentation of the natural filtration generated by $B$, i.e., ${\cal F}_t:=\sigma(B_s;s\leq t)\vee {\cal N}$, where ${\cal N}$ is the set of all $\mathbb{P}$-null subsets. 

For any probability measure $\mathbb{Q}$ on $(\Omega,{\cal F})$, we write $\mathbb{E}^{\mathbb{Q}}[\cdot]$ for the expectation under~$\mathbb{Q}$ and $\mathbb{E}^{\mathbb{Q}}_t[\cdot]:=\mathbb{E}^{\mathbb{Q}}[\cdot|{\cal F}_t]$ for the conditional expectation under $\mathbb{Q}$ with respect to ${\cal F}_t$ at time $t\geq 0$. Moreover, we set $\mathbb{E}[\cdot]:=\mathbb{E}^{\mathbb{P}}[\cdot]$ and $\mathbb{E}_t[\cdot]:=\mathbb{E}^{\mathbb{P}}_t[\cdot]$ for $t\geq 0$. For any $p\geq 1$, $k\in \mathbb{N}$ and $t\in[0,T]$, consider the following sets:
\begin{itemize}
	\item  $L^p({\cal F}_t;\mathbb{R}^k)$ is the set of all $\mathbb{R}^k$-valued, ${\cal F}_t$-measurable random variables $\xi$ such that $\|\xi \|_{L^p}^p:=\mathbb{E}[|\xi|^p]<\infty$; 
	\item  $\mathbb{L}^p(\mathbb{R}^k)$ is the set of all $\mathbb{R}^k$-valued, $\mathbb{F}$-predictable processes $Z=(Z_t)_{t\in[0,T]}$ such that $\|Z\|^p_{\mathbb{L}^p}:=\mathbb{E}[\int_0^T|Z_t|^pdt]<\infty$;  
	\item  
    $\mathbb{S}^p(\mathbb{R}^k)$ is the set of all $\mathbb{R}^k$-valued, $\mathbb{F}$-progressively measurable processes $Y=(Y_t)_{t\in[0,T]}$ such that $\|Y\|_{\mathbb{S}^p}^p:=\mathbb{E}[\sup_{t\in[0,T]}|Y_t|^p]<\infty$; 
	\item  ${\cal T}_{t}$ is the set of all $\mathbb{F}$-stopping times $\tau$ with values in $[t,T]$.  
\end{itemize}

\section{Optimal stopping under ambiguity}\label{sec:robust_stopping}
Consider the optimal stopping of an agent facing ambiguity, where the agent is {\it ambiguity-averse} and his/her stopping time is determined by observing an {ambiguous} underlying state process in a continuous-time environment. We model the agent's preference and the environment by using the $g$-expectation ${\cal E}^g[\cdot]$ (see \cite{coquet2002filtration,peng1997backward}) defined as follows.
\begin{definition}\label{def:g_expect}
	Let the driver term $g:\Omega\times[0,T]\times \mathbb{R}^d\to \mathbb{R}$ be a mapping such that the following conditions hold: 
	\begin{itemize}
		\item [(i)] for $z\in \mathbb{R}^d$, $(g(t,z))_{t\in[0,T]}$ is $\mathbb{F}$-progressively measurable with $\|g(\cdot,z)\|_{\mathbb{L}^2}<\infty$;
		\item [(ii)] there exists some constant $\kappa> 0$ such that for every $(\omega,t)\in \Omega\times[0,T]$ and $z,z'\in \mathbb{R}^d$ $\big|g(\omega,t,z)-g(\omega,t,z')\big| \leq \kappa |z-z'|;$
		\item [(iii)] for every $(\omega,t)\in \Omega\times[0,T]$, $g(\omega,t,\cdot):\mathbb{R}^d\to\mathbb{R}$ is concave, $g(\omega,t,0)=0$, 
        and satisfies that $g(\omega,t,cz)= cg(\omega,t,z)$ for all $z\in \mathbb{R}^d$ and $c\in(0,1)$.\color{black}
	\end{itemize}
	Then we define ${\cal E}^{g}: L^2({\cal F}_T;\mathbb{R})\ni \xi\to {\cal E}^g[\xi]\in \mathbb{R}$~as ${\cal E}^{g}[\xi]:=Y_0,$
	where $(Y,Z)\in \mathbb{S}^2(\mathbb{R})\times \mathbb{L}^2(\mathbb{R}^d)$ is the unique solution of the following BSDE (see \cite[Theorem~3.1]{pardoux1990adapted}):
	\begin{align*}
		Y_t= \xi + \int_t^T g(s,Z_s) ds -\int_t^T Z_s dB_s,\quad t\in[0,T],
	\end{align*}
	where $(B_t)_{t\in [0,T]}$ is the fixed $d$-dimensional Brownian motion on $(\Omega,{\cal F},\mathbb{P})$. Moreover, its conditional $g$-expectation with respect to ${\cal F}_t$ 
    is defined by
	\(
	{\cal E}^{g}_t[\xi]:=Y_t\) for $t\in [0,T]$, 
	which can be extended into $\mathbb{F}$-stopping times $\tau\in {\cal T}_0$, i.e., ${\cal E}^{g}_{\tau}[\xi]:=Y_\tau.$ 
\end{definition}

\begin{remark}\label{rem:represent_g_exp}
    The $g$-expectation defined above coincides with a variational representation in the following sense (see \cite[Proposition 3.6]{el1997backward}, \cite[Proposition A.1]{ferrari2022optimal}): Define $\hat g: \Omega\times[0,T]\times \mathbb{R}^d\ni (\omega,t,\hat z)\to  \hat g(\omega,t,\hat z):=\sup_{z\in \mathbb{R}^d}\big(g(\omega,t,z)-\langle z, \hat z \rangle \big)\in \mathbb{R},$
	i.e., the convex conjugate function of $g(\omega,t,\cdot)$. Denote by ${\cal B}^g$ the set of all $\mathbb{F}$ progressively measurable processes $\vartheta=(\vartheta_t)_{t\in[0,T]}$ such that $\|\hat g(\cdot,\vartheta_\cdot )\|_{\mathbb{L}^2}<\infty$.  
	
	For any $\tau \in {\cal T}_t$ and $t\in[0,T]$, the following representation holds:
	\begin{align*}
		{\cal E}_t^g[\xi] = \essinf_{\vartheta \in {\cal B}^g} \mathbb{E}_t^{\mathbb{P}^\vartheta}\bigg[\xi+\int_t^\tau \hat g(s, \vartheta_s)ds\bigg]\quad \mbox{for}\;\; \xi \in L^2({\cal F}_{\tau};\mathbb{R}^d), 
	\end{align*} 
	where $\mathbb{P}^\vartheta$ is defined on $(\Omega,{\cal F}_T)$ through $\frac{d \mathbb{P}^\vartheta}{d \mathbb{P}|_{{\cal F}_T}}:= \exp(-\frac{1}{2}\int_0^T|\vartheta_s|^2ds+\int_0^T\vartheta_s dB_s).$
\end{remark}

For (sufficiently integrable) $\mathbb{F}$-predictable processes $(b_s^o)_{s\in[0,T]}$ and $(\sigma_s^o)_{s\in[0,T]}$ taking values in $\mathbb{R}^d$ and $\mathbb{R}^{d\times d}$ respectively, we consider an It\^o $(\mathbb{F},\mathbb{P})$-semimartingale 
\begin{align}\label{eq:refer_semi}
	X_t^{x}:=
	x+ \int_0^tb^o_s ds +\int_0^t \sigma^o_sdB_s,\quad  t\in[0,T],
\end{align}
where $x\in \mathbb{R}^d$ is fixed and does not depend on $b^o$ and $\sigma^o$.

We note that $b^o$ and $\sigma^o$ correspond to the baseline parameters (e.g., the estimators) and $X^{x}$ corresponds to the reference underlying state process. We assume the certain integrability condition on the baseline parameters. To that end, for any $p\geq 1$, let $\mathbb{L}^p(\mathbb{R}^d)$ be defined as in Section \ref{sec:notation} and let $\mathbb{L}_{\operatorname{F}}^p(\mathbb{R}^{d\times d})$ be the set of all $\mathbb{R}^{d\times d}$-valued, $\mathbb{F}$-predictable processes $H=(H_t)_{t\in[0,T]}$ such that $\|H\|^p_{\mathbb{L}_{\operatorname{F}}^p}:=\mathbb{E}[(\int_0^T\|H_t\|_{\operatorname{F}}^2dt)^{\frac{p}{2}}]<\infty$.
\begin{as}\label{as:refer_semi}
	$b^o\in \mathbb{L}^p(\mathbb{R}^d)$ and $\sigma^o\in \mathbb{L}_{\operatorname{F}}^p(\mathbb{R}^{d\times d})$ for some $p\geq 2$.
\end{as}
\begin{remark}\label{rem:suff_refer_semi}
	Either one of the following conditions is sufficient for Assumption \ref{as:refer_semi} to hold true \cite[Lemma\;2.3]{bartl2023sensitivity}:
	\begin{itemize}
		\item [(i)] $b^o$ and $\sigma^o$ are uniformly bounded, i.e., there exists some constant $C_{b,\sigma}>0$ such that $|b^o_t|+\|\sigma_t^o\|_{\operatorname{F}}\leq C_{b,\sigma}$ $\mathbb{P}\otimes dt$-a.e..
		\item [(ii)] 
		$b^o$ and $\sigma^o$ are of the following form: $b_t^o= \widetilde{b}^o(t,X_t^{x}),$ $\sigma_t^o= \widetilde{\sigma}^o(t,X_t^{x})$ $\mathbb{P}\otimes dt$-a.e.,
		where
		$\widetilde{b}^o:[0,T]\times \mathbb{R}^d\rightarrow \mathbb{R}^d$ and $\widetilde{\sigma}^o:[0,T] \times \mathbb{R}^d\rightarrow \mathbb{R}^{d\times d}$ are Borel  functions satisfying that $\lvert \widetilde{b}^o(t,y) - \widetilde{b}^o(t,\hat y) \lvert + \lVert \widetilde{\sigma}^o(t,y)- \widetilde{\sigma}^o(t,\hat y) \rVert_{\operatorname{F}} 
		\leq C_{\widetilde{b},\widetilde{\sigma}} \lvert y-\hat y \rvert$ and $\lvert \widetilde{b}^o(t,y) \lvert + \lVert \widetilde{\sigma}^o(t,y) \rVert_{\operatorname{F}} 
		\leq C_{\widetilde{b},\widetilde{\sigma}} (1+\lvert y\rvert)$ for every $t\in[0,T]$ and $y,\hat y\in \mathbb{R}^d$, 
		with some constant $C_{\widetilde{b},\widetilde{\sigma}}>0$.
	\end{itemize}
\end{remark}

\begin{remark}\label{rem:refer_semi} 
	\begin{itemize}
		\item [(i)] Under Assumption \ref{as:refer_semi}, a straightforward application of  the Burkholder Davis Gundy (BDG) inequality shows that $\|X^x\|_{\mathbb{S}^p}<\infty$. 
		\item [(ii)] In fact, both sufficient conditions given in Remark \ref{rem:suff_refer_semi} ensure that Assumption~\ref{as:refer_semi} holds for all $p\geq 2$ (see \cite[Theorems 2.3.1 and 2.4.1]{mao2007stochastic})
	\end{itemize}
\end{remark}

Having completed the descriptions of the $g$-expectation and underlying process, we describe the decision-maker's optimal stopping problem $V^x:=(V_t^x)_{t\in[0,T]}$ under ambiguity: for every $t\in [0,T]$, 
\begin{align}\label{eq:robust_stopping}
    \begin{aligned}
	&V_t^{x}:= \esssup_{\tau \in {\cal T}_t}{\cal E}^{g}_t[\operatorname{I}_t^{x;\tau}],\quad\operatorname{I}_t^{x;\tau}:=\int_t^\tau e^{-\int_t^s\beta_udu} r(X_s^{x})ds + e^{-\int_t^\tau \beta_udu}R(X_\tau^{x}),
    \end{aligned}
\end{align}
where both $r:\mathbb{R}^d\to \mathbb{R}$ and  $R:\mathbb{R}^d\to \mathbb{R}$
are some Borel functions (representing the intermediate and stopping reward functions), and $(\beta_u)_{u\in [0,T]}$ is an $\mathbb{F}$-progressively measurable process taking positive values (representing the subjective discount rate). 

\begin{as}\label{as:reward} The following conditions on $R,r$ and $\beta$ hold:
	\begin{itemize}
		\item [(i)] $R$ is continuous. Moreover, there exists some constant $C_{r,R}>0$ such that for every $y\in \mathbb{R}^d$, $|r(y)|+|R(y)|\leq C_{r,R}(1+|y|)$.
		\item [(ii)] There is some $C_\beta>0$ such that $0\leq \beta_t(\omega) \leq C_\beta$ for all $(\omega,t)\in\Omega\times [0,T]$.
	\end{itemize}
\end{as}

\begin{remark}\label{rem:wellposed_stopping}
    Under Assumptions \ref{as:refer_semi} and \ref{as:reward}, it holds for every $t\in [0,T]$ and $\tau\in {\cal T}_t$ that the integrand $\operatorname{I}_t^{x;\tau}$ given in \eqref{eq:robust_stopping} is in $L^2({\cal F}_\tau;\mathbb{R})$. Indeed, by the triangle inequality and the positiveness of $(\beta_u)_{u\in [0,T]}$, \(\mathbb{E}[|\operatorname{I}_t^{x;\tau}|]
    \leq C_{r,R}(T+1) \|X^x\|_{\mathbb{S}^1};\)
	see also Assumption \ref{as:reward}. Moreover, since $\|X^x\|_{\mathbb{S}^p}<\infty$ with the exponent $p\geq 2$ (see Remark~\ref{rem:refer_semi}\;(i)), an application of the Jensen's inequality with exponent $2$ ensures the claim to hold. As a direct consequence, 
    $V^x$ in \eqref{eq:robust_stopping} is well-defined.
\end{remark}

{\color{blue} \color{black} In the following, we remark} that $V^x$ given in \eqref{eq:robust_stopping} corresponds to a reflected BSDE with an obstacle. To this end, set for every  $(\omega,t,y,z)\in\Omega\times [0,T]\times \mathbb{R}\times \mathbb{R}^d$ by
\begin{align}\label{eq:benchmark_gen}
F_t^x(\omega,y,z):= r(X_t^x(\omega)) - \beta_t(\omega)y+ g(\omega,t,z),
\end{align}
where $g:\Omega\times[0,T]\times \mathbb{R}^d\to \mathbb{R}$ is defined as in Definition \ref{def:g_expect}, $(X_t^x)_{t\in[0,T]}$ is given in\;\eqref{eq:refer_semi}, and $(\beta_t)_{t\in[0,T]}$ is the discount rate appearing in~\eqref{eq:robust_stopping}.

Denote by $(Y^x,Z^x,K^x)$ a triplet of processes 
satisfying that 
\begin{align}\label{eq:reflected_gBSDE}
	Y_t^{x}= R (X_T^x) +  \int_t^T
	F_s^x(Y_s^x,Z_s^x)ds  -\int_t^T Z_s^x dB_s + K_T^x-K_t^x,\quad t\in[0,T].
\end{align}

We then introduce the notion of the reflected BSDE (see 
\cite[Section~2]{el1997reflected}). For this, recall the sets $\mathbb{S}^2(\mathbb{R})$ and $\mathbb{L}^2(\mathbb{R}^d)$ given in Section \ref{sec:notation}.
\begin{definition}\label{def:reflected_gBSDE}
	A triplet $(Y^x,Z^x,K^x)$ is said to be a solution
	to the reflected BSDE \eqref{eq:reflected_gBSDE} with the obstacle $(R(X_t^x))_{t\in[0,T]}$ if the following conditions hold:  
	\begin{itemize}
		\item [(i)] $Y^x\in \mathbb{S}^2(\mathbb{R})$, $Z^x\in \mathbb{L}^2(\mathbb{R}^d)$, and $K^x\in \mathbb{S}^2(\mathbb{R})$ is a nondecreasing, continuous process with $K_0^x=0$. Moreover, $(Y^x,Z^x,K^x)$ satisfies \eqref{eq:reflected_gBSDE};
		\item [(ii)]  $Y_t^x\geq R(X_t^x)$ $\mathbb{P}$-a.s., for all $t\geq 0$;
        \item [(iii)] $\int_0^T(Y_{t}^x-R(X_{t}^x))dK_t^x=0$ $\mathbb{P}$-a.s..
	\end{itemize}
\end{definition}

\begin{remark}\label{rem:wellposed_reflected_gBSDE}
    Under Assumptions \ref{as:refer_semi} and \ref{as:reward}, 
    the parameters of the reflected BSDE \eqref{eq:reflected_gBSDE} with the obstacle $(R(X_t^x))_{t\in[0,T]}$ satisfy the conditions 
    (i)--(iv) given in \cite[Section~2]{el1997reflected}.  
    This enables to apply \cite[Theorem~5.2]{el1997reflected} to ensure that there exists a unique solution $(Y^x,Z^x,K^x)$ of the reflected BSDE; see Definition \ref{def:reflected_gBSDE}. 
    
\end{remark}

The following proposition establishes that the solution to the reflected BSDE \eqref{eq:reflected_gBSDE} coincides with the Snell envelope of the optimal stopping problem under ambiguity given in \eqref{eq:robust_stopping}. This result can be seen as a robust analogue of \cite[Proposition\;2.3]{el1997reflected} and \cite[Proposition\;3.1]{lepeltier2005penalization}. Several properties of (conditional) $g$-expectation developed in \cite{coquet2002filtration} are useful in the proof presented in Section \ref{proof:pro:verification}.
\begin{proposition}\label{pro:verification}
	Suppose that Assumptions \ref{as:refer_semi} and \ref{as:reward} hold. Let $V^x$ be given in \eqref{eq:robust_stopping} (see Remark \ref{rem:wellposed_stopping}) and let $Y^x$ be the first component of the unique solution to the reflected BSDE \eqref{eq:reflected_gBSDE} with the obstacle $(R(X_t^x))_{t\in[0,T]}$ (see Remark \ref{rem:wellposed_reflected_gBSDE}). Then, we have that $V_t^x=Y_t^x$ $\mathbb{P}$-a.s.~for all $t\in[0,T]$. In particular, the stopping~time 
	\begin{align}\label{eq:robust_OST}
		\tau_{t}^{*,x}:= \inf\{s\geq t:Y_s^x\leq R(X_s^x)\}\wedge T\in {\cal T}_t 
	\end{align}
	is optimal to $V_t^x$, i.e., ${\cal E}^{g}_t[\operatorname{I}_t^{x;\tau^{*,}_t}]=V_t^x$ $\mathbb{P}$-a.s..
\end{proposition}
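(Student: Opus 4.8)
The plan is to prove the two inequalities $Y_t^x \ge V_t^x$ and $Y_t^x \le V_t^x$ separately, following the nonlinear Snell-envelope strategy underlying the cited results of El Karoui et al.\ and Lepeltier--Xu, but replacing the linear conditional expectation by the $g$-expectation throughout. The backbone of the argument is a single identity: for every $\tau \in {\cal T}_t$, the value ${\cal E}_t^g[\operatorname{I}_t^{x;\tau}]$ of the stopping rule $\tau$ coincides with the time-$t$ value of the (non-reflected) BSDE on $[t,\tau]$ driven by $F^x$ in \eqref{eq:benchmark_gen} with terminal datum $R(X_\tau^x)$. Once this identity is in place, the reflection term $K^x$ together with the comparison theorem delivers the upper bound, while the Skorokhod flatness condition identifies $\tau_t^{*,x}$ in \eqref{eq:robust_OST} as the point where the two sides meet.

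First I would establish the identity. Denote by $(\bar Y^\tau,\bar Z^\tau)$ the solution of $\bar Y_s^\tau = R(X_\tau^x) + \int_s^\tau F_u^x(\bar Y_u^\tau,\bar Z_u^\tau)\,du - \int_s^\tau \bar Z_u^\tau\,dB_u$ on $[t,\tau]$. Multiplying by the integrating factor $e^{-\int_t^s \beta_u du}$ and applying It\^o's product rule absorbs the $-\beta_s y$ part of $F^x$ and converts the running reward into its discounted form, so that $\bar Y_t^\tau$ is rewritten as the time-$t$ solution of a BSDE whose terminal datum is precisely $\operatorname{I}_t^{x;\tau}$ and whose driver is $g$. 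Invoking $g(\cdot,0)=0$ to freeze the dynamics beyond $\tau$, and the time-consistency of the $g$-expectation (Definition~\ref{def:g_expect} and \cite{coquet2002filtration}), this time-$t$ solution is exactly ${\cal E}_t^g[\operatorname{I}_t^{x;\tau}]$. This discounting step is where I expect the main difficulty: the $-\beta_s y$ term must be reconciled with the nonlinear driver $g$ across the random horizon $\tau$, which forces the integrating-factor manipulation to be carried out at the level of the BSDE (not merely in expectation) and requires the $g$-expectation to be consistently restricted to $[t,\tau]$.

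For the upper bound, fix $\tau \in {\cal T}_t$ and restrict the reflected BSDE \eqref{eq:reflected_gBSDE} to $[t,\tau]$. Since $K^x$ is nondecreasing, $K_\tau^x - K_t^x \ge 0$, so $Y^x$ is a supersolution of the BSDE with driver $F^x$ and terminal datum $Y_\tau^x$; the comparison theorem (applicable because $F^x$ is Lipschitz, using the bound on $\beta$ in Assumption~\ref{as:reward} and the Lipschitz property of $g$) gives $Y_t^x$ no smaller than the time-$t$ value of that BSDE. The obstacle condition $Y_\tau^x \ge R(X_\tau^x)$ together with monotonicity then lowers the terminal datum to $R(X_\tau^x)$, and the identity above yields $Y_t^x \ge {\cal E}_t^g[\operatorname{I}_t^{x;\tau}]$. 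Taking the essential supremum over $\tau \in {\cal T}_t$ gives $Y_t^x \ge V_t^x$.

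For the reverse inequality and optimality, I would examine the candidate $\tau_t^{*,x}$. On $[t,\tau_t^{*,x})$ one has $Y_s^x > R(X_s^x)$, so the minimality (Skorokhod) condition~(iii) of Definition~\ref{def:reflected_gBSDE} forces $K^x$ to be flat there, i.e.\ $K_{\tau_t^{*,x}}^x = K_t^x$; hence on $[t,\tau_t^{*,x}]$ the reflected BSDE reduces to a genuine BSDE with driver $F^x$. Because $Y^x$ is c\`adl\`ag, reaches the obstacle at the hitting time, and satisfies $Y_T^x = R(X_T^x)$ on $\{\tau_t^{*,x}=T\}$, one obtains $Y_{\tau_t^{*,x}}^x = R(X_{\tau_t^{*,x}}^x)$, and the identity gives $Y_t^x = {\cal E}_t^g[\operatorname{I}_t^{x;\tau_t^{*,x}}] \le V_t^x$. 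Combined with the upper bound this proves $Y_t^x = V_t^x$ and shows $\tau_t^{*,x}$ attains the supremum in \eqref{eq:robust_stopping}. The residual care is to ensure the flatness argument respects the c\`adl\`ag structure of $K^x$ and that any jump of $K^x$ at $\tau_t^{*,x}$ does not spoil $Y_{\tau_t^{*,x}}^x = R(X_{\tau_t^{*,x}}^x)$, which follows from the continuity of the obstacle (as $R$ is continuous and $X^x$ has continuous paths).
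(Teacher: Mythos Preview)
Your overall Snell-envelope strategy matches the paper's, but the ``backbone identity'' you rely on --- that ${\cal E}_t^g[\operatorname{I}_t^{x;\tau}]$ coincides with the time-$t$ value $\bar Y_t^\tau$ of the BSDE on $[t,\tau]$ with driver $F^x$ and terminal $R(X_\tau^x)$ --- is false for general $g$ satisfying Definition~\ref{def:g_expect}. After the integrating-factor step, the transformed process solves a BSDE whose driver, written in terms of the new diffusion coefficient $\tilde z = e^{-\int_t^s\beta_u du}\bar Z_s^\tau$, is
\[
(s,\tilde z)\;\longmapsto\; e^{-\int_t^s\beta_u du}\, g\big(s,\, e^{\int_t^s\beta_u du}\tilde z\big),
\]
which equals $g(s,\tilde z)$ only when $g$ is positively homogeneous of degree one in $z$. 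Definition~\ref{def:g_expect} imposes concavity, Lipschitz continuity, and $g(\cdot,0)=0$, but not homogeneity. Concavity with $g(\cdot,0)=0$ gives you only the one-sided bound $g(s,\tilde z)\ge e^{-\int_t^s\beta_u du} g(s,e^{\int_t^s\beta_u du}\tilde z)$, i.e.\ ${\cal E}_t^g[\operatorname{I}_t^{x;\tau}]\ge \bar Y_t^\tau$; combined with $Y_t^x\ge \bar Y_t^\tau$ from the supersolution argument, this does \emph{not} yield $Y_t^x\ge {\cal E}_t^g[\operatorname{I}_t^{x;\tau}]$, so your upper bound collapses.

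The paper's proof never claims this identity. Instead, after It\^o's formula it isolates the residual
\[
{\cal E}_t^g\Big[-\int_t^\tau e^{-\int_t^s\beta_u du} g(s,Z_s^x)\,ds + \int_t^\tau e^{-\int_t^s\beta_u du} Z_s^x\,dB_s\Big]
\]
and bounds it from above by passing, via the Lipschitz estimate $|g(s,z)|\le \kappa|z|$ and the domination results of \cite{coquet2002filtration}, to the $g$-expectation ${\cal E}^\kappa$ with driver $\kappa|z|$ --- which \emph{is} positively homogeneous. Under ${\cal E}^\kappa$ the integrand is then a genuine $g$-martingale (Lemma~5.5 in \cite{coquet2002filtration}), hence the residual is $\le 0$. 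The lower bound uses ${\cal E}^{-\kappa}$ symmetrically. The fix for your argument is to replace the claimed equality by this two-sided $\kappa|z|$ sandwich; your comparison-theorem route does not bypass it.
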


In the remainder of this section, we introduce auxiliary robust stochastic control problems arising from the penalization method which is used to establish the existence
of solutions to reflected BSDEs (see, e.g., \cite{el1997backward,lepeltier2005penalization,peng2005smallest}). 

	To that end, define for every $N\in \mathbb{N}$ and $(\omega,t,y,z)\in\Omega\times [0,T]\times \mathbb{R}\times \mathbb{R}^d$ by
    \begin{align}\label{eq:benchmark_gen_N}
        F_t^{x;N}(\omega,y,z):= F_t^{x}(\omega,y,z)+ N\big(R(X_t^x(\omega))-y\big)^+,
    \end{align}
    where $F^x$ is given in \eqref{eq:benchmark_gen} and $(a)^+:=\max\{a,0\}$ for $a\in \mathbb{R}$. Then we denote for every $N\in \mathbb{N}$ by $(Y^{x;N},Z^{x;N})$ a couple of processes satisfying
	\begin{align}\label{eq:g_BSDE}
		Y_t^{x;N}=R (X_T^x) +  \int_t^T
		F_s^{x;N}(Y_s^{x;N},Z_s^{x;N})
		ds-\int_t^T Z_s^{x;N} dB_s,\quad \mbox{$t\in[0,T]$}.
	\end{align}

    \begin{remark}\label{rem:penalized_BSDE}
        Under Assumptions \ref{as:refer_semi} and \ref{as:reward}, the parameters of the BSDE \eqref{eq:g_BSDE} satisfy all the conditions given in \cite[Section 3]{pardoux1990adapted}. Hence, the following hold:
        \begin{itemize}
            \item [(i)] For every $N\in \mathbb{N}$, there exists a unique solution $(Y^{x;N},Z^{x;N})\in \mathbb{S}^2(\mathbb{R})\times \mathbb{L}^2(\mathbb{R}^d)$ of the BSDE \eqref{eq:g_BSDE}; see \cite[Theorem 3.1]{pardoux1990adapted}. 
            \item [(ii)] If we set $K_t^{x;N}:=N\int_0^t(R(X_s^x)-Y_s^{x;N})^+ds$ for $t\in[0,T]$, then it follows from 
            \cite[Section\,6., Eq.\,(16)]{el1997reflected} that there exists some constant $C>0$ such that for every $N\in \mathbb{N}$, $
			\|Y^{x;N}\|_{\mathbb{S}^2}^2+\|Z^{x;N}\|_{\mathbb{L}^2}^2 + \|K_T^{x;N}\|_{L^2}^2 \leq C.$
            \item [(iii)] We recall that $(Y^x,Z^x,K^x)$ is the unique solution to \eqref{eq:reflected_gBSDE}; see Remark~\ref{rem:wellposed_reflected_gBSDE}. We have by \cite[Section 6]{el1997reflected} that as $N\to \infty$, 
            \[
            \|Y^{x;N}-Y^{x}\|_{\mathbb{S}^2}+\|Z^{x;N}-Z^x\|_{\mathbb{L}^2}+\|K^{x;N}-K^x\|_{\mathbb{S}^2}\to 0.
            \]
            In particular, it holds that $ Y_t^{x;N} \uparrow Y_t^{x}$ $\P $-a.s.\;for all $t\in[0,T]$
        \end{itemize}
    \end{remark}

    The following proposition shows that for each $N\in \mathbb{N}$ the solution to the penalized BSDE \eqref{eq:g_BSDE} can be represented by a robust stochastic control problem under ambiguity. The corresponding proof is presented in Section \ref{proof:pro:verification}.
	\begin{proposition}\label{pro:verification_control}
		Suppose that Assumptions \ref{as:refer_semi} and \ref{as:reward} hold. Let $N\in \mathbb{N}$ be given. Denote by $Y^{x;N}$ the first component of the unique solution to \eqref{eq:g_BSDE}. Then $Y^{x;N}$  admits a representation of the robust stochastic control problem in the following sense:
		Let ${\cal A}$ be the set of all $\mathbb{F}$-progressively measurable processes $\alpha=(\alpha_t)_{t\in[0,T]}$ with values in $\{0,1\}$. Set for every $t\in[0,T]$ and $N\in \mathbb{N}$
        \[
            \operatorname{I}_t^{x;N,\alpha}:=\int_t^Te^{-\int_t^s(\beta_u+N\alpha_u)du}\big(r(X_s^x)+R(X_s^x)\, N\alpha_s \big)ds+e^{-\int_t^T(\beta_u+N\alpha_u)du} R(X_T^x).
        \]
        Then it holds for every $t\in [0,T]$ that $Y_t^{x;N}=\esssup_{\alpha \in {\cal A}} {\cal E}_t^g[\operatorname{I}_t^{x;N,\alpha}]={\cal E}_t^g[\operatorname{I}_t^{x;N,\alpha^{*,x;N}}],$ $\mathbb{P}$-a.s.,~where $\alpha^{*,x;N}:=(\alpha^{*,x;N}_t)_{t\in[0,T]}\in {\cal A}$ is the optimizer given by 
		\begin{align}\label{eq:const_alpha_0}
			\alpha^{*,x;N}_t := {\bf 1}_{\{R(X_t^x)>Y_t^{x;N}\}}\quad \mbox{for $t\in[0,T]$}.
		\end{align}
	\end{proposition}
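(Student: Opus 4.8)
The plan is to exhibit the penalized generator $F^{x;N}$ as the upper envelope of a family of control-indexed affine generators, and then to combine the comparison theorem for BSDEs with the variational representation of the $g$-expectation in Remark~\ref{rem:represent_g_exp}. First I would rewrite the penalization term as a pointwise maximum over the binary control: since $N(R(X_t^x)-y)^+=\max_{a\in\{0,1\}}aN(R(X_t^x)-y)$, setting
\[
F_t^{x,a}(\omega,y,z):=r(X_t^x(\omega))+aN R(X_t^x(\omega))-(\beta_t(\omega)+aN)y+g(\omega,t,z)
\]
yields $F_t^{x;N}=\max_{a\in\{0,1\}}F_t^{x,a}$, with the maximizer equal to $1$ exactly on $\{R(X_t^x)>y\}$. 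For each $\alpha\in{\cal A}$ the generator $F^{x,\alpha}$ is Lipschitz in $(y,z)$ with inhomogeneous part in $\mathbb{L}^2(\mathbb{R})$ (by Assumptions~\ref{as:refer_semi}--\ref{as:reward}, the growth of $r,R$, and $\|X^x\|_{\mathbb{S}^2}<\infty$), so the BSDE with generator $F^{x,\alpha}$ and terminal datum $R(X_T^x)$ has a unique solution $(Y^{x;\alpha},Z^{x;\alpha})$ by \cite[Theorem~3.1]{pardoux1990adapted}; the same bounds show $\operatorname{I}_t^{x;N,\alpha}\in L^2({\cal F}_T;\mathbb{R})$ (as in Remark~\ref{rem:wellposed_stopping}), so that ${\cal E}_t^g[\operatorname{I}_t^{x;N,\alpha}]$ is well defined.

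\emph{The per-control representation (main obstacle).} The crux is to prove $Y_t^{x;\alpha}={\cal E}_t^g[\operatorname{I}_t^{x;N,\alpha}]$ for every fixed $\alpha\in{\cal A}$. I would multiply the $F^{x,\alpha}$-BSDE by the discount factor $\Gamma_{t,s}:=\exp(-\int_t^s(\beta_u+\alpha_u N)\,du)$ and integrate by parts on $[t,T]$; the affine-in-$y$ part cancels and the discounted running reward together with the terminal payoff assemble into $\operatorname{I}_t^{x;N,\alpha}$, giving
\[
Y_t^{x;\alpha}=\operatorname{I}_t^{x;N,\alpha}+\int_t^T\Gamma_{t,s}\,g(s,Z_s^{x;\alpha})\,ds-\int_t^T\Gamma_{t,s}Z_s^{x;\alpha}\,dB_s.
\]
Passing to $\mathbb{P}^\vartheta$ by Girsanov and using $g(s,z)-\langle z,\vartheta_s\rangle\le \hat g(s,\vartheta_s)$ (with equality for $\vartheta_s$ in the superdifferential of $g(s,\cdot)$ at $Z_s^{x;\alpha}$), this leads to $Y_t^{x;\alpha}=\essinf_{\vartheta\in{\cal B}^g}\mathbb{E}_t^{\mathbb{P}^\vartheta}[\operatorname{I}_t^{x;N,\alpha}+\int_t^T\Gamma_{t,s}\hat g(s,\vartheta_s)\,ds]$. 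Comparing with Remark~\ref{rem:represent_g_exp}, the only discrepancy from ${\cal E}_t^g[\operatorname{I}_t^{x;N,\alpha}]$ is the discount factor weighting the ambiguity penalty; I would remove it using that the representing family is a genuine set of measures dominated by $\mathbb{P}$ (equivalently $\hat g\in\{0,+\infty\}$), so that $\int_t^T\Gamma_{t,s}\hat g(s,\vartheta_s)\,ds$ and $\int_t^T\hat g(s,\vartheta_s)\,ds$ coincide on the effective domain. I expect this interaction between discounting and the nonlinearity of $g$ to be the delicate point of the argument.

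\emph{Comparison and attainment.} With the per-control identity in hand, the envelope inequality $F^{x;N}\ge F^{x,\alpha}$ and the comparison theorem for BSDEs give $Y_t^{x;N}\ge Y_t^{x;\alpha}={\cal E}_t^g[\operatorname{I}_t^{x;N,\alpha}]$ for every $\alpha\in{\cal A}$, hence $Y_t^{x;N}\ge\esssup_{\alpha\in{\cal A}}{\cal E}_t^g[\operatorname{I}_t^{x;N,\alpha}]$. For the reverse inequality and the optimality of $\alpha^{*,x;N}$, I would substitute $\alpha^{*,x;N}_t={\bf 1}_{\{R(X_t^x)>Y_t^{x;N}\}}$ into the identity of the first step to obtain $F_t^{x;N}(Y_t^{x;N},z)=F_t^{x,\alpha^{*,x;N}_t}(Y_t^{x;N},z)$ (both sides vanish on $\{R(X_t^x)\le Y_t^{x;N}\}$ and agree on its complement). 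Thus $(Y^{x;N},Z^{x;N})$ also solves the $F^{x,\alpha^{*,x;N}}$-BSDE, and uniqueness forces $Y^{x;N}=Y^{x;\alpha^{*,x;N}}$, whence $Y_t^{x;N}={\cal E}_t^g[\operatorname{I}_t^{x;N,\alpha^{*,x;N}}]\le\esssup_{\alpha\in{\cal A}}{\cal E}_t^g[\operatorname{I}_t^{x;N,\alpha}]$. Combining the two inequalities gives the claimed identity with the essential supremum attained at $\alpha^{*,x;N}$.
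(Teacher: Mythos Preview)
Your envelope decomposition $F_t^{x;N}=\max_{a\in\{0,1\}}F_t^{x,a}$, the comparison argument, and the attainment at $\alpha^{*,x;N}$ via uniqueness all match the paper's Steps~2--3 (which invoke \cite[Proposition~3.1 and Corollary~3.3]{el1997backward}). The structure is right.

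The gap is in your per-control representation. You correctly arrive at
\[
Y_t^{x;\alpha}=\operatorname{I}_t^{x;N,\alpha}+\int_t^T\Gamma_{t,s}\,g(s,Z_s^{x;\alpha})\,ds-\int_t^T\Gamma_{t,s}Z_s^{x;\alpha}\,dB_s,
\]
and then try to identify the right-hand side with ${\cal E}_t^g[\operatorname{I}_t^{x;N,\alpha}]$ through the variational formula of Remark~\ref{rem:represent_g_exp}. The discrepancy you spot---the discount weight on the penalty $\hat g$---is real, but your proposed fix is not valid in the paper's generality. The claim ``the representing family is a genuine set of measures dominated by $\mathbb{P}$, equivalently $\hat g\in\{0,+\infty\}$'' is false: domination by $\mathbb{P}$ is automatic in the $g$-expectation setup (all $\mathbb{P}^\vartheta$ are Girsanov shifts), whereas $\hat g\in\{0,+\infty\}$ is equivalent to $g(\omega,t,\cdot)$ being \emph{positively homogeneous}, which Definition~\ref{def:g_expect} does not assume. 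For instance $g(t,z)=1-\sqrt{1+|z|^2}$ is concave, $1$-Lipschitz, vanishes at $0$, yet $\hat g(t,\vartheta)=1-\sqrt{1-|\vartheta|^2}$ is strictly positive and finite for $0<|\vartheta|<1$. For such $g$, $\int_t^T\Gamma_{t,s}\hat g(s,\vartheta_s)\,ds$ and $\int_t^T\hat g(s,\vartheta_s)\,ds$ genuinely differ, and your argument does not close.

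The paper bypasses the variational representation entirely at this step. It takes ${\cal E}_t^g[\cdot]$ of both sides, uses \cite[Lemma~2.1]{coquet2002filtration} to extract the ${\cal F}_t$-measurable $Y_t^{x;\alpha}$, and is left with showing
\[
{\cal E}_t^g\Big[-\int_t^T\Gamma_{t,s}\,g(s,Z_s^{x;\alpha})\,ds+\int_t^T\Gamma_{t,s}Z_s^{x;\alpha}\,dB_s\Big]=0.
\]
This is done by sandwiching: from $-\kappa|z|\le -g(s,z)\le\kappa|z|$ (Definition~\ref{def:g_expect}(ii),(iii)) and monotonicity of ${\cal E}_t^g$, together with the domination ${\cal E}^{-\kappa}\le{\cal E}^g\le{\cal E}^\kappa$ (\cite[Lemma~4.4]{coquet2002filtration}), the term is trapped between ${\cal E}_t^{-\kappa}[\ldots]$ and ${\cal E}_t^{\kappa}[\ldots]$, both of which vanish by the ${\cal E}^{\pm\kappa}$-martingale characterization \cite[Lemma~5.5]{coquet2002filtration} applied to the discounted integrand $\Gamma_{t,\cdot}Z_\cdot^{x;\alpha}\in\mathbb{L}^2(\mathbb{R}^d)$. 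This argument uses only the Lipschitz bound on $g$ and works for every driver in Definition~\ref{def:g_expect}, with no homogeneity assumption.
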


    \section{Exploratory framework: approximation of optimal stopping under ambiguity}\label{subsec:robust_exploratory}
	In the previous section, we show that 
	for sufficiently large~$N\in \mathbb{N}$, the optimal stopping problem $V^x(=Y^x)$ under ambiguity in \eqref{eq:robust_stopping} can be approximated by the robust stochastic control problem $Y^{x;N}$ under ambiguity. 
    
    In this section, we introduce an exploratory version of $Y^{x;N}$, following  \cite{wang2020reinforcement,wang2020continuous}. In particular, we aim to study a robust analogue of the optimal exploratory stopping framework in \cite{dai2024learning}. To that end, let $\Pi$ be the set of all $\mathbb{F}$-progressively measurable processes $\pi=(\pi_t)_{t\in[0,T]}$ taking values in $[0,1]$, i.e., an exploratory version of the $ \{0,1\}$-valued controls set ${\cal A}$ appearing in Proposition \ref{pro:verification_control}.
	
    Then we define ${\cal H}:[0,1]\ni a \mapsto {\cal H}(a)\in \mathbb{R}$ by
    \begin{align}\label{eq:entropy}
    {\cal H}(a):= a \log(a)+(1-a)\log(1-a),
    \end{align}
    with the convention that ${\cal H}(0):=\lim_{a\downarrow 0}{\cal H}(a)=0$ and ${\cal H}(1):=\lim_{a\uparrow 1}{\cal H}(a)=0$.
    Typically, $-{\cal H}(a)$ is called the binary differential entropy. \color{black}
    
    Finally, let $\lambda>  0$ denote the temperature parameter reflecting the trade-off between exploration and exploitation.
	
We can then describe the robust exploratory control problem 
    under ambiguity: 
	for any $N\in \mathbb{N}$ and~$\lambda> 0$:
	\begin{align}\label{eq:robust_explore_problem}
	\overline V_t^{x;N,\lambda}:=\esssup_{\pi \in \Pi} {\cal E}_t^g[\overline{\operatorname{J}}_t^{x;N,\lambda,\pi}],\quad \mbox{for $t\in[0,T]$},
	\end{align}
	where for each $\pi \in \Pi$, the integrand $\overline{\operatorname{J}}_t^{x;N,\lambda,\pi}$ is given by
	\begin{align*}
		\overline{\operatorname{J}}_t^{x;N,\lambda,\pi}:=&\int_t^Te^{-\int_t^s(\beta_u+N\pi_u)du}\big(r(X_s^x)+R(X_s^x)\, N\pi_s -\lambda {\cal H}(\pi_s) \big)ds\\
        &\quad+e^{-\int_t^T(\beta_u+N\pi_u)du} R(X_T^x),
	\end{align*}
	where $X^x$ is given in \eqref{eq:refer_semi} and $(\beta_t)_{t\in[0,T]}$ is the discount rate appearing in\;\eqref{eq:robust_stopping}. 
	
	\begin{remark}\label{rem:entropy}
		We note that ${\cal H}$ in \eqref{eq:entropy} is strictly convex and bounded on~$[0,1]$. Moreover, since every $\pi\in \Pi$ is uniformly bounded by~$[0,1]$, by the same arguments presented for Remark \ref{rem:wellposed_stopping}, we have that $\overline{\operatorname{J}}_t^{x;N,\lambda,\pi}\in L^2({\cal F}_T;\mathbb{R})$ for all $N\in\mathbb{N}$, $\lambda> 0$, and $\pi\in \Pi$. Therefore, $\overline V^{x;N,\lambda}$ given in \eqref{eq:robust_explore_problem} is well-defined for all $N\in\mathbb{N}$ and $\lambda> 0$.
	\end{remark}
    \begin{remark}\label{rem:bernoulli}
	Assume that the probability space $(\Omega,{\cal F},\mathbb{P})$ supports a uniformly distributed random variable $U$ with values in $[0,1]$ which is independent of the fixed Brownian motion $B$. Then we are able to see that every exploratory control $\pi\in \Pi$ generates a Bernoulli-distributed (randomized) process under {\it drift ambiguity}. Indeed, we recall the variational characterization of $g$-expectation in Remark~\ref{rem:represent_g_exp} with the map $\hat g:\Omega \times[0,T]\times \mathbb{R}^d\to \mathbb{R}$ and the set ${\cal B}^g$. Then, for all $N\in\mathbb{N}$, $\lambda> 0$, and $t\in[0,T]$, we can rewrite the conditional $g$-expectation value ${\cal E}_t^g[\overline{\operatorname{J}}_t^{x;N,\lambda,\pi}]$ given in~\eqref{eq:robust_explore_problem} as the following {\it strong formulation} for drift ambiguity under $\mathbb{P}$ (see \cite[Section~5]{bartl2024numerical}):
        \begin{align}\label{eq:strong_repre}
            {\cal E}_t^g[\overline{\operatorname{J}}_t^{x;N,\lambda,\pi}]=\essinf_{\vartheta\in {\cal B}^g}\mathbb{E}_t\bigg[\overline{\operatorname{J}}_t^{x;N,\lambda,\pi,\vartheta}+\int_t^T \hat g(s, \vartheta_s)ds\bigg],
        \end{align}
        where 
        for each $\pi\in \Pi$ and $\vartheta\in {\cal B}^g$, the term $\overline{\operatorname{J}}_t^{x;N,\lambda,\pi,\vartheta}$ is given by 
        \begin{align*}
		\overline{\operatorname{J}}_t^{x;N,\lambda,\pi,\vartheta}:=&\int_t^Te^{-\int_t^s(\beta_u+N\pi_u)du}\big(r(X_s^{x;\vartheta})+R(X_s^{x;\vartheta})\, N\pi_s -\lambda {\cal H}(\pi_s) \big)ds\\
        &\quad+e^{-\int_t^T(\beta_u+N\pi_u)du} R(X_T^{x;\vartheta}),
	\end{align*}
        where $(X^{x;\vartheta}_t)_{t\in[0,T]}$ is 
        given~by $X_t^{x;\vartheta}:=
	x+ \int_0^t\big(b^o_s+ \sigma_s^o \vartheta_s \big) ds +\int_0^t \sigma^o_sdB_s,$ for $t\in[0,T]$, 
        and $(b^o,\sigma^o)$ are the baseline parameters appearing in \eqref{eq:refer_semi}. 

    Then by using the random variable $U$ and its independence with the filtration $\mathbb{F}$ generated by $B$, we can apply the Blackwell--Dubins lemma (see \cite{blackwell1983extension}) to ensure that there exists a randomized process $(\widetilde \alpha_t)_{t\in[0,T]}$ such that for every $t\in[0,T]$, $\mathbb{P}$-a.s.,
    \[
    \mathbb{P}(\widetilde\alpha_t=1\,|\,{\cal F}_t) = \pi_t =1 - \mathbb{P}(\widetilde\alpha_t=0\,|\,{\cal F}_t),
    \]
    i.e., $\widetilde \alpha_t$ is  a Bernoulli distributed random variable with probability $\pi_t$ given ${\cal F}_t$.
    \end{remark}
    
    In order to characterize $\overline V^{x;N,\lambda}$ given in \eqref{eq:robust_explore_problem}, we first collect several preliminary results concerning the following auxiliary BSDE formulations: Recall that $F^x$ is given in~\eqref{eq:benchmark_gen}. Denote for every $\pi \in \Pi$ and $(\omega,t,y,z)\in \Omega \times[0,T]\times \mathbb{R}\times \mathbb{R}^d$ by
		\begin{align}\label{eq:control_gen}
		\overline F^{x;N,\lambda,\pi}_t(\omega,y,z):=F^{x}_t(\omega,y,z)+N(R\big(X_t^x(\omega)\big)-y)\pi_t(\omega)-\lambda {\cal H}(\pi_t(\omega)). 
		\end{align}
    Then, consider the controlled processes $(\overline Y^{x;N,\lambda,\pi},\overline Z^{x;N,\lambda,\pi})$ satisfying 
	\begin{align}\label{eq:g_BSDE_explore}
		\overline Y_t^{x;N,\lambda,\pi}=R (X_T^x) + \int_t^T
		\overline F_s^{x;N,\lambda,\pi}(\overline Y_s^{x;N,\lambda,\pi},\overline Z_s^{x;N,\lambda,\pi})
		ds-\int_t^T \overline Z_s^{x;N,\lambda,\pi} dB_s,
	\end{align}

        \begin{remark}\label{rem:explore_control_BSDE}
            Under Assumptions \ref{as:refer_semi} and \ref{as:reward}, the following statements hold for all $\pi\in\Pi$, $N\in \mathbb{N}$ and $\lambda>0$:
            \begin{itemize}
                \item [(i)] Since $(\pi_t)_{t\in[0,T]}\in \Pi$ and $({\cal H}(\pi_t))_{t\in[0,T]}$ are uniformly bounded (see Remark\;\ref{rem:entropy}), we are able to see that the parameters of \eqref{eq:g_BSDE_explore} satisfy all the conditions given in \cite[Section 3]{pardoux1990adapted}. Therefore, there exists a unique solution $(\overline Y^{x;N,\lambda,\pi},\overline Z^{x;N,\lambda,\pi})\in \mathbb{S}^2(\mathbb{R})\times \mathbb{L}^2(\mathbb{R}^d)$ to \eqref{eq:g_BSDE_explore}. 
                \item [(ii)] Since $\overline Y_t^{x;N,\lambda,\pi}\in L^2({\cal F}_t;\mathbb{R})$ and $\overline{\operatorname{J}}_t^{x;N,\lambda,\pi}\in L^2({\cal F}_T;\mathbb{R})$ (see Remark~\ref{rem:entropy}), we can use the same arguments presented for Proposition \ref{pro:verification_control} to have that 
	\begin{align}\label{eq:g_exp_explore}
			\overline Y_t^{x;N,\lambda,\pi}={\cal E}_t^g[\overline{\operatorname{J}}_t^{x;N,\lambda,\pi}],\quad \mbox{$\mathbb{P}$-a.s. for all $t\in[0,T]$}.
	\end{align}
            \end{itemize}
        \end{remark}
    Moreover, define 
    for every $N\in \mathbb{N}$, $\lambda> 0$, and $(\omega,t,y,z)\in\Omega\times [0,T]\times \mathbb{R}\times \mathbb{R}^d$ by
    \begin{align}\label{eq:optimal_explore_gen}
			\begin{aligned}
				&\overline F_t^{x;N,\lambda}(\omega,y,z):=
				F_t^x(\omega,y,z)+ G_t^{x;N,\lambda}(\omega,y),\\
				&\;\;\mbox{where}\;\;G_t^{x;N,\lambda}(\omega,y):=N\Big(R\big(X_t^x(\omega)\big)-y\Big) +\lambda \log\Big(e^{-\frac{N}{\lambda}\{R(X_t^x(\omega))-y\}}+1\Big).
			\end{aligned}
    \end{align}
    Then consider  the couple of processes $(\overline Y^{x;N,\lambda},\overline Z^{x;N,\lambda})$ satisfying 
	\begin{align}\label{eq:g_BSDE_explore_optimal}
			\overline Y_t^{x;N,\lambda}=&R (X_T^x) + \int_t^T \overline F_s^{x;N,\lambda}(\overline Y_s^{x;N,\lambda},\overline Z_s^{x;N,\lambda})ds-\int_t^T \overline Z_s^{x;N,\lambda} dB_s.
	\end{align}
    
    The following theorem shows that the robust exploratory control problem~$\overline V^{x;N,\lambda}$ is characterized by the BSDE~\eqref{eq:g_BSDE_explore_optimal}. The proof is presented in Section \ref{proof:thm:verification}.
	\begin{theorem}\label{thm:verification}
		Suppose that Assumptions \ref{as:refer_semi} and \ref{as:reward} hold. Recall the logistic function $\operatorname{logit}(\cdot)$ in \eqref{eq:intro_optimal_explore}. 
        The following statements hold for every $N\in\mathbb{N}$ and $\lambda> 0$.
        \begin{itemize}
            \item [(i)] There exists a unique solution $(\overline Y^{x;N,\lambda},\overline Z^{x;N,\lambda})\in \mathbb{S}^2(\mathbb{R})\times \mathbb{L}^2(\mathbb{R}^d)$ of~\eqref{eq:g_BSDE_explore_optimal}.
            \item [(ii)] Moreover, recall $\overline V^{x;N,\lambda}$ is given in \eqref{eq:robust_explore_problem}. Then it holds  for every $t\in[0,T]$ that $\overline Y_t^{x;N,\lambda}=\overline V_t^{x;N,\lambda}={\cal E}_t^g[\overline{\operatorname{J}}_t^{x;N,\lambda,\pi^{*,x;N,\lambda}}]$ $\mathbb{P}$-a.s., where $\pi^{*,x;N,\lambda}\in \Pi$ is given~by
		\begin{align}\label{eq:robust_control}
			\pi^{*,x;N,\lambda}_t:= \operatorname{logit}\Big(\frac{N}{\lambda}(R(X_t^x)-\overline Y_t^{x;N,\lambda})\Big)
            ,\quad t\in[0,T].
		\end{align}
        \end{itemize}
	\end{theorem}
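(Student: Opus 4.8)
The plan is to prove Theorem~\ref{thm:verification} in two stages, first establishing well-posedness of the auxiliary BSDE~\eqref{eq:g_BSDE_explore_optimal}, then identifying its solution with the value process $\overline V^{x;N,\lambda}$ via a pointwise optimization of the driver. For part~(i), I would check that the driver $\overline F^{x;N,\lambda}$ in~\eqref{eq:optimal_explore_gen} satisfies the standard assumptions of \cite[Section~3]{pardoux1990adapted}, exactly as was done in Remark~\ref{rem:explore_control_BSDE}(i) and Remark~\ref{rem:penalized_BSDE}(i). The only new ingredient is the term $G_t^{x;N,\lambda}(\omega,y)$: I would verify it is Lipschitz in $y$ uniformly in $(\omega,t)$, which follows since $y\mapsto\lambda\log(e^{-\frac N\lambda(R(X_t^x)-y)}+1)$ has derivative $N\,\operatorname{logit}(\frac N\lambda(R(X_t^x)-y))\in[0,N]$, so $G$ is Lipschitz with constant at most $2N$, and combined with the Lipschitz driver $g$ from Definition~\ref{def:g_expect}(ii) and the linear term $-\beta_t y$, the whole driver is Lipschitz. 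Integrability of $F_t^{x;N,\lambda}(\omega,0,0)$ follows from Assumption~\ref{as:reward} and $\|X^x\|_{\mathbb S^p}<\infty$ (Remark~\ref{rem:refer_semi}(i)), so existence and uniqueness in $\mathbb S^2(\mathbb R)\times\mathbb L^2(\mathbb R^d)$ hold.

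For part~(ii) the core identity is that the driver $\overline F^{x;N,\lambda}$ arises as the pointwise supremum over $\pi\in[0,1]$ of the controlled drivers $\overline F^{x;N,\lambda,\pi}$ from~\eqref{eq:control_gen}, with the supremum attained at the logistic value~\eqref{eq:robust_control}. Concretely, for fixed $(\omega,t,y,z)$ I would maximize $\pi\mapsto N(R(X_t^x)-y)\pi-\lambda{\cal H}(\pi)$ over $\pi\in[0,1]$; since ${\cal H}$ is strictly convex (Remark~\ref{rem:entropy}), the objective is strictly concave, and setting the derivative $N(R(X_t^x)-y)-\lambda(\log\pi-\log(1-\pi))$ to zero yields the interior maximizer $\pi^*=\operatorname{logit}(\frac N\lambda(R(X_t^x)-y))$, with maximal value precisely $G_t^{x;N,\lambda}(\omega,y)$. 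This is a routine Legendre-type computation verifying that $-\lambda{\cal H}$ has convex conjugate $\lambda\log(e^{\cdot/\lambda}+1)$, and it gives the algebraic backbone linking~\eqref{eq:g_BSDE_explore_optimal} to the family~\eqref{eq:g_BSDE_explore}.

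With this pointwise maximization in hand, I would run a comparison-theorem argument in two directions. For the upper bound, fix any $\pi\in\Pi$: since $\overline F^{x;N,\lambda}_t(y,z)\ge\overline F^{x;N,\lambda,\pi}_t(y,z)$ for all $(y,z)$ by the pointwise supremum, the comparison theorem for BSDEs (e.g.\ \cite{el1997backward}, applicable because the driver difference is monotone in $y$ and both drivers share the same $z$-dependence up to the Lipschitz term $g$) gives $\overline Y_t^{x;N,\lambda}\ge\overline Y_t^{x;N,\lambda,\pi}={\cal E}_t^g[\overline{\operatorname J}_t^{x;N,\lambda,\pi}]$, where the last equality is~\eqref{eq:g_exp_explore}. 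Taking the essential supremum over $\pi$ yields $\overline Y_t^{x;N,\lambda}\ge\overline V_t^{x;N,\lambda}$. For the reverse inequality, I would insert the candidate optimizer $\pi^{*,x;N,\lambda}$ from~\eqref{eq:robust_control}: by construction it attains the supremum in the driver, so $\overline F^{x;N,\lambda}_t(\overline Y_t^{x;N,\lambda},\overline Z_t^{x;N,\lambda})=\overline F^{x;N,\lambda,\pi^{*}}_t(\overline Y_t^{x;N,\lambda},\overline Z_t^{x;N,\lambda})$, meaning $(\overline Y^{x;N,\lambda},\overline Z^{x;N,\lambda})$ also solves the controlled BSDE~\eqref{eq:g_BSDE_explore} with control $\pi^*$; by uniqueness of that BSDE (Remark~\ref{rem:explore_control_BSDE}(i)) it coincides with $(\overline Y^{x;N,\lambda,\pi^*},\overline Z^{x;N,\lambda,\pi^*})$, whence $\overline Y_t^{x;N,\lambda}={\cal E}_t^g[\overline{\operatorname J}_t^{x;N,\lambda,\pi^*}]\le\overline V_t^{x;N,\lambda}$. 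Combining the two bounds gives equality and identifies $\pi^*$ as optimal.

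The main obstacle I anticipate is the admissibility of the candidate control: I must confirm that $\pi^{*,x;N,\lambda}$ as defined in~\eqref{eq:robust_control} genuinely lies in $\Pi$, i.e.\ is $\mathbb F$-progressively measurable with values in $[0,1]$. Values in $[0,1]$ are automatic from the logistic function, but progressive measurability requires that $\overline Y^{x;N,\lambda}$ be progressively measurable (true, as the first component of a BSDE solution in $\mathbb S^2$) and that $R(X^x)$ be adapted (true by continuity of $R$ and adaptedness of $X^x$); composing with the continuous map $\operatorname{logit}$ preserves this. A secondary technical point is ensuring the comparison theorem applies despite the $g$-expectation nonlinearity in $z$: since $g$ is common to both drivers and Lipschitz, the difference of drivers depends only on $y$ and is nonnegative, so the standard comparison result for Lipschitz BSDEs suffices and no refinement of the nonlinear-expectation machinery is needed.
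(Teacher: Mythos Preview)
Your proposal is correct and follows essentially the same approach as the paper: for part~(i) you verify the Lipschitz and integrability conditions on $\overline F^{x;N,\lambda}$ (with the same $2N$-Lipschitz bound on $G$) to invoke \cite{pardoux1990adapted}, and for part~(ii) you identify $\overline F^{x;N,\lambda}$ as the pointwise maximum over $a\in[0,1]$ of the controlled drivers, verify admissibility of $\pi^{*,x;N,\lambda}$, and conclude by comparison and uniqueness. The only cosmetic difference is that the paper packages the two-sided comparison into a single citation of \cite[Proposition~3.1 and Corollary~3.3]{el1997backward}, whereas you spell out the upper bound via the comparison theorem and the lower bound via uniqueness of the controlled BSDE; these are the same argument.
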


    The following theorem establishes the comparison and stability results between the exploratory and non-exploratory robust control problems characterized in Proposition \ref{pro:verification_control} and Theorem \ref{thm:verification}. The proof is presented in Section \ref{proof:thm:verification}.
	\begin{theorem}\label{thm:stability}
		Suppose that Assumptions \ref{as:refer_semi} and \ref{as:reward} hold. For each $N\in \mathbb{N}$ and $\lambda>0$, let $(Y^{x;N},Z^{x;N})$ and $(\overline Y^{x;N,\lambda},\overline Z^{x;N,\lambda})$ be the unique solution to the BSDEs \eqref{eq:g_BSDE} and \eqref{eq:g_BSDE_explore_optimal}, respectively. Then it holds that for every $N\in \mathbb{N}$ and $\lambda>0$, 
		\begin{align}\label{eq:compare}
		Y_t^{x;N} \leq \overline{Y}_t^{x;N,\lambda},\quad \mbox{$\mathbb{P}$-a.s., for all $t\geq 0$, }
		\end{align}
		In particular, there exists some constant $C>0$ (that does not depend on $N\in \mathbb{N}$ and $\lambda>0$ but on $T>0$) such that for every $N\in \mathbb{N}$ and $\lambda>0$,
		\begin{align}\label{eq:stability}
			\|Y^{x;N}-\overline{Y}^{x;N,\lambda} \|_{\mathbb{S}^2}+\|Z^{x;N}-\overline{Z}^{x;N,\lambda} \|_{\mathbb{L}^2} \leq C \lambda,
		\end{align}
		This implies that for any $N\in \mathbb{N}$, $\overline{Y}^{x;N,\lambda}$ strongly converges to $Y^{x;N}$ in $\mathbb{S}^2(\mathbb{R})$,~as~$\lambda \downarrow 0$. 
	\end{theorem}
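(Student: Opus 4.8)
The plan is to reduce both assertions to a single algebraic comparison of the two drivers, followed by a carefully arranged a priori estimate for the difference of the two BSDEs.

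First I would observe that the penalization term in $F^{x;N}$ (see \eqref{eq:benchmark_gen_N}) and the term $G^{x;N,\lambda}$ in $\overline F^{x;N,\lambda}$ (see \eqref{eq:optimal_explore_gen}) differ only by a soft-plus smoothing. Setting $s:=N(R(X_t^x(\omega))-y)$, a direct computation gives $G_t^{x;N,\lambda}(\omega,y)=\lambda\log(1+e^{s/\lambda})$, whence
\[
0\le G_t^{x;N,\lambda}(\omega,y)-N\big(R(X_t^x(\omega))-y\big)^+\le \lambda\log 2
\]
for every $y\in\mathbb{R}$, and the bound is \emph{uniform in} $N$. In particular $\overline F^{x;N,\lambda}\ge F^{x;N}$ pointwise; the two drivers share the terminal value $R(X_T^x)$ and are Lipschitz in $(y,z)$ (the $z$-Lipschitz constant being the constant $\kappa$ from $g$). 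The comparison theorem for Lipschitz BSDEs then yields \eqref{eq:compare} at once.

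For \eqref{eq:stability} I would write $\delta Y:=\overline Y^{x;N,\lambda}-Y^{x;N}\ge 0$ and $\delta Z:=\overline Z^{x;N,\lambda}-Z^{x;N}$, abbreviate $\overline F:=\overline F^{x;N,\lambda}$ and $F:=F^{x;N}$ (suppressing the superscripts on the solutions), and apply It\^o's formula to $|\delta Y_t|^2$. The driver difference splits as
\[
\overline F(\overline Y,\overline Z)-F(Y,Z)
=\underbrace{[\overline F(\overline Y,\overline Z)-\overline F(Y,\overline Z)]}_{\mathrm{(I)}}
+\underbrace{[\overline F(Y,\overline Z)-\overline F(Y,Z)]}_{\mathrm{(II)}}
+\underbrace{[\overline F(Y,Z)-F(Y,Z)]}_{\mathrm{(III)}}.
\]
The decisive point is that $\overline F(\cdot,z)$ is nonincreasing in $y$ (both $-\beta_t y$ and $G^{x;N,\lambda}_t(\cdot,y)$ are), so $\delta Y\cdot\mathrm{(I)}\le 0$ and this whole $N$-dependent term is discarded; term $\mathrm{(II)}$ depends on $z$ only through $g$ and is bounded by $\kappa|\delta Z|$; and term $\mathrm{(III)}$ is exactly the soft-plus gap, bounded by $\lambda\log 2$. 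Taking expectations (the stochastic integral is a true martingale by $\mathbb{S}^2\times\mathbb{L}^2$ integrability) and absorbing the cross term $\kappa|\delta Y||\delta Z|$ into $\frac12|\delta Z|^2$ by Young's inequality gives
\[
\mathbb{E}\big[|\delta Y_t|^2\big]+\frac12\,\mathbb{E}\!\int_t^T|\delta Z_s|^2\,ds
\le C_\kappa\,\mathbb{E}\!\int_t^T|\delta Y_s|^2\,ds+T(\lambda\log 2)^2,
\]
with $C_\kappa$ depending only on $\kappa$. Gronwall's lemma yields $\sup_{t}\mathbb{E}[|\delta Y_t|^2]\le C\lambda^2$, reinsertion gives $\mathbb{E}\int_0^T|\delta Z_s|^2\,ds\le C\lambda^2$ with $C=C(T,\kappa)$, and a BDG estimate on the stochastic-integral term upgrades the former to $\mathbb{E}[\sup_t|\delta Y_t|^2]\le C\lambda^2$. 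Taking square roots gives \eqref{eq:stability}, and the $\mathbb{S}^2$-convergence as $\lambda\downarrow0$ is then immediate.

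The main obstacle is obtaining $C$ \emph{independent of} $N$: treating $N(R-y)^+$ and $G^{x;N,\lambda}$ na\"ively as Lipschitz in $y$ with constant of order $N$ would, through Gronwall, introduce a factor $e^{cN(T-t)}$ and ruin the bound as $N\to\infty$. The resolution is exactly the dissipativity exploited in term $\mathrm{(I)}$: since both drivers are nonincreasing in $y$ and $\delta Y\ge0$, the large $N$-dependent contribution carries the favorable sign and is dropped, leaving only the $N$-free Lipschitz constant $\kappa$ (in $z$) and the $N$-free gap $\lambda\log 2$ to drive the estimate. It then remains only to check that all constants produced by Young's inequality, Gronwall's lemma, and BDG depend solely on $T$ and $\kappa$.
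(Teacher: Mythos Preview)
Your proof is correct, and the comparison part is exactly what the paper does: compute the driver gap $\overline F^{x;N,\lambda}-F^{x;N}$, verify it is nonnegative via the soft-plus identity $G^{x;N,\lambda}_t(\omega,y)=\lambda\log(1+e^{s/\lambda})$ with $s=N(R(X_t^x)-y)$, and invoke the BSDE comparison theorem.

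For the stability estimate the two arguments diverge. The paper simply cites the a~priori bound \cite[Proposition~5.1]{el1997backward}, which gives $\|\delta Y\|_{\mathbb S^2}+\|\delta Z\|_{\mathbb L^2}\le C\,\mathbb E[\int_0^T|\Delta\overline F^{x;N,\lambda}_s(Y^{x;N}_s,Z^{x;N}_s)|^2ds]^{1/2}$, and then bounds the integrand pointwise by $\lambda\log 2$. You instead run the energy estimate by hand: It\^o on $|\delta Y|^2$, the three-term splitting, and the key observation that term $\mathrm{(I)}$ has the favorable sign because $\overline F^{x;N,\lambda}(\cdot,z)$ is nonincreasing in $y$ (note that this monotonicity already gives $\delta Y\cdot\mathrm{(I)}\le 0$ without needing $\delta Y\ge 0$). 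Both routes land on the same soft-plus gap $\lambda\log 2$, and both produce $C=C(T,\kappa)$.

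What your approach buys is a transparent justification of the $N$-independence of $C$. The black-box constant in \cite[Proposition~5.1]{el1997backward} in general depends on the full Lipschitz constant of the reference driver in $(y,z)$, which here is of order $N$; the paper asserts $C$ is independent of $N$ but does not make explicit why. Your dissipativity argument (monotonicity in $y$) shows precisely how the $O(N)$ Lipschitz piece drops out, leaving only the $z$-Lipschitz constant $\kappa$ of $g$ in the Gronwall exponent. In that sense your argument is more self-contained than the paper's on the one point the theorem singles out.
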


    Having completed the characterization of the robust exploratory control problem, we now introduce a stopping time ${\tau}^{x;N,\lambda}_t \in {\cal T}_t$ defined, for every $N\in\mathbb{N}$ and $\lambda>0$, by
     \begin{align}\label{eq:explore_stopping}
        \begin{aligned}
       {\tau}^{x;N,\lambda}_t :=&\inf \bigg\{s \ge t : \pi^{*,x;N,\lambda}_s\geq \frac{1}{1+2^{N(T-s)}}\bigg\} \wedge T\\
       =& \inf \{s \ge t : \overline{Y}_s^{x;N,\lambda} - \lambda (T-s) \log2 \le R(X_s^x)\} \wedge T\quad t\in[0,T],
       \end{aligned}
    \end{align}
    where the equality holds by definition of the standard logistic function $\operatorname{logit}(\cdot)$. 
    We refer to it as a {\it robust exploratory stopping time}. 
    
    The following theorem shows that the robust exploratory stopping time ${\tau}^{x;N,\lambda}_t$ in \eqref{eq:explore_stopping} converges to the optimal stopping time $\tau_t^{*,x}$ in \eqref{eq:robust_OST} along any sequences $(N_n,\lambda_n)$ with $N_n\uparrow \infty$ and $\lambda_n \downarrow 0$. Moreover, it establishes that the corresponding $g$-expectation value under the robust exploratory stopping time converges to the optimal stopping value $V_t^x(=Y_t^x)$ in \eqref{eq:robust_stopping}. The proof can be found in Section \ref{proof:thm:verification}.

    \begin{theorem}
    \label{thm:explore_stop_conv}
    Suppose that Assumptions \ref{as:refer_semi} and \ref{as:reward} hold. 
    Let \((N_n)_{n\in\mathbb N}\subset\mathbb N\) and \((\lambda_n)_{n\in\mathbb N}\subset(0,\infty)\) be sequences satisfying 
    \(
    N_n\uparrow\infty\) and \(\lambda_n\downarrow0
    \) 
    as $n\to \infty$. Then it holds for every \(t\in[0,T]\) that as $n\to \infty$, 
    \[
    \tau_t^{x;N_n,\lambda_n}\to \tau_t^{*,x}\quad \mbox{$\mathbb P$-a.s.}, \quad\mbox{and}\quad \big\|{\cal E}_t^g[\operatorname{I}_t^{x;\tau_t^{x;N_n,\lambda_n}}]-V_t^x  \big\|_{L^2}\to 0. 
    \]
    \end{theorem}
    \begin{remark}\label{rem:convergence_issue}
        We note that the exploration benefit $\overline Y^{x;N,\lambda}-Y^{x;N}$ compensates for the penalization error $Y^x- Y^{x;N}$ (see \eqref{eq:compare} and Remark \ref{rem:penalized_BSDE}\;(iii)), which does not lead to a direct comparison between $\overline Y^{x;N,\lambda}$ and $Y^x$. Consequently, it is not immediate whether a stopping time defined, for every $N\in\mathbb{N}$ and $\lambda>0$, by
        \begin{align*}
            \overline{\tau}^{x;N,\lambda}_t :=& \inf \bigg\{s \ge t : \pi^{*,x;N,\lambda}_s \ge \frac{1}{2} \bigg\} \wedge T= 
            \inf \{s \ge t : \overline{Y}_s^{x;N,\lambda}\le R(X_s^x)\} \wedge T\in {\cal T}_t, 
        \end{align*} 
        converges to $\tau^{*;x}_t$ in \eqref{eq:robust_OST}, even along sequences $(N_n,\lambda_n)$ with $N_n\uparrow \infty$ and $\lambda_n \downarrow 0$.

        To construct a stopping time that converges to $\tau^{*,x}_t$, we first establish the bound $\overline Y_t^{x;N,\lambda}-Y_t^x\leq \lambda (T-t) \log 2$. This allows us to consider the value $\overline Y_t^{x;N,\lambda}-\lambda (T-t) \log 2$ which is dominated by $Y^x$ and also converges to $Y^x$ along $(N_n,\lambda_n)$ with $N_n\uparrow \infty$ and $\lambda_n \downarrow 0$. Then, by establishing the convergence of an increasing sequence of hitting times of continuous processes (see Lemma~\ref{lem:general_tau_conv}), we obtain in Theorem~\ref{thm:explore_stop_conv} that the robust exploratory stopping time $\tau_t^{x;N,\lambda}$ in \eqref{eq:explore_stopping} converges to $\tau^{*,x}_t$. This result also applies to the non-robust (classical) exploratory stopping framework of \cite{dai2024learning}. 
    \end{remark}
    \color{black}

    \section{Policy iteration theorem \& RL algorithm}\label{sec:policy_iter}
   A typical RL approach to finding the optimal strategy is based on \textit{policy iteration}, where the strategy is successively refined through iterative updates. In this section, we establish the policy iteration theorem based on the verification result in Theorem~\ref{thm:verification}, and then provide the corresponding reinforcement learning algorithm.
    
    Throughout this section, we fix a sufficiently large $N\in \mathbb{N}$ and a small $\lambda>0$ so that $\overline Y^{x;N,\lambda}$ serves as an approximation of $Y^{x}$ (see Remark\;\ref{rem:penalized_BSDE} and Theorem~\ref{thm:stability}). 
        
    For any $\pi^n\in \Pi$ and $n\in \mathbb{N}$, denote by $(\overline Y^{x;N,\lambda,\pi^{n}},\overline Z^{x;N,\lambda,\pi^{n}})\in \mathbb{S}^2(\mathbb{R})\times \mathbb{L}^2(\mathbb{R}^d)$ the unique solution of \eqref{eq:g_BSDE_explore} under the exploratory control $\pi^n$ (see Remark\;\ref{rem:explore_control_BSDE}\;(i)).
    Recall the logistic function $\operatorname{logit}(\cdot)$ in \eqref{eq:intro_optimal_explore}. 
    Then one can construct $\pi^{n+1}\in \Pi$ as 
        \begin{align}\label{eq:robust_iter_control}
    			\pi^{n+1}_t:= \operatorname{logit}\bigg(\frac{N}{\lambda}(R(X_t^x)-\overline Y_t^{x;N,\lambda,\pi^n})\bigg)
                ,\quad t\in[0,T].
        \end{align}

        \begin{theorem}\label{thm:policy_improvement}
            Suppose that Assumptions \ref{as:refer_semi} and \ref{as:reward} hold. Let $\overline Y^{x;N,\lambda}$ be the first component of the unique solution  of~\eqref{eq:g_BSDE_explore_optimal} (see Theorem \ref{thm:verification}). Let $\pi^1\in \Pi$ be given. Let $(\overline Y^{x;N,\lambda,\pi^{1}},\overline Z^{x;N,\lambda,\pi^{1}})$ be the unique solution of \eqref{eq:g_BSDE_explore} under~$\pi^1$. For every~$n\in\mathbb{N}$, let $\pi^{n+1}$ be defined iteratively according to \eqref{eq:robust_iter_control} and let $(\overline Y^{x;N,\lambda,\pi^{n+1}},\overline Z^{x;N,\lambda,\pi^{n+1}})$ be the unique solution of~\eqref{eq:g_BSDE_explore} under $\pi^{n+1}$. Then the following hold for every $n\in \mathbb{N}$:
            \begin{itemize}
                \item [(i)] $ \overline Y_t^{x;N,\lambda} \ge \overline Y_t^{x;N,\lambda,\pi^{n+1}}\geq \overline Y_t^{x;N,\lambda,\pi^{n}}$, $t\in[0,T]$, $\mathbb{P}$-a.s.; 
                \item [(ii)] 
                Set $\Delta({x;N,\lambda,\pi^1}):= \| \overline Y^{x;N,\lambda}-\overline Y^{x;N,\lambda,\pi^{1}}\|_{\mathbb{S}^2}^2$. There exists some constant ${C}>0$ (that depends on $N,T,d$ but not on $n,\lambda$) such that
                \begin{align*}
                    &\|\overline Y^{x;N,\lambda}-\overline Y^{x;N,\lambda,\pi^{n+1}}\|_{\mathbb{S}^2}^2 + \|\overline Z^{x;N,\lambda}-\overline Z^{x;N,\lambda,\pi^{n+1}}\|_{\mathbb{L}^2}^2
                    \leq \frac{{C}^n}{n!} \Delta({x;N,\lambda,\pi^1}),\\
                    &\|  {\pi}^{n+1} -{\pi}^{*}\|_{\mathbb{S}^2}^2 \leq \frac{N}{\lambda} \frac{{C}^{n-1}}{(n-1)!} \Delta({x;N,\lambda,\pi^1}).
                \end{align*}
            \end{itemize}
            In particular, 
            for every $t \in [0,T]$, $\overline Y^{x;N,\lambda,\pi^{n}}_t \uparrow \overline Y^{x;N,\lambda}_t$ and 
            $\pi^{n}_t \downarrow \pi^{*}_t$ $\mathbb{P}$-a.s.,  as~$n \to \infty$.
        \end{theorem}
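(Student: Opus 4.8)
The plan is to lean throughout on one structural identity established in Theorem~\ref{thm:verification}: for every $(\omega,t,y)$ the logistic value
\[
G_t^{x;N,\lambda}(\omega,y)=\max_{a\in[0,1]}\big(N(R(X_t^x)-y)a-\lambda{\cal H}(a)\big)
\]
is attained at $a=\operatorname{logit}(\tfrac{N}{\lambda}(R(X_t^x)-y))$, so that $\overline F_t^{x;N,\lambda}(\omega,y,z)=\sup_{a\in[0,1]}\big(F_t^x(\omega,y,z)+N(R(X_t^x)-y)a-\lambda{\cal H}(a)\big)\ge \overline F_t^{x;N,\lambda,\pi}(\omega,y,z)$ for every $\pi\in\Pi$. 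Being a supremum of functions affine in $y$, the map $G_t^{x;N,\lambda}(\omega,\cdot)$ is convex, with $\partial_y G_t^{x;N,\lambda}(\omega,y)=-N\operatorname{logit}(\tfrac{N}{\lambda}(R(X_t^x)-y))\in[-N,0]$ and $0\le\partial_{yy}G_t^{x;N,\lambda}\le\tfrac{N^2}{4\lambda}$; in particular it is $N$-Lipschitz in $y$. I write $\overline Y^*:=\overline Y^{x;N,\lambda}$, $\overline Y^n:=\overline Y^{x;N,\lambda,\pi^n}$, and similarly for $\overline Z$.

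For (i), both inequalities follow from the comparison theorem for Lipschitz BSDEs, since the drivers in \eqref{eq:g_BSDE_explore} and \eqref{eq:g_BSDE_explore_optimal} are Lipschitz in $(y,z)$ and share the terminal value $R(X_T^x)$. The bound $\overline Y^*\ge\overline Y^{n+1}$ is immediate from the pointwise driver domination $\overline F^{x;N,\lambda}\ge\overline F^{x;N,\lambda,\pi^{n+1}}$ recorded above. For $\overline Y^{n+1}\ge\overline Y^n$ I would apply comparison to $(\overline Y^{n+1},\overline F^{x;N,\lambda,\pi^{n+1}})$ and $(\overline Y^n,\overline F^{x;N,\lambda,\pi^n})$: because $\pi^{n+1}_s=\operatorname{logit}(\tfrac{N}{\lambda}(R(X_s^x)-\overline Y^n_s))$ maximizes $a\mapsto N(R(X_s^x)-\overline Y^n_s)a-\lambda{\cal H}(a)$, one gets $\overline F_s^{x;N,\lambda,\pi^{n+1}}(\overline Y^n_s,\overline Z^n_s)\ge\overline F_s^{x;N,\lambda,\pi^n}(\overline Y^n_s,\overline Z^n_s)$, which is exactly the comparison hypothesis evaluated along the solution of the $\pi^n$-BSDE.

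The core of (ii) is to show that the error $e^{n+1}:=\overline Y^*-\overline Y^{n+1}\ge0$ solves a linear BSDE with the correct source. Subtracting \eqref{eq:g_BSDE_explore} (at $\pi^{n+1}$) from \eqref{eq:g_BSDE_explore_optimal} and observing that $y\mapsto N(R(X_s^x)-y)\pi^{n+1}_s-\lambda{\cal H}(\pi^{n+1}_s)$ is precisely the tangent line of $G_s^{x;N,\lambda}(\cdot)$ at $\overline Y^n_s$, I would rewrite the generator difference as
\[
-(\beta_s+N\pi^{n+1}_s)e^{n+1}_s+\langle\zeta_s,\overline Z^*_s-\overline Z^{n+1}_s\rangle+\Gamma_s,
\]
where $|\zeta_s|\le\kappa$ is the Lipschitz linearization of $g$ and $\Gamma_s:=G_s^{x;N,\lambda}(\overline Y^*_s)-G_s^{x;N,\lambda}(\overline Y^n_s)-\partial_yG_s^{x;N,\lambda}(\overline Y^n_s)(\overline Y^*_s-\overline Y^n_s)\ge0$ is the Bregman divergence of the convex map $G_s^{x;N,\lambda}$ between $\overline Y^*_s$ and $\overline Y^n_s$. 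The decisive step is the $\lambda$-uniform linear bound $0\le\Gamma_s\le 2N|e^n_s|$, which comes from $|\partial_y G^{x;N,\lambda}|\le N$. Applying It\^o's formula to $|e^{n+1}_s|^2$ (the terminal values cancel), discarding the nonpositive term $-2(\beta_s+N\pi^{n+1}_s)|e^{n+1}_s|^2$, and using Young's inequality on the $\zeta$- and $\Gamma$-contributions yields, after a backward Gronwall step, a recursion $\mathbb{E}[|e^{n+1}_t|^2]\le C_0\int_t^T\mathbb{E}[|e^n_s|^2]\,ds$ with $C_0$ depending on $N,\kappa,T$ but not on $\lambda$ or $n$; iterating produces the factor $C^n/n!$, and a Burkholder--Davis--Gundy argument upgrades this to the stated $\mathbb{S}^2$- and $\mathbb{L}^2$-estimates. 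The control estimate then follows from the $\tfrac14$-Lipschitz continuity of $\operatorname{logit}$, which gives $|\pi^{n+1}_s-\pi^*_s|\le\tfrac{N}{4\lambda}|e^n_s|$ and hence $\|\pi^{n+1}-\pi^*\|_{\mathbb{S}^2}^2\lesssim\tfrac{N^2}{\lambda^2}\|e^n\|_{\mathbb{S}^2}^2$; substituting the value estimate accounts for the explicit $\lambda$-dependence. Finally, monotonicity from (i) together with the summability of the rates gives the $\mathbb{P}$-a.s. monotone convergence of $\overline Y^n$, and hence of $\pi^n$ through the monotone logistic link.

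The main obstacle is isolating the source $\Gamma_s$ as a Bregman divergence and bounding it \emph{linearly} by $|e^n_s|$, uniformly in $\lambda$. The naive Hessian bound $\Gamma_s\le\tfrac{N^2}{8\lambda}|e^n_s|^2$ would inject a factor $1/\lambda$ into the value-function constant and destroy the claimed $\lambda$-independence of $C$; using instead $|\partial_y G^{x;N,\lambda}|\le N$ keeps $C$ free of $\lambda$, while the $1/\lambda$ growth is confined to the control estimate, where it enters only through the Lipschitz constant of the logistic link.
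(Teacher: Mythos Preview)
Your proof is correct and follows essentially the same route as the paper. For part~(i) the paper carries out the linearization of the $\pi^{n+1}$ and $\pi^n$ BSDEs by hand and represents $\Delta Y$ via an exponential-martingale formula, whereas you simply invoke the comparison theorem; the key observation---that $\pi^{n+1}$ is the pointwise maximizer at $\overline Y^n$, forcing $\overline F^{x;N,\lambda,\pi^{n+1}}(\overline Y^n,\overline Z^n)\ge\overline F^{x;N,\lambda,\pi^{n}}(\overline Y^n,\overline Z^n)$---is identical.

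For part~(ii) the two arguments compute the same quantity: your Bregman divergence $\Gamma_s$ is exactly the paper's $\Delta^{n+1}\overline F_s(\overline Y_s,\overline Z_s)=\overline F^{x;N,\lambda}_s(\overline Y_s,\overline Z_s)-\overline F^{x;N,\lambda,\pi^{n+1}}_s(\overline Y_s,\overline Z_s)$. The paper bounds it by $3N|e^n_s|$ through a direct manipulation of the log–exp expression, while your gradient bound $|\partial_yG^{x;N,\lambda}|\le N$ yields the slightly sharper $2N|e^n_s|$; both are uniform in~$\lambda$, which is the crucial point you correctly isolate. From there the paper quotes a black-box a~priori estimate (Zhang, Theorem~4.2.3) to pass directly to the $\mathbb{S}^2_t$ recursion, whereas you obtain the same recursion by the It\^o--Young--Gronwall--BDG route. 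Your Bregman-divergence framing makes the $\lambda$-independence of the source term conceptually transparent and explains why the naive Hessian bound would be the wrong move; the paper achieves the same outcome without naming the structure. For the control estimate, note that the paper uses the $1$-Lipschitz (rather than $1/4$-Lipschitz) constant of the logistic, which yields the stated $N/\lambda$ prefactor; your argument gives $N^2/\lambda^2$, which is what the Lipschitz bound actually produces.
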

        The proof of Theorem \ref{thm:policy_improvement} can be found in Section \ref{proof:thm:policy_improvement}.

        \begin{remark}
        \label{rem:policy_iteration_contribution}
        The policy iteration result in \cite[Theorem 4]{dai2024learning} builds on PDE arguments in a Markovian framework, whereas Theorem \ref{thm:policy_improvement} is based on BSDE arguments and shows convergence rates in a non-Markovian setting. In particular, \cite{dai2024learning} considers a bounded payoff and an initial iterate value function that is continuous and~bounded, whereas our result applies to payoffs without boundedness condition and does not require an initial value function; instead, the iteration is induced by any initial policy. 

        A BSDE-based policy iteration result for Dynkin games is studied in \cite[Theorem~3]{DaiDong2024-dynkin} under boundedness conditions on their BSDE paramters. While the iterative arguments using the comparison principle of BSDEs therein might be similar to ours, the presence of the generator $\overline Z^{x;N,\lambda,\pi^n}$ in our BSDE driver, induced by ambiguity, requires a more delicate analysis of a priori estimates and iterative arguments. In particular, our approach does not require boundedness of our BSDE parameters. 
        \end{remark} 

    \begin{remark}
            Let $N\in \mathbb{N}$ and $\lambda>0$ be given. Using $(\pi^{n})_{n\in\mathbb{N}}$ in \eqref{eq:robust_iter_control}, consider a sequence of stopping times $(\tau^{x;N,\lambda,n}_t)_{n\in \mathbb{N}}\subset {\cal T}_t$ defined, for every $n\in\mathbb{N}$, by 
            \begin{align*}
            \tau^{x;N,\lambda,n}_t
            :=&\inf\bigg\{s\ge t:\pi^{n+1}_s\geq \frac{1}{1+2^{N(T-s)}} \bigg\}\wedge T \\
            =&
            \inf\{s\ge t:\overline{Y}_s^{x;N,\lambda,\pi^n}-\lambda (T-s)\log 2\le R(X_s^x)\}\wedge T\quad t\in[0,T].
            \end{align*}
            From Theorem \ref{thm:policy_improvement} and definition of $\tau^{x;N,\lambda}_t$ in \eqref{eq:explore_stopping}, we can use the same arguments presented for Theorem \ref{thm:explore_stop_conv} 
            to have that for any $t\in[0,T]$, $\tau^{x;N,\lambda,n}_t\to\tau^{x;N,\lambda}_t$~$\mathbb{P}$-a.s., as $n\to \infty$. Moreover, for any $t\in[0,T]$, $\|{\cal E}_t^g[\operatorname{I}_t^{x;\tau_t^{x;N,\lambda,n}}]-{\cal E}_t^g[\operatorname{I}_t^{x;\tau_t^{x;N,\lambda}}]\|_{L^2}\to 0$~as~$n\to\infty$. 
        \end{remark}

        Let us mention some {Markovian} properties of the BSDEs arising in the policy iteration result given in Theorem \ref{thm:policy_improvement}, as well as how these properties can be leveraged to implement the policy iteration algorithm using {\it neural networks}.
        To that end, 
        in the remainder of this section, we consider the following specification:
        \begin{setting}\label{set:markov-BSDE}
            \begin{itemize}
                \item [(i)] The map $g$ given in Definition \ref{def:g_expect} is deterministic, i.e., for every $\omega^1,\omega^2\in \Omega$, $g(\omega^1,\cdot,\cdot)=g(\omega^2,\cdot,\cdot)$. 
                \item[(ii)] The baseline parameters $b^o$ and $\sigma^o$ appearing in \eqref{eq:refer_semi} are of the form given in Remark \ref{rem:suff_refer_semi}\;(ii), so that Assumption \ref{as:refer_semi} holds.
            \item[(iii)] The reward functions $R$ and $r$ satisfy all the conditions in Assumption~\ref{as:reward}\;(i). Furthermore, $r$ is continuous. Lastly, the discount rate process $(\beta_t)_{t\in[0,T]}$ is deterministic and bounded by the constant $C_\beta>0$ in Assumption~\ref{as:reward}\;(ii).
            \end{itemize}
        \end{setting}


        Denote by $\check\Pi$ the set of all Borel measurable maps 
        \(
            \check \pi:[0,T]\times \mathbb{R}^d\ni (t,\tilde x)\to \check \pi_t(\tilde x)\in[0,1],
        \)
        so that $\check \pi(X^x):=(\check{\pi}_t(X_t^x))_{t \in[0,T]}\in \Pi$, i.e., $\check\Pi$ is the closed loop policy~set.

        Under Setting \ref{set:markov-BSDE},
        set for every $\check \pi\in \check\Pi$ and $(t,\tilde x,y,z)\in[0,T]\times \mathbb{R}^d\times \mathbb{R}\times \mathbb{R}^d$,
        \begin{align}\label{eq:Markov_generator}
               \check F_t^{N,\lambda;\check \pi}(\tilde x,y,z) :=  r(\tilde x) - \beta_t y + g(t,z) + N(R(\tilde x)-y)\check\pi_t(\tilde x) - \lambda \mathcal{H}\big(\check\pi_t(\tilde x)\big),
        \end{align}
        so that $(\check F_t^{N,\lambda,\check \pi}(\cdot,\cdot,\cdot))_{t\in[0,T]}$ is deterministic and $\check F_\cdot^{N,\lambda,\check\pi}(\cdot,\cdot,\cdot)$ is Borel measurable.


        \begin{remark}\label{rem:optimal_visco}
            Under Setting~\ref{set:markov-BSDE}, recall $(\overline Y^{x;N,\lambda},\overline Z^{x;N,\lambda})$ satisfying \eqref{eq:g_BSDE_explore_optimal}; see also Theorem \ref{thm:verification}). Then set for every $(t,\tilde x,y,z)\in[0,T]\times \mathbb{R}^d\times \mathbb{R}\times \mathbb{R}^d$
            \begin{align*}
                \check{F}_t^{N,\lambda}(\tilde x,y,z):=r(\tilde x) - \beta_t y + g(t,z) + N(R(\tilde x)-y)+\lambda \log(e^{-\frac{N}{\lambda}\{R(\tilde x)-y\}}+1).
            \end{align*}
            Clearly, $\check{F}_t^{N,\lambda}(X_t^x,y,z)=\overline F^{x;N,\lambda}_t(y,z)$ for $(t,x,y,z)\in[0,T]\times \mathbb{R}^d\times \mathbb{R}\times \mathbb{R}^d$; see \eqref{eq:optimal_explore_gen}. Moreover, $\check{F}_\cdot^{N,\lambda}(\cdot,\cdot,\cdot)$ and $R(\cdot)$ satisfy the conditions (M1b) and ($\textrm{M1b}^c$) given in \cite{KHM}. Therefore, an application of \cite[Theorem~8.12]{KHM} ensures the existence of a viscosity solution\footnote{We refer to \cite[Definition 8.11]{KHM} for the definition of a viscosity solution of \eqref{eq:semi-linear-PDE} with setting the terminal condition $R\curvearrowright  \Psi$ and the generator $\check{F}^{N,\lambda}_\cdot \curvearrowright g$ therein.} $\check v^{N,\lambda}$ of the following PDE:,
            \begin{align}
            (\partial_tv+\mathcal{L} v)(t,x) + \check{F}^{N,\lambda}_t\big(x, v(t,x), ((\widetilde \sigma^o)^{\top}\nabla  v)(t,x)\big) =0,\;\;\; (t,x)\in [0,T)\times \mathbb{R}^d,\label{eq:semi-linear-PDE}
            \end{align}
            with $v(T,\cdot) = R(\cdot)$, where the infinitesimal operator $\mathcal{L}$ of $X^x$ under the measure~$\mathbb{P}$ is given by $\mathcal{L}v(t,x):= \frac{1}{2}\sum_{i,j=1}^d ((\widetilde \sigma^o)^{\top} \widetilde \sigma^o(t,x) )_{i,j} \frac{\partial^2v(t,x)}{\partial x_i \partial x_j} + \sum_{i=1}^d \widetilde b^o_i(t,x) \frac{\partial v(t,x)}{\partial x_i}$. In particular, it holds that $\overline Y_t^{x;N,\lambda}=\check v^{N,\lambda}(t,X_t^x)$, $\mathbb{P}\otimes dt$-a.e., for all $t\in[0,T]$.
        \end{remark}

        We now have a sequence of closed-loop policies in $\check \Pi$ deriving the policy iteration. 
        The proof is presented in Section \ref{proof:thm:policy_improvement}.
        \begin{corollary}\label{coro:Markovian_of_policy_iter_closed_loop}
            Under Setting~\ref{set:markov-BSDE}, let $\check\pi^{1}\in \check \Pi$ be given. 
            \begin{itemize}
                \item[(i)] There exists two sequences of Borel measurable functions $(v^{N,\lambda,n})_{n\in \mathbb{N}}$ and $(w^{N,\lambda,n})_{n\in \mathbb{N}}$ defined on $[0,T]\times \mathbb{R}^d$ (having values in $\mathbb{R}$ and $\mathbb{R}^d$, respectively) such that for every $n\in \mathbb{N}$ and every $t\in[0,T]$, $\mathbb{P}\otimes dt$-a.e.,
                \begin{align*}
                \overline Y_t^{x;N,\lambda,\check\pi^{n}(X^x)}=v^{N,\lambda,n}(t,X_t^{x}),\quad
                \overline Z_s^{x;N,\lambda,\check\pi^{n}(X^x)}= \big((\widetilde \sigma^o)^\top 
                w^{N,\lambda,n}\big) (t, X_t^{x}),
                \end{align*}
                with $\check \pi^{n}(X^x):=(\check \pi^{n}_t(X_t^x))_{t\in [0,T]}\in \Pi$, where for any $n\geq 2$, $\check \pi^{n}\in \check \Pi$ is defined iteratively as for $(t,\tilde x) \in [0,T]\times \mathbb{R}^d$
                \begin{align}\label{eq:robust_iter_control_deter}\check\pi^{n}_t(\tilde x):=\operatorname{logit}\Big(\frac{N}{\lambda}\big(R(\tilde x)-v^{N,\lambda,n-1}(t,\tilde x)\big)\Big).
                \end{align}
                \item[(ii)] If $ \check \pi_t^{1}(\cdot)$ is continuous on  $\mathbb{R}^d$ for any $t\in[0,T]$, one can find a sequence of functions $(v^{N,\lambda,n})_{n\in \mathbb{N}}$ which satisfies all the properties given in (i) and each $v^{N,\lambda,n}$, $n\in \mathbb{N}$, is a viscosity solution of the following PDE: 
                \[(\partial_t v+\mathcal{L} v)(t,x) + \check{F}^{N,\lambda,\check\pi ^n}_t(x, v(t,x), ((\widetilde \sigma^o)^{\top}\nabla  v)(t,x)) =0,\;\;(t,x)\in [0,T)\times \mathbb{R}^d,
                \]
                with $v(T,\cdot) = R(\cdot)$, where $\check\pi^n\in \check \Pi$ is defined iteratively as in \eqref{eq:robust_iter_control_deter}.
            \end{itemize}
          	\end{corollary}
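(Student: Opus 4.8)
The plan is to prove both parts by induction on $n$, exploiting the fact that, under Setting~\ref{set:markov-BSDE}, substituting a closed-loop policy $\check\pi\in\check\Pi$ into the controlled BSDE~\eqref{eq:g_BSDE_explore} turns it into a \emph{Markovian} BSDE coupled to the Markov diffusion $X^x$, and that the logistic update~\eqref{eq:robust_iter_control} preserves the closed-loop structure. The crucial observation is that, with $\pi=\check\pi(X^x)$, the generator of~\eqref{eq:g_BSDE_explore} coincides with $\check F^{N,\lambda,\check\pi}_t(X^x_t,y,z)$, which by~\eqref{eq:Markov_generator} is a deterministic Borel function of $(t,X^x_t,y,z)$, Lipschitz in $(y,z)$ (using $\check\pi_t\in[0,1]$, the $\kappa$-Lipschitz continuity of $g$ in $z$, and the boundedness of $\beta$) and square-integrable at the origin by the growth of $r,R$ together with $\|X^x\|_{\mathbb{S}^2}<\infty$. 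Thus $(X^x,\overline Y^{x;N,\lambda,\check\pi(X^x)},\overline Z^{x;N,\lambda,\check\pi(X^x)})$ forms a well-posed Markovian forward–backward system (cf.\ Remark~\ref{rem:explore_control_BSDE}(i)), which is precisely the setting to which the nonlinear Feynman–Kac machinery underlying Remark~\ref{rem:optimal_visco} and \cite{KHM} applies.

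For part (i), at step $n$ I would assume $\check\pi^n\in\check\Pi$ is already defined (for $n=1$ it is given; for $n\geq 2$ it is given by~\eqref{eq:robust_iter_control_deter}, which is Borel because $\operatorname{logit}$ and $R$ are continuous and $v^{N,\lambda,n-1}$ is Borel). The measurable nonlinear Feynman–Kac representation then yields Borel functions $v^{N,\lambda,n}\colon[0,T]\times\mathbb{R}^d\to\mathbb{R}$ and $w^{N,\lambda,n}\colon[0,T]\times\mathbb{R}^d\to\mathbb{R}^d$ with $\overline Y_t^{x;N,\lambda,\check\pi^n(X^x)}=v^{N,\lambda,n}(t,X_t^x)$ and $\overline Z_t^{x;N,\lambda,\check\pi^n(X^x)}=((\widetilde\sigma^o)^\top w^{N,\lambda,n})(t,X_t^x)$, $\mathbb{P}\otimes dt$-a.e.; here $v^{N,\lambda,n}(t,x)$ is identified with the time-$t$ value of the BSDE started from $X^{t,x}_t=x$, and the representation follows from the Markov/flow property of $X^x$ and uniqueness of BSDE solutions, while the product form of the $Z$-component is exactly the identification furnished by \cite{KHM}. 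The final consistency check is that these closed-loop iterates reproduce the open-loop iterates $\pi^n$ of Theorem~\ref{thm:policy_improvement}: substituting $\overline Y_t^{x;N,\lambda,\check\pi^n(X^x)}=v^{N,\lambda,n}(t,X_t^x)$ into~\eqref{eq:robust_iter_control} gives $\pi^{n+1}_t=\operatorname{logit}\big(\tfrac{N}{\lambda}(R(X_t^x)-v^{N,\lambda,n}(t,X_t^x))\big)=\check\pi^{n+1}_t(X_t^x)$, which closes the induction.

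For part (ii), I would strengthen the induction hypothesis by carrying continuity of the iterates. If $\check\pi^1_t(\cdot)$ is continuous, then $\check F^{N,\lambda,\check\pi^1}_\cdot(\cdot,\cdot,\cdot)$ and $R$ satisfy the conditions (M1b) and $(\textrm{M1b}^c)$ of \cite{KHM}, exactly as verified for the optimal generator in Remark~\ref{rem:optimal_visco}, so \cite[Theorem~8.12]{KHM} makes $v^{N,\lambda,1}$ a continuous viscosity solution of the stated semilinear PDE with generator $\check F^{N,\lambda,\check\pi^1}$ and terminal datum $R$. The inductive step then uses that continuity of $v^{N,\lambda,n-1}$, together with continuity of $\operatorname{logit}$ and $R$, renders $\check\pi^n_t(\cdot)$ continuous via~\eqref{eq:robust_iter_control_deter}; this restores the hypotheses of \cite[Theorem~8.12]{KHM} for the generator $\check F^{N,\lambda,\check\pi^n}$, producing $v^{N,\lambda,n}$ as a continuous viscosity solution and propagating continuity to the next step.

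The step I expect to be the main obstacle is ensuring that the Feynman–Kac value function $v^{N,\lambda,n}(t,x)=\overline Y_t^{t,x}$ is genuinely \emph{continuous} (not merely Borel) in $(t,x)$, since it is precisely this continuity that feeds the next policy $\check\pi^{n+1}$ back into the hypotheses of \cite{KHM}, so continuity must survive every iteration. Establishing it amounts to combining the standard $\mathbb{S}^2$-stability estimates for BSDEs under perturbation of the forward initial condition with the Lipschitz continuity of the flow $x\mapsto X^{t,x}$ (guaranteed by the Lipschitz/growth assumptions on $\widetilde b^o,\widetilde\sigma^o$ in Setting~\ref{set:markov-BSDE}(ii)) and the continuity and linear growth of the now-continuous generator $\check F^{N,\lambda,\check\pi^n}$; care is needed to keep the Lipschitz constants controlled uniformly so that continuity genuinely propagates through each iterate. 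The precise product form $(\widetilde\sigma^o)^\top w^{N,\lambda,n}$ of the $Z$-representation is the other delicate point, for which I would appeal directly to the identification in \cite{KHM} rather than re-deriving it through a gradient argument that would require smoothness of $v^{N,\lambda,n}$.
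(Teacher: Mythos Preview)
Your proposal is correct and follows essentially the same approach as the paper: the paper packages the single-step Feynman--Kac representation and viscosity-solution statement into a separate lemma (Lemma~\ref{lem:Markovian_under_closed_loop_policy}, proved via \cite[Theorems~8.9 and~8.12]{KHM}) and then obtains the corollary by iterating that lemma, which is exactly the induction you outline. Your identification of the ``main obstacle'' --- continuity of $v^{N,\lambda,n}$ in $(t,x)$ --- is precisely what the paper isolates in Lemma~\ref{lem:Markovian_under_closed_loop_policy}(ii), where local Lipschitz continuity in $x$ and H\"older continuity in $t$ are established via the $L^p$-stability estimates of \cite{pardoux2005backward}, in line with what you anticipate.
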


        \begin{remark}
        \label{rem:markovian_closure_PI}
        Corollary~\ref{coro:Markovian_of_policy_iter_closed_loop} is a closed-loop, Markovian counterpart to the open-loop policy iteration in Theorem~\ref{thm:policy_improvement}. In particular, measurability and continuity on the initial policy ensure that the induced sequence of policies admits feedback-form representations in \eqref{eq:robust_iter_control_deter}. As noted in Remark~\ref{rem:policy_iteration_contribution}, \cite[Theorem~4]{dai2024learning} establishes a PDE-based policy iteration result. However, our result does not follow from it, since we do not assume boundedness of the reward functions and the associated iterative PDEs lack Lipschitz regularity in source terms. Moreover, the presence of $\overline Z^{x;N,\lambda,\check \pi^n(X^x)}$, induced by ambiguity, requires additional analysis. Our result is therefore obtained via a direct probabilistic approach, 
        based on \cite[Theorem 8.12]{KHM}. 
        \end{remark}

        The core logic of the policy iteration given in 
        Theorem \ref{thm:policy_improvement} and Corollary \ref{coro:Markovian_of_policy_iter_closed_loop} consists of two steps at each iteration. The first is the {\it policy update}, given in \eqref{eq:robust_iter_control} or \eqref{eq:robust_iter_control_deter}. The second is the {\it policy evaluation}, which corresponds to derive either the solution $(\overline Y^{x;N,\lambda,\pi^{n}},\overline Z^{x;N,\lambda,\pi^{n}})$ of the BSDE \eqref{eq:g_BSDE_explore} under the updated policy $\pi^n$, or equivalently, the solution $v^{N,\lambda,n}$ of the PDE under $\check \pi^n$ as given in Corollary \ref{coro:Markovian_of_policy_iter_closed_loop}\;(ii). 
        
        In what follows, we develop an RL scheme, relying on the {deep splitting method} of Beck et al.~\cite{Becker-Jentzen-Neufeld-Deep-Splitting-21} and Frey and K\"ock~\cite{deep_splitting_convergence}, to implement the policy evaluation step at each iteration. For this purpose, we first introduce some notation, omitting the dependence on $(N,\lambda)$ (even though the objects still depend on them).

        \begin{setting}\label{set:discrete} Denote by $I\in \mathbb{N}$ the number of steps in the time discretization and denote by $\Theta \subset \mathbb{R}^p$  (with some $p\in \mathbb{N}$) the parameter spaces for neural networks in.  
        \begin{enumerate}
            \item [(i)] Let $t_i = i\Delta t$ and $\Delta B_i:= B_{t_{i+1}} - B_{t_i}$ for $i=\{0,\dots,I-1\}$ with $\Delta t:= T / I$. Then the Euler scheme of \eqref{eq:refer_semi} under Setting \ref{set:markov-BSDE}\;(ii) is given by: $\check{X}^x_{0} := x$,
         \begin{align*}
            \check{X}^x_{i+1}:= \check{X}^x_{i} + \widetilde{b}^o(t_i,\check{X}^x_{i})\Delta t + \widetilde{\sigma}^o(t_i,\check{X}^x_{i}) \Delta B_i, \quad i\in\{0,\ldots,I-1\}.
         \end{align*}
            \item [(ii)] The initial closed-loop policy $\check \pi^1$ is given by \(\check\pi^1_i(\cdot):=\operatorname{logit}(\frac{N}{\lambda}(R(\cdot) - v^{0}_{i}(\cdot)))\), $i\in\{0,\dots,I-1\}$, 
        with some function (at least continuous) $v^{0}_{i}:\mathbb{R}^d\to \mathbb{R}$.
            \item [(iii)] For each $n\in \mathbb{N}$ and $i\in\{0,\dots,I-1\}$, let $v_i^n(\,\cdot\,;\vartheta^n_i):\mathbb{R}^d\to \mathbb{R}$ 
            be {\it neural realizations} of $v^{N,\lambda , n}(t_i,\cdot)$ 
            parameterized by 
            $\vartheta^n_i\in \Theta$ (e.g., feed-forward networks (FNNs) with $C^1$-regularity or Lipschitz continuous with weak~derivative). 
            \item [(vi)] For each $n\in \mathbb{N}$, the time-discretized, $n+1$-th updated, closed-loop policy $\check \pi^{n+1}(\cdot;\vartheta_i^n)$ (that depends on the parameter $\vartheta^n_i$ appearing in (iii)) is given by 
            \(\check\pi^{n+1}_i(\cdot ;\vartheta_i^n):=\operatorname{logit}(\frac{N}{\lambda}(R(\cdot) - v^{n}_{i}(\cdot;\vartheta^n_i)))\), $i\in\{0,\dots,I-1\}.$
        \item [(v)] For each $n\in \mathbb{N}$, set for every $(\tilde x,y,z)\in\mathbb{R}^d\times \mathbb{R}\times \mathbb{R}^d$,
        \begin{align*}
                \begin{aligned}
               \check F_i^{n}(\tilde x,y,z;\vartheta_i^{n-1}) &:=  r(\tilde x) - \beta_{t_i} y + g(t_i,z) + N(R(\tilde x)-y)\check\pi_i^n(\tilde x;\vartheta_i^{n-1})\\
               &\quad - \lambda \mathcal{H}\big(\check\pi_i^n(\tilde x;\vartheta_i^{n-1})\big),
               \end{aligned}
        \end{align*}
        with the convention that $\check \pi^1(\cdot;\vartheta_i^{0})\equiv\check \pi_i^1(\cdot)$ for any $\vartheta_i^0\in \Theta$ (see (ii)) so that $\check F^1_i(\cdot,\cdot,\cdot)$ is not parametrized over $\Theta$ but depends only on the form $\check \pi_i^1$.
        \end{enumerate}
        \end{setting}
         To apply the {deep splitting method}, one needs $ \widetilde{\sigma}^o(t_i,\check{X}^x_{i}) $ in the loss function calculation (given in \eqref{eq:LSTD-no-martingale}), which is unknown to an RL agent before learning the environment but can be learned from from the realized quadratic covariance of observed data\footnote{The mapping $\mathbb{R}^{d\times d}\ni A\mapsto A^{\frac{1}{2}}\in \mathbb{R}^{d\times d}$ denotes the symmetric positive-definite square root of a positive semidefinite matrix $A$.} 
        \[
            \Sigma({\check{X}^x_{i:i+1}}):= \frac{1}{\sqrt{\Delta t} }\big((\check{X}^x_{i+1} - \check{X}^x_{i})(\check{X}^x_{i+1} - \check{X}^x_{i})^\top\big)^{\frac{1}{2}},  
        \]
        so that 
         %
         $\Sigma({\check{X}^x_{i:i+1}})\Sigma({\check{X}^x_{i:i+1}})^\top\Delta t \to \widetilde{\sigma}^o(t_i,\check{X}^x_{i})\widetilde{\sigma}^o(t_i,\check{X}^x_{i})^\top\Delta t$ as $ \Delta t \downarrow 0$  in probability~$\mathbb{P}$; see e.g., \cite[Chapter I, Theorem~4.47]{jacod2013limit} and \cite[Section~6, Theorem 22]{Protter2005Stochastic}. 

        \begin{algorithm}[t]
        \caption{Policy iteration algorithm}
        \label{alg:exact_policy_iteration}
        \begin{algorithmic}[1]
        \REQUIRE  Batch size $M\in \mathbb{N}$; Number of policy iterations $\overline n \in \mathbb{N}$; Number of epochs $\overline \ell \in \mathbb{N}$ for policy evaluation; Learning rate $\alpha\in (0,1)$.
        \STATE Set the initial closed loop policy $\check\pi^1_i(\cdot)$, $i\in\{0,\ldots,I-1\}$, as in Setting \ref{set:discrete}\;(ii).\\
        \STATE Initialize $\vartheta_i^{0,*}\in \Theta$, $i\in\{0,1,\dots,I\}$. 
        \FOR{$n = 1, \ldots, \bar n$}
          \STATE Initialize $\vartheta^n_{i}\in \Theta$,  $i\in\{0,\ldots,I-1\}$, and $\vartheta_I^{n,*}\in \Theta$. 
            \FOR{$l = 1, \ldots, \bar \ell$}
                \STATE Generate $M$ trajectories of $(\check{X}^{x}_{i})_{i=0}^{I}$; see Setting \ref{set:discrete}\;(i).
            \FOR{ $i=I-1,\ldots,0 $  } 
                \STATE Minimize \eqref{eq:LSTD-no-martingale} over \(\vartheta^n_i \in \Theta \) by using SGD with learning rate $\alpha $.
        \ENDFOR
        \ENDFOR
          \STATE Denote by $\vartheta^{n,*}_i$ the lastly updated parameters at $t_i$, $i\in\{0,\ldots,I-1\}$.
        \ENDFOR
        \end{algorithmic}
        \end{algorithm}

        With all this notation set in place, for each iteration $n\in \mathbb{N}$, we present the policy evaluation as a {\it backward iterative} minimization problem. For any $i\in \{0,\dots,I-1\}$, let $ \vartheta^{n-1,*}_i , \; i\in \{0,\ldots,I-1\} $ denote the optimal parameter at time $t_i$ from iteration $n-1$, and let \( \vartheta^{n,*}_{i+1}\) denote the optimal parameter at time $t_{i+1}$ in $n$-iteration. Then the optimal parameter at time $t_i$ in $n$-iteration is given by \color{black} 
        \begin{align}\label{eq:loss_minimizat}
             \vartheta^{n,*}_i  \in \argmin_{\vartheta^n_i\in \Theta} \mathfrak{L}^n(\vartheta^n_i;\vartheta_i^{n-1,*}, \vartheta_{i+1}^{n,*}),  
        \end{align}
        where $\mathfrak{L}_i^n(\cdot;\vartheta_i^{n-1,*},\vartheta_{i+1}^{n,*}): \Theta  \to \mathbb{R} $ is the parameterized $L^2 $-loss function given by
         \begin{align}
            & \mathfrak{L}^n(\vartheta^n_i  ; \vartheta_i^{n-1,*}, \vartheta_{i+1}^{n,*})
            := \mathbb{E}\Big[\big| v^{n}_{i+1}(\check{X}^x_{i+1};\vartheta^{n,*}_{i+1}) - v^{n}_{i}(\check{X}^x_{i};\vartheta^{n}_{ i }) \nonumber \\
             & \quad \quad   +\check{F}^{n}_i\big(\check{X}^x_{i+1},v^{n}_{i+1}(\check{X}^x_{i+1};\vartheta^{n,*}_{i+1}), \Sigma({\check{X}^x_{i:i+1}}) \nabla v^{n}_{i+1}(\check{X}^x_{i+1};\vartheta^{n,*}_{i+1});\vartheta_i^{n-1,*}\big)\Delta t  \big|^2\Big],\label{eq:LSTD-no-martingale}
         \end{align}
         with the convention that ${v}^n_{I}(\check X_{I}^x;\vartheta_I^{n,*}) := R(\check X_{I}^x)$ with an arbitrary~$\vartheta_I^{n,*}\in \Theta$, and that $\check F^{1}_i$ is not parametrized over $\Theta$ (see Setting \ref{set:discrete}\;(v); hence $\vartheta_i^{0,*}\in \Theta$ is also an~arbitrary). 

        We numerically solve the problem given in \eqref{eq:loss_minimizat} by using stochastic gradient descent (SGD) algorithms (see, e.g., \cite[Section 4.3]{goodfellow2016deep}). Then we provide a pseudo-code in Algorithm~\ref{alg:exact_policy_iteration} to show how the policy iteration can be implemented. 

        \begin{remark} Note that the deep splitting method of \cite{Becker-Jentzen-Neufeld-Deep-Splitting-21,deep_splitting_convergence} is not the only neural realization of our policy evaluation; instead deep BSDEs\,/\,PDEs schemes of \cite{han18,pham2020deepBSDE,sirignano2018dgm} can be an alternative. More recently, several articles, including \cite{pham2022deepBSDE_erroranalysis,neufeld2024full}, provide the error analyses for such methods. To obtain a full error-analysis of our policy iteration algorithm, one would need to relax the standard Lipschitz and H\"older conditions on BSDE generators in the mentioned articles so as to cover the generator
        $\check F^{N,\lambda ,\check \pi^n}$ in \eqref{eq:Markov_generator}, 
        and then incorporate the policy evaluation errors from the neural approximations (under such relaxed conditions) into the convergence rate established in Theorem~\ref{thm:policy_improvement}. We defer this direction to a future~work.
        \end{remark}

    \section{Experiments}\label{sec:experiment}
    In this section,\footnote{All computations were performed using PyTorch on a Mac Mini with Apple M4 Pro processor and 64GB RAM. The complete code is available at: \url{https://github.com/GEOR-TS/Exploratory_Robust_Stopping_RL}.}
    we analyze some examples to support the applicability of Algorithm~\ref{alg:exact_policy_iteration}. 
    Let us fix $g(t,z)\equiv-\varepsilon|z|$ for $(t,z)\in[0,T]\times \mathbb{R}^d$, where $\varepsilon\geq 0$ represents the degree of ambiguity. By Remark~\ref{rem:represent_g_exp}, for any $\xi \in L^2(\mathcal{F}_{\tau};\mathbb{R}^d)$, it holds that $ \mathcal{E}^g_t[\xi] = \esssup_{\vartheta \in \mathcal{B}^{\varepsilon}} \mathbb{E}_t^{\mathbb{P}^{\vartheta}}[\xi]$, where $\mathcal{B}^{\varepsilon} $ includes all $\mathbb{F}$-progressively measurable processes $(\vartheta_t)_{t\in [0,T]} $ such that $|\vartheta_t| \le \varepsilon $ $\mathbb{P}\otimes dt$-a.e.. 
    
    In the training phase, 
    following Setting \ref{set:discrete}\;(vi),  
    we parametrize $v^{N,\lambda,n}(t_i, x)$ by 
    \[
        v^{n}_i(x ; \vartheta^n_i) = R(x) + \mathcal{NN}^1(x,R(x) ; \vartheta^n_i),\quad x\in \mathbb{R}^d,
    \]
    where $\mathcal{NN}^1(\cdot,\cdot; \vartheta^n_i) : \mathbb{R}^{d}\times\mathbb{R} \to \mathbb{R}$ denotes an FNN of depth $2$, width $20+d$, and $\mathrm{ReLU}$ activation, and $\vartheta^n_i  \in \Theta $ denotes the parameters of the FNN. 
    In all experiments, the number of policy iterations, epochs and the training batch size is set to $\overline n = 10 $,  $\overline \ell = 1000 $ and $2^{10}$, respectively. For numerical stability and training efficiency, we apply batch normalization before the input and at each hidden layer, together with Xavier normal initialization and the ADAM optimizer. 
    To make dependencies explicit, we denote by $(v^{N,\lambda,\star;\varepsilon}_i)_{i=0}^{I}$, obtained after sufficient policy iterations, under penalty factor $N$, temperature $\lambda $, and ambiguity degree $\varepsilon$.

    \subsection{Example 1}\label{ex:example1}
    We first conduct experiments on {1-dimensional} American put~/ call holder's stopping problems to illustrate the policy improvement, convergence, stability, and robustness of Algorithm~\ref{alg:exact_policy_iteration}. The simulation settings are as follows: under Setting \ref{set:discrete}, we let the running reward $r(\cdot) \equiv 0 $, the discounting factor $\beta_t \equiv r_{*}$, the volatility $\widetilde{\sigma}^o (t, x) = 0.4x$,  $x\in \mathbb{R}$, the initial price and strike price $x_0= \Gamma = 40$, and
        \begin{itemize}
            \item[(i)] ({Put})
            $T=1$, $I = 50 $, 
           the interest rate $ r_{*} = 0.06 $,  
            the payoff $R(x) = (\Gamma - x)^{+}$, the drift $ \widetilde{b}^o (t,x) = r_{*} x$;

            \item[(ii)] ({Call}) 
            $T=0.5$, $I = 100 $, 
            the dividend rates in the training simulator ${\delta}_{\mathrm{train}}=0.05 $ and in the testing simulator $ \delta $
             $\in \{0, 0.05, 0.1, 0.15, 0.2, 0.25\}$, the interest rate $ r_{*} = 0.05 $, the payoff $R(x) = (x - \Gamma)^{+}$, the drift $  \widetilde{b}^o(t,x) = (r_{*} - \delta ) x$. 
        \end{itemize}

    We first examine the policy improvement and convergence of Algorithm \ref{alg:exact_policy_iteration}. For the {\it put-type} stopping problem, we fix $\lambda = 1$ and $N=10 $, and consider several ambiguity degrees $\varepsilon \in \{ 0,0.2,0.4 \} $. The reference values $R^{\mathrm{ref}}_{\varepsilon} $ for $\varepsilon \in \{ 0,0.2,0.4 \} $ are obtained by solving \eqref{eq:g_BSDE_explore_optimal} for the corresponding optimal value function using the {\it deep backward scheme} of Hur\'e et al.~\cite{pham2020deepBSDE}, yielding $R^{\mathrm{ref}}_{0} = 5.302$, $R^{\mathrm{ref}}_{0.2} = 4.420$, $R^{\mathrm{ref}}_{0.4} = 3.725$.       
    The results illustrating the policy improvement and convergence are shown in Fig.\;\ref{fig:policy_improvement_convergence}, which align well with the theoretical findings in Theorem~\ref{thm:policy_improvement}. 
    
    Similarly, for the {\it call-type} stopping problem, we again fix $\lambda = 1 , N=10 $ and consider the same several ambiguity degrees. The reference values $R^{\mathrm{ref}}_{\varepsilon} $ computed by the deep backward scheme are $R^{\mathrm{ref}}_{0} = 4.378$, $R^{\mathrm{ref}}_{0.2} = 3.677$, $R^{\mathrm{ref}}_{0.4} = 3.130$. The corresponding policy improvement and convergence results are depicted in Figure~\ref{fig:policy_improvement_convergence}.

    \begin{figure}[tbp] \label{fig:policy_improvement_convergence}
    \begin{minipage}[t]{0.45 \linewidth} 
    \centering
    \includegraphics[height=3.8cm,width=5.8cm]{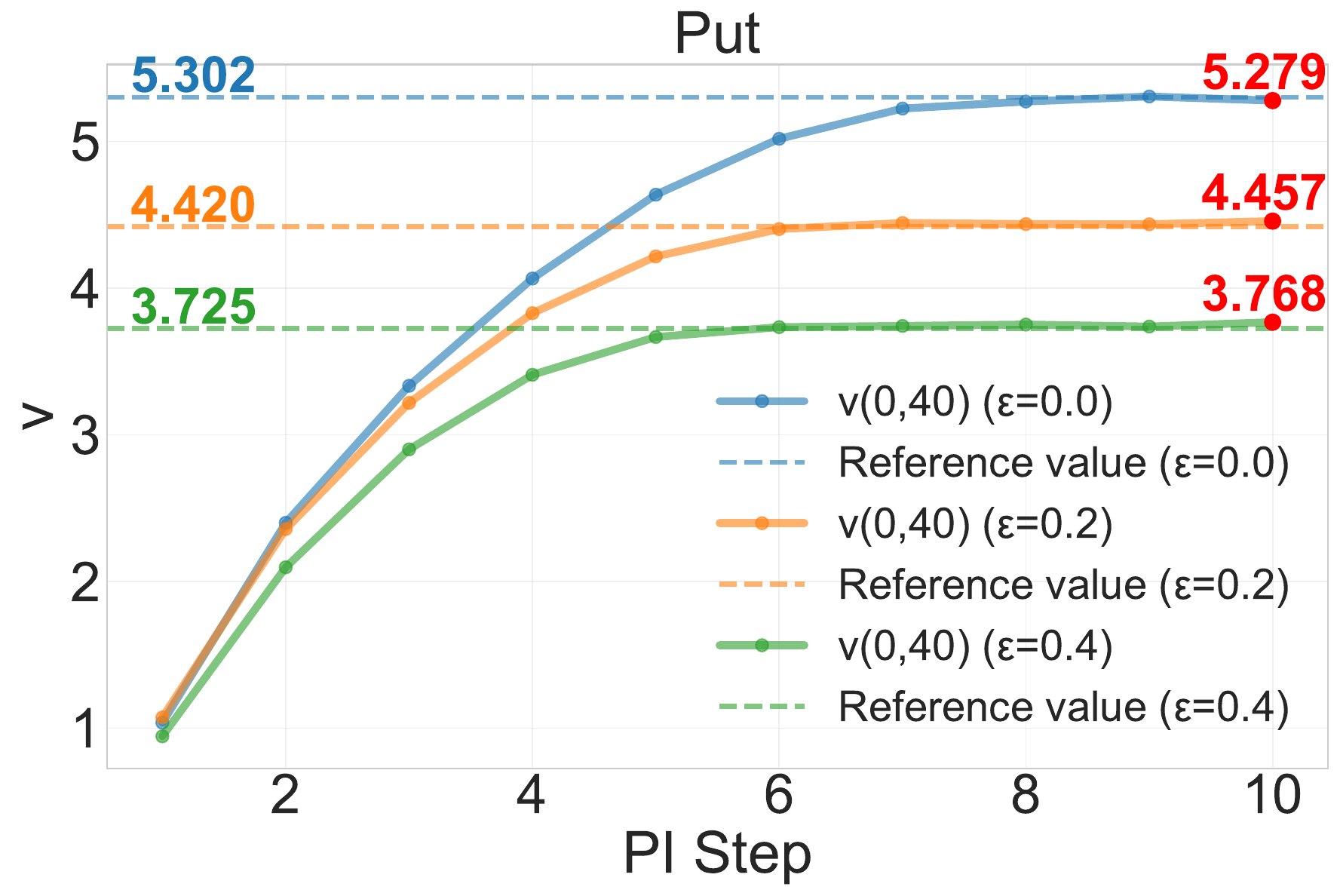}
    \label{fig:10}
    \end{minipage}%
    \begin{minipage}[t]{0.48\linewidth} 
    \centering
    \includegraphics[height=3.8cm,width=5.8cm]{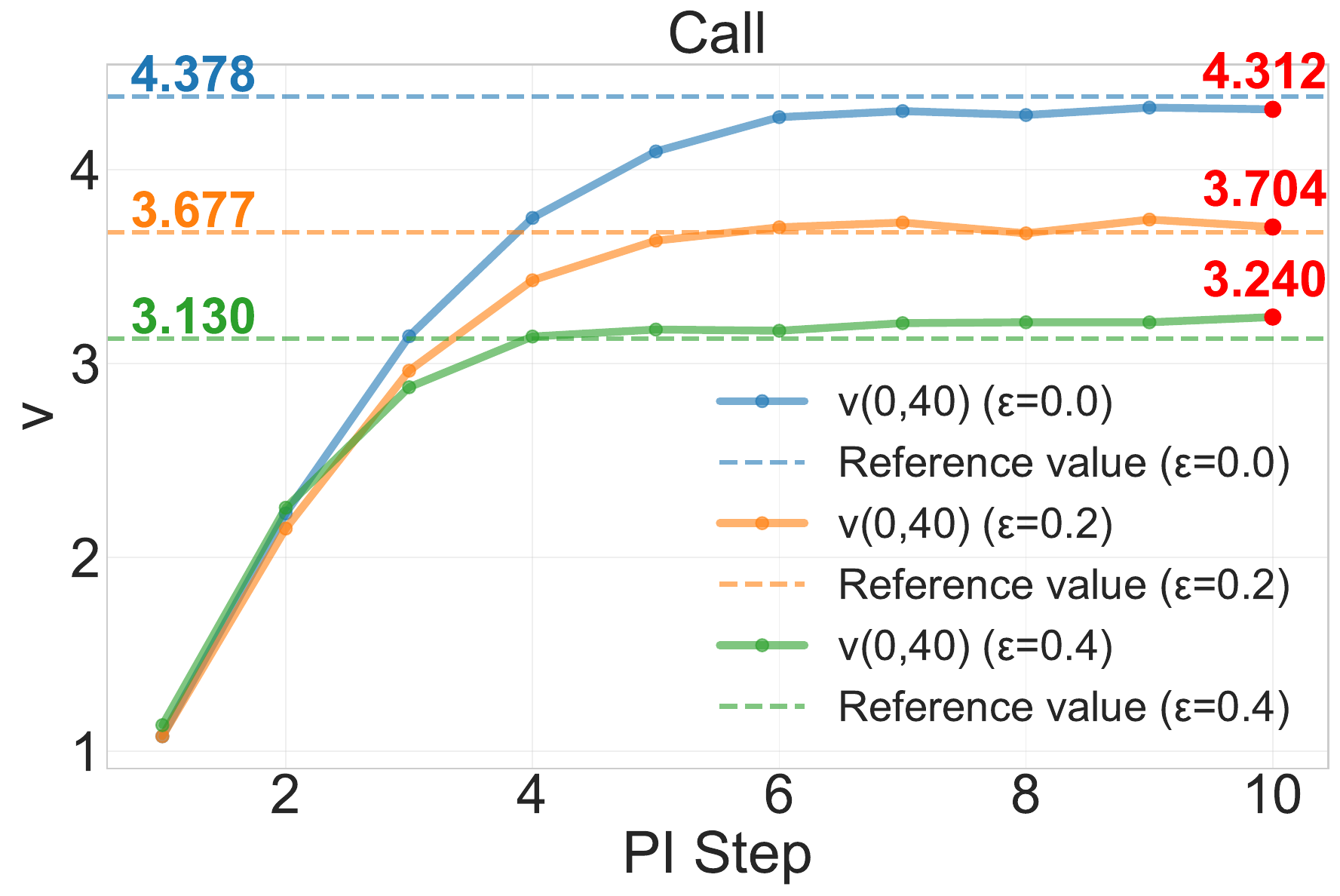}
    \label{fig:10}
    \end{minipage}%
    \caption{Policy improvement and convergence of Algorithm \ref{alg:exact_policy_iteration} in Example 1.}
    \end{figure}

    To examine the stability of Algorithm~\ref{alg:exact_policy_iteration}, we vary the penalty, temperature and ambiguity levels as $N \in \{5, 10, 20 \}$, $\lambda \in \{0.01, 1, 5 \} $, and $\varepsilon\in \{0,0.2,0.4\}$, and present the corresponding values of $v^{N,\lambda,\star;\varepsilon}_0$ in Table~\ref{tab:stability_put} (obtained after at-least 10 iterations of the policy improvement; see Fig.\;\ref{fig:policy_improvement_convergence}).
    These results align with the stability analysis w.r.t.~$\lambda$ given in Theorem~\ref{thm:stability} and the sensitivity analysis of robust optimization problems w.r.t.~ambiguity level examined in \cite[Theorem 2.13]{bartl2023sensitivity}, \cite[Corollary 5.4]{chen2024robust}.


    \begin{table}[t]
    \centering
    \small
    \caption{Stability of Algorithm~\ref{alg:exact_policy_iteration} w.r.t.~the penalty, temperature and ambiguity levels in Example 1.}
    \label{tab:stability_put}
    \begin{tabular}{c | c c c | c c c | c c c}
    \hline
    \multirow{3}{*}{$\varepsilon$} & \multicolumn{9}{c}{$v^{N,\lambda,\star;\varepsilon}_0(40)$} \\
    \cline{2-10}
     & \multicolumn{3}{c|}{$N = 5$} & \multicolumn{3}{c|}{$N = 10$} & \multicolumn{3}{c}{$N = 20$} \\
    \cline{2-10}
     & $\lambda=0.01$ & $\lambda=1$ & $\lambda=5$ & $\lambda=0.01$ & $\lambda=1$ & $\lambda=5$ & $\lambda=0.01$ & $\lambda=1$ & $\lambda=5$ \\
    \hline
    $0$ & $5.222$ & $5.278$ & $6.113$ & $5.233$ & $5.279$ & $5.788$ & $5.239$ & $5.296$ & $5.570$ \\
    $0.2$ & $4.311$ & $4.413$ & $5.258$ & $4.412$ & $4.457$ & $4.958$ & $4.425$ & $4.496$ & $4.765$ \\
    $0.4$ & $3.596$ & $3.671$ & $4.497$ & $3.702$ & $3.768$ & $4.221$ & $3.792$ & $3.814$ & $4.101$ \\
    \hline
    \end{tabular}
    \end{table}

    \begin{figure}[tbp] \label{fig:Robustness-RE-Call}
    \begin{minipage}[t]{0.33\linewidth} 
    \centering
    \includegraphics[height=2.8cm,width=4.2cm]{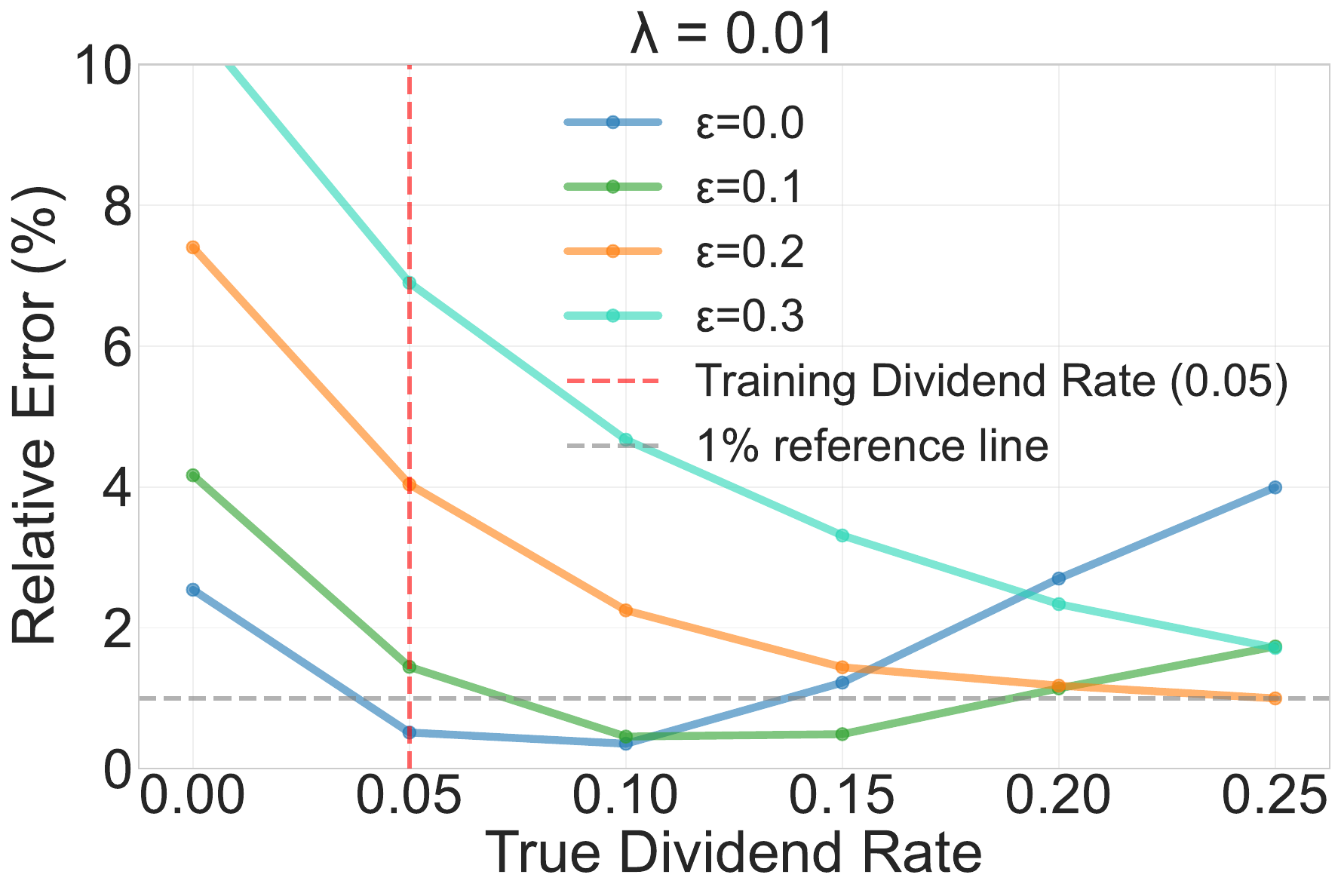}
    \label{fig:10_1}
    \end{minipage}%
    \begin{minipage}[t]{0.33\linewidth} 
    \centering
    \includegraphics[height=2.8cm,width=4.2cm]{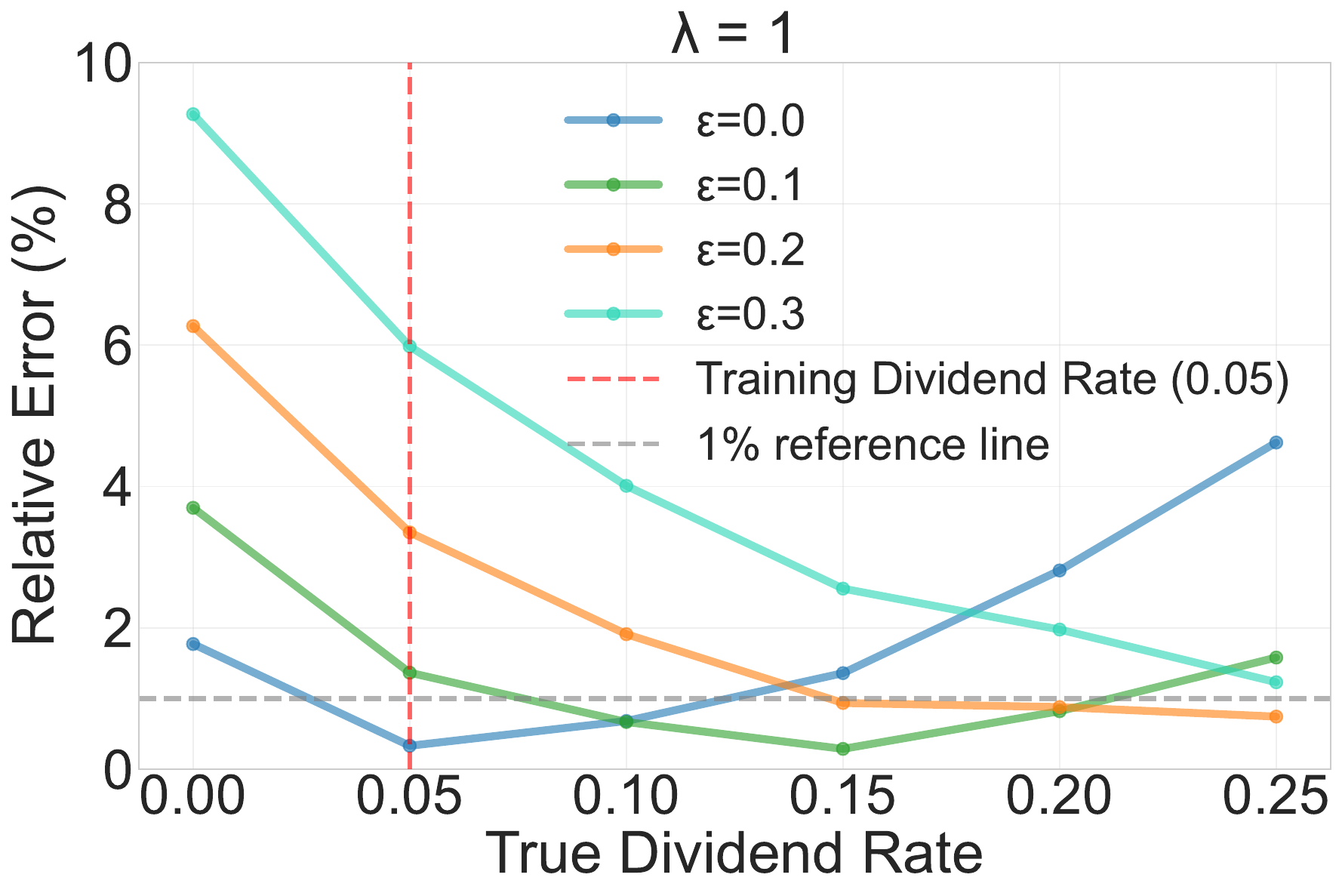}
    \label{fig:10}
    \end{minipage}%
    \begin{minipage}[t]{0.33\linewidth} 
    \centering
    \includegraphics[height=2.8cm,width=4.2cm]{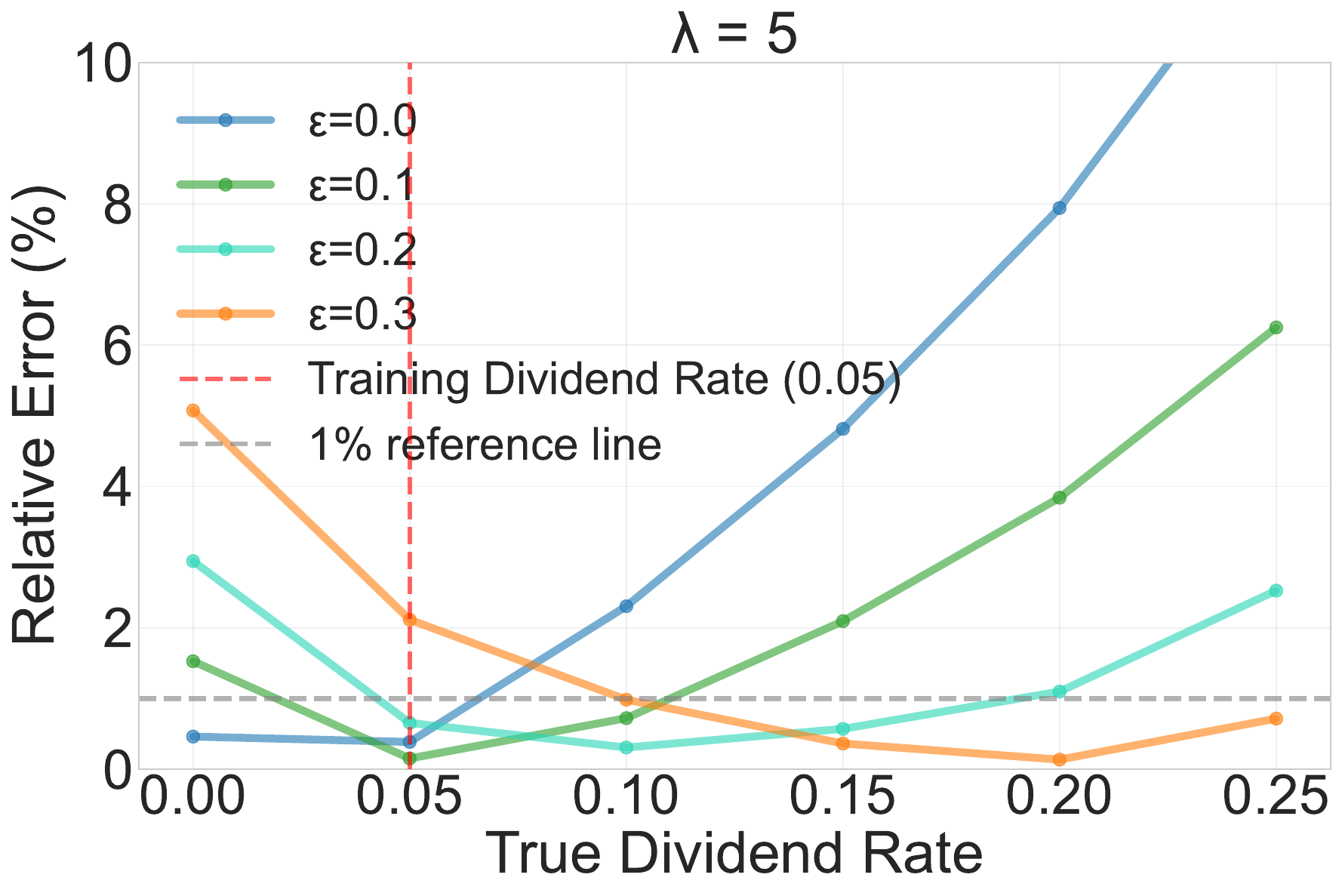}
    \label{fig:10_2}
    \end{minipage}
    \caption{Robustness performance under unknown testing environments in Example 1.}
    \end{figure}


    Lastly, we examine the robustness of Algorithm \ref{alg:exact_policy_iteration} in the call-type stopping problem. In particular, to assess the out-of-sample performance under an {\it unknown} testing environment, we re-simulate new state trajectories $(\check{X}^{x,\delta}_i)_{i=0}^I$ as in Setting~\ref{set:discrete}\;(i) under different dividend rates $\delta \in \{0, 0.05, 0.1, 0.15, 0.2, 0.25\}$, where the number of simulated trajectories is set to $ 2^{20}$.
    We fix $N=10 $ and consider configuration $\varepsilon \in \{0,0.1,0.2,0.3 \} $ both for 
    $\lambda \in \{0.01 , 1 , 5\}$. 
    Using the trained value functions $(v^{10,\lambda,\star;\varepsilon}_i(\cdot))_{i=0}^{I}$, the stopping time $\tau_\delta^{\varepsilon,\lambda}$ and the corresponding 
    discounted expected reward~$\check{R}^{\varepsilon,\lambda}_{\delta}$ under such unknown environment are defined~by 
    \begin{align}
         \tau^{\varepsilon,\lambda}_{\delta}  &:= \inf\big\{t_i :  v^{10,\lambda,\star;\varepsilon}_i(\check{X}^{x,\delta}_{i}) - {\bf 1}_{\{\lambda=0.01\}}  \lambda (T-t_i) \log2\le R(\check{X}^{x,\delta}_{i}), \; i=0,\ldots,I  \big\},\nonumber\\
        &\quad \check{R}^{\varepsilon,\lambda}_{\delta} := \mathbb{E} [ e^{-r_{*} \tau^{\varepsilon,\lambda}_{\delta} }R(\check{X}^{x,\delta}_{i}) ].\label{eq:explore_stopping_example1}
    \end{align}  
    

    \color{black}
    For each $\delta$, the corresponding American call option price represents the optimal value for the call-type stopping problem, which can be computed using the implicit finite-difference method of Forsyth and Vetzal~\cite{Forsyth-Penalty-02}. We therefore use the option prices computed by this method as reference values $R^{\mathrm{ref} }_{\delta} $ for each $\delta$, yielding $R^{\mathrm{ref} }_{0} = 4.954,$ $R^{\mathrm{ref} }_{0.05} = 4.410$, $R^{\mathrm{ref} }_{0.1} = 3.990$,  $R^{\mathrm{ref} }_{0.15} = 3.634$, $R^{\mathrm{ref} }_{2} = 3.324$, $R^{\mathrm{ref} }_{0.25} = 3.052 $. The relative errors are then computed as $  { |\check{R}^{\varepsilon,\lambda}_{\delta} - R^{\mathrm{ref}}_{\delta}  | }/{ R^{\mathrm{ref}}_{\delta} } $.

    Moreover, we note that the stopping rule in \eqref{eq:explore_stopping_example1} follows the robust exploratory stopping time $\tau_0^{x;N,\lambda}$ in \eqref{eq:explore_stopping} for the case $\lambda=0.01$, because it approximates the optimal stopping time $\tau^{*,x}_0$ in \eqref{eq:robust_OST} for sufficiently small $\lambda$ and large $N$ (see Theorem~\ref{thm:explore_stop_conv}). For larger values of $\lambda$, in particular $\lambda=1,5$, we heuristically adopt the stopping rule $\overline{\tau}_0^{x;N,\lambda}$ in Remark~\ref{rem:convergence_issue} without the adjustment term $\lambda (T-t_i)\log 2$, since the adjustment term becomes relatively large and may not only impact the approximation accuracy, but also diminish the exploration benefit.

    Last, we note in Fig.\;\ref{fig:Robustness-RE-Call} that when the dividend rate in the testing environment does not deviate significantly from that of the trained environment (near $\delta=0.05$), the non-robust value function (i.e., with $\varepsilon=0$) performs comparably well. However, as the discrepancy between the training and testing environments increases, the benefit of the robust framework becomes evident, as reflected by lower relative errors for higher ambiguity levels (i.e., $\varepsilon=0.2,0.3$). Notably, as the exploration parameter $\lambda$ increases, the robust framework performs better under larger discrepancies between the training and test environments. 
    \color{black}

    \subsection{Example 2} 
    \begin{figure}[tbp] \label{fig:policy_improvement_convergence_high_dim}
    \begin{minipage}[t]{0.45 \linewidth} 
    \centering
    \includegraphics[height=3.8cm,width=5.8cm]{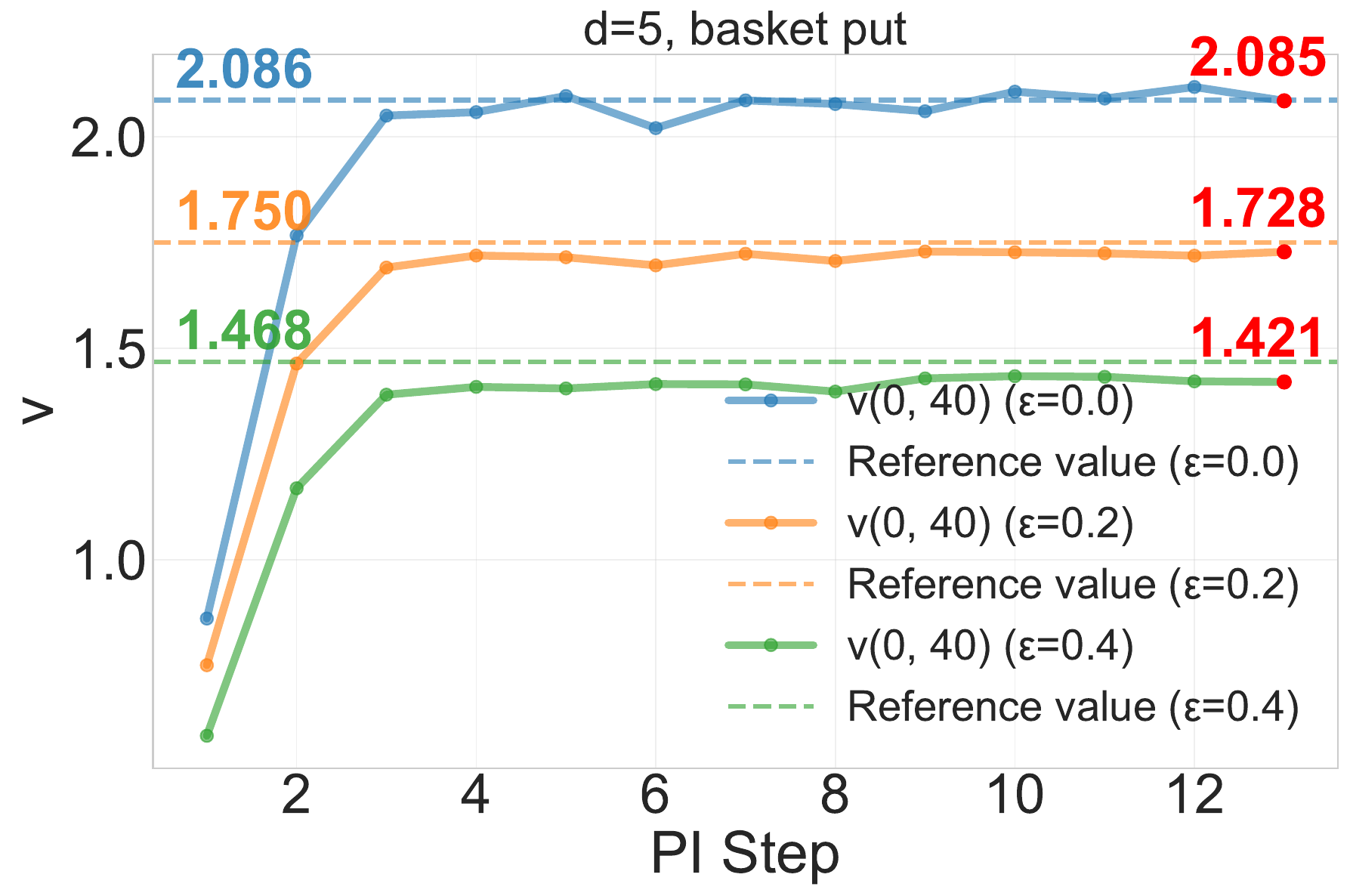}
    \label{fig:10}
    \end{minipage}%
    \begin{minipage}[t]{0.48\linewidth} 
    \centering
    \includegraphics[height=3.8cm,width=5.8cm]{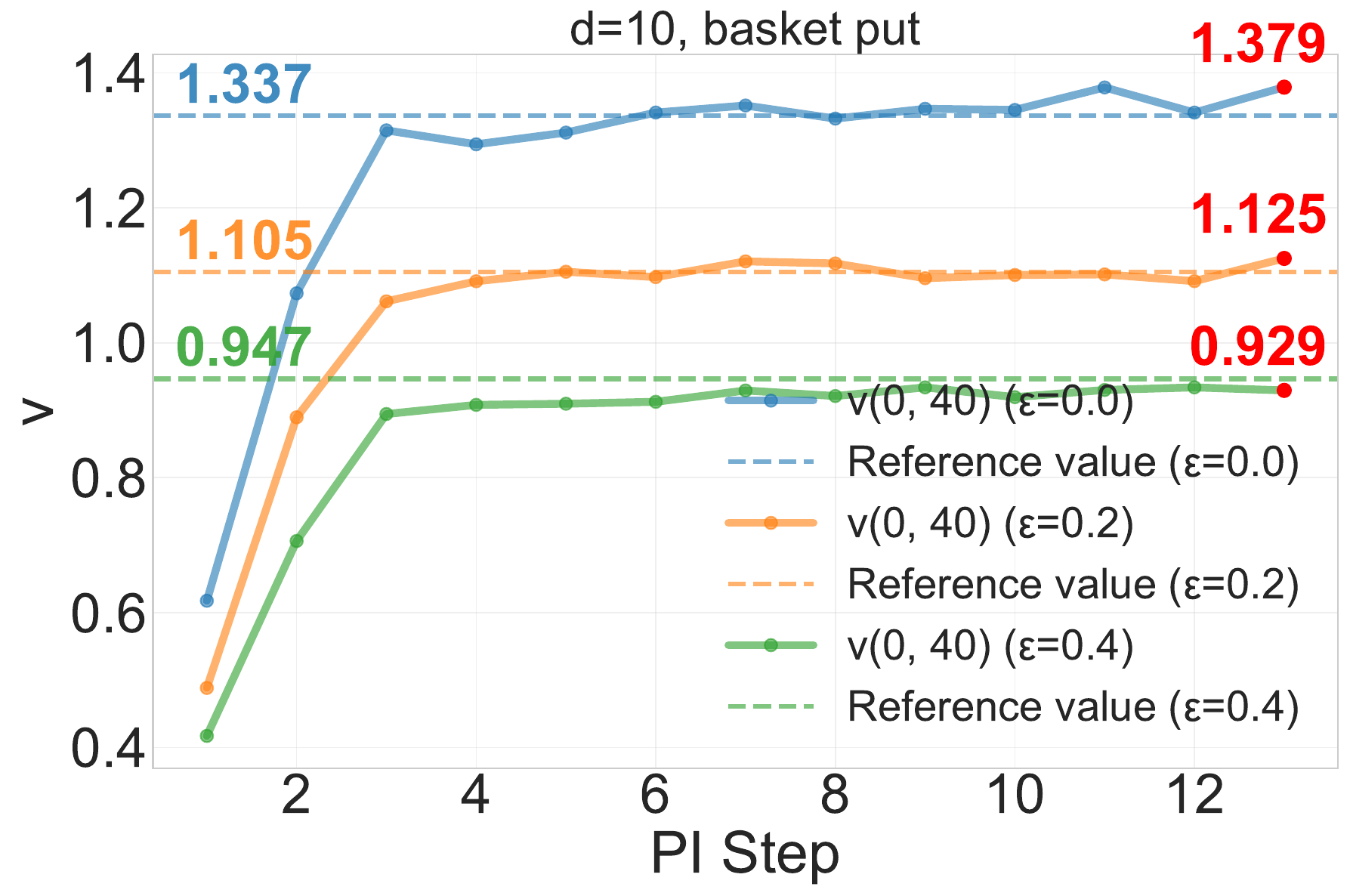}
    \label{fig:10}
    \end{minipage}%
    \caption{Policy improvement and convergence of Algorithm \ref{alg:exact_policy_iteration} in Example 2}
    \end{figure}

    We now implement Algorithm \ref{alg:exact_policy_iteration} in multi-dimensional settings with \(d\in\{5,10\}\). To examine policy improvement and convergence, we consider the American basket put stopping problem. The simulation settings are as follow: \(T=1,I=50,\lambda=1\), and \(N=10\). The discount factor is \(\beta_t\equiv 0.06\). The simulator is specified by the volatility and drift functions \(\widetilde{\sigma}^o(t,x)=\operatorname{diag}(0.4)x\) and \(\widetilde{b}^o(t,x)=0.06 x\), respectively, for \(x\in\mathbb R^d\), where $\operatorname{diag}(a)$ denotes the diagonal matrix with entries $a\in\mathbb{R}$. We consider ambiguity levels \(\varepsilon\in\{0,0.2,0.4\}\). Moreover, we set the running reward to \(r(\cdot)\equiv 0\), the initial price to \(x_0=(40,\dots,40)^{\top}\in\mathbb R^d\), and the strike price to \(\Gamma=40\). Last, the payoff is 
    \[
        R(x)=\mbox{$(\Gamma-\frac{1}{d}\sum_{i=1}^d x_i)^+,\quad x=(x_1,\ldots,x_d)^\top\in\mathbb R^d$}.
    \]
    
    As in Example~1, we compute the reference values \(R^{\mathrm{ref}}_{\varepsilon}\) for \(\varepsilon\in\{0,0.2,0.4\}\) using the deep backward scheme of \cite{pham2020deepBSDE}. For \(d=5\), we obtain \(R^{\mathrm{ref}}_{0}=2.086\), \(R^{\mathrm{ref}}_{0.2}=1.750\), and \(R^{\mathrm{ref}}_{0.4}=1.468\). For \(d=10\), we obtain \(R^{\mathrm{ref}}_{0}=1.337\), \(R^{\mathrm{ref}}_{0.2}=1.105\), and \(R^{\mathrm{ref}}_{0.4}=0.947\). 
    The results in Fig.~\ref{fig:policy_improvement_convergence_high_dim} illustrate policy improvement and convergence in high-dimensional settings, supporting the scalability of our algorithm.

    \begin{figure}[tbp] \label{fig:robust_high_dim}
    \begin{minipage}[t]{0.45 \linewidth} 
    \centering
    \includegraphics[height=3.6cm,width=5.4cm]{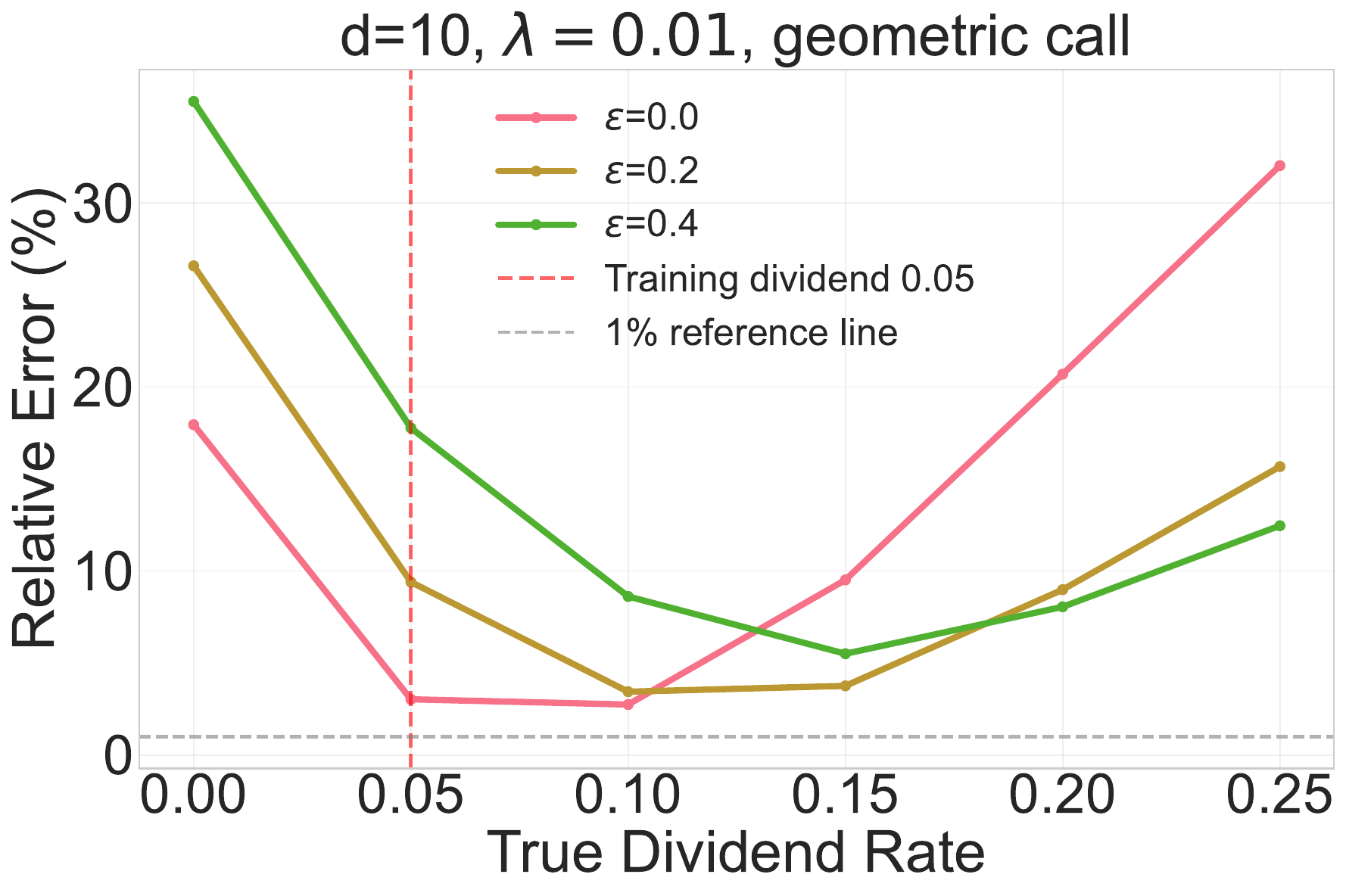}
    \label{fig:10}
    \end{minipage}%
    \begin{minipage}[t]{0.48\linewidth} 
    \centering
    \includegraphics[height=3.6cm,width=5.4cm]{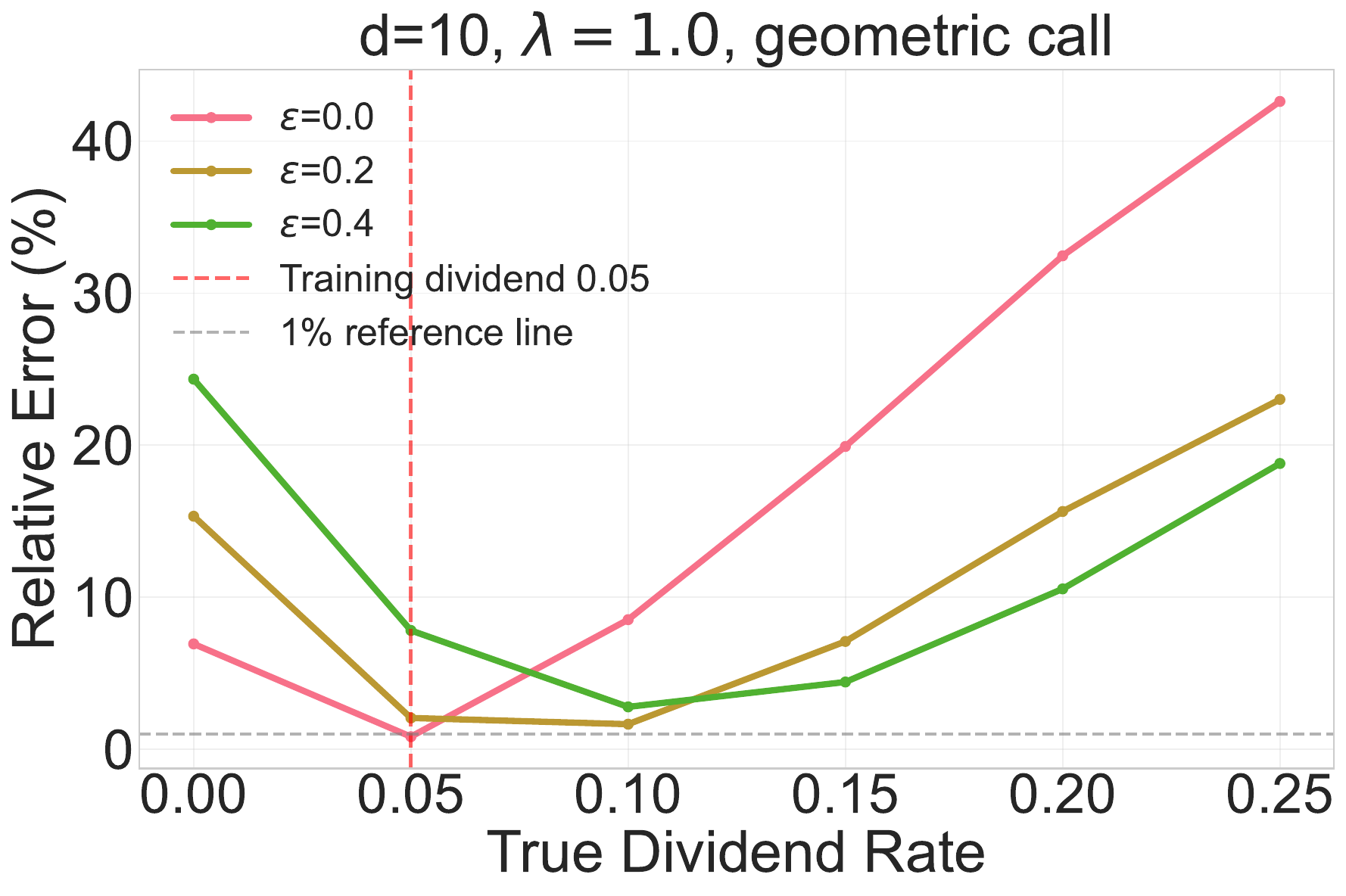}
    \label{fig:10}
    \end{minipage}%
     \caption{Robustness Performance under unknown testing environments in Example 2.}
    \end{figure}
    Next, for the robustness test, we consider the geometric call stopping problem with \(d=10\). The simulation settings are as follow: \(T=0.5\),\;\(I=100\), \(\lambda\in\{0.01,1\}\), and \(N=10\). The interest rate is specified by \(r_{*}=0.05\) and the ambiguity level varies over \(\varepsilon\in\{0,0.2,0.4\}\). The drift and volatility functions are specified by \(\widetilde{b}^{o}(t,x)=(r_{*}-\delta)x\) and \(\widetilde{\sigma}^{o}(t,x)=\operatorname{diag}(0.2)x\), $x\in\mathbb{R}^d$, respectively, where we let the dividend rate $\delta$ of the training simulator be \(\delta_{\mathrm{train}}=0.05\), whereas we let the dividend rate in the testing simulator vary over \(\delta\in\{0,0.05,0.1,0.15,0.2,0.25\}\). Last, the payoff is given by 
    \[
    \mbox{$R(x)=\big((\prod_{i=1}^d x_i)^{1/d}-\Gamma\big)^{+},\quad x=(x_1,\ldots,x_d)^{\top}\in\mathbb{R}^d.$}
    \]

    For the evaluation test, we adopt the stopping rule \(\tau^{\varepsilon,\lambda}_{\delta}\) and the expected reward~\(\check{R}_{\delta}^{\varepsilon,\lambda}\) from~\eqref{eq:explore_stopping_example1}, with the payoff replaced by the geometric call function above.

    Here we note that, due to the log-normal structure and the assumption of identical drift and volatility across dimensions, the 10-dimensional geometric call problem is equivalent to the one-dimensional problem with 
    \( \widetilde{b}^{o}(t,x) = 
    (r_{*}-\delta-\frac{(0.2)^2 }{2}(1-\frac1d )) x \)
    and 
    \(\widetilde{\sigma}^{o}(t,x) = \frac{0.2}{\sqrt d} x\), 
    $x \in \mathbb{R}.$ 
    Hence, we can apply the implicit finite-difference method of Forsyth and Vetzal~\cite{Forsyth-Penalty-02} to compute the reference values \(R^{\mathrm{ref}}_{\delta}\) in the one-dimensional setting for each \(\delta\). The corresponding reference values are given by \(R^{\mathrm{ref}}_{0}=1.061\), \(R^{\mathrm{ref}}_{0.05}=0.570\), \(R^{\mathrm{ref}}_{0.1}=0.350\), \(R^{\mathrm{ref}}_{0.15}=0.236\), \(R^{\mathrm{ref}}_{0.2}=0.172\), and \(R^{\mathrm{ref}}_{0.25}=0.134\). 
    The relative errors are defined by \(|\check{R}_{\delta}^{\varepsilon,\lambda}-R^{\mathrm{ref}}_{\delta}|/R^{\mathrm{ref}}_{\delta}\). 
    Fig.~\ref{fig:robust_high_dim} illustrates the robustness performance. As in Example~1, our robust RL algorithm performs better as the discrepancy between the training and testing environments increases.
    \color{black}

    \section{Proofs}\label{sec:proofs}
    \subsection{Proof of results in Section \ref{sec:robust_stopping}}\label{proof:pro:verification}
    \begin{proof}[Proof of Proposition \ref{pro:verification}]
    	{\it Step 1.} Fix $t\in[0,T]$ and let $\tau \in {\cal T}_t$. An application of It\^o's formula into $(e^{-\int_t^s \beta_udu}Y_s^{x})_{s\in[t,T]}$ ensures that 
    	\begin{align}\label{eq:Ito_formula}
    		\begin{aligned}
    			Y_t^x =&e^{-\int_t^\tau \beta_udu}Y_\tau^{x} +\int_t^\tau e^{-\int_t^s \beta_udu}\big(r(X_s^x)+g(s,Z_s^x)\big)ds\\
    			&-\int_t^\tau e^{-\int_t^s \beta_udu} Z_s^x dB_s+\int_t^\tau e^{-\int_t^s \beta_udu} dK_s^x.
    		\end{aligned}
    	\end{align}
    	
    	Since $\operatorname{I}_t^{x;\tau}\in L^2({\cal F}_\tau;\mathbb{R})$ (see Remark \ref{rem:wellposed_stopping}), $dK_s^x\geq 0$ for all $s\geq [t,\tau]$ (as $K^x$ is nondecreasing) and $Y_\tau^{x} \geq R(X_\tau^x)$ $\mathbb{P}$-a.s. (see Definition \ref{def:reflected_gBSDE}), it holds that $\mathbb{P}$-a.s.
    	\begin{align}
    		{\cal E}_t^g[\operatorname{I}_t^{x;\tau}]&\leq {\cal E}_t^g\bigg[Y_t^x-\int_t^\tau e^{-\int_t^s \beta_udu} g(s,Z_s^x)ds+\int_t^\tau e^{-\int_t^s \beta_udu} Z_s^x dB_s\bigg]\nonumber\\
    		&=Y_t^x +{\cal E}^g_t\bigg[-\int_t^\tau e^{-\int_t^s \beta_udu} g(s,Z_s^x)ds+\int_t^\tau e^{-\int_t^s \beta_udu} Z_s^x dB_s\bigg]\label{eq:trans0} \\
    		&
            =Y_t^x +{\cal E}^g_t\bigg[-\int_t^\tau g(s,e^{-\int_t^s \beta_udu} Z_s^x)ds+\int_t^\tau e^{-\int_t^s \beta_udu} Z_s^x dB_s\bigg]=:Y_t^x+ \operatorname{II}_t,\nonumber
    	\end{align}
    	where the first equality holds by the property of ${\cal E}_t^g[\cdot]$ given in \cite[Lemma 2.1]{coquet2002filtration}. 
        The second equality holds since $0<e^{-\int_t^s \beta_udu}\leq 1$ (as $\beta_t\geq 0$ for all $t\in[0,T]$; Assumption~\ref{as:reward}\;(ii)), and for every $\omega\in \Omega$ and $t\in[0,T]$, it holds that  $g(\omega,t,cz)= cg(\omega,t,z)$ for all $z\in \mathbb{R}^d$ and $c\in(0,1)$; see Definition \ref{def:g_expect}\;(iii).

        
        Moreover, since $Z^x\in \mathbb{L}^2(\mathbb{R}^d)$, $(\widetilde Z_s^x)_{s\in[t,T]}:=(e^{-\int_t^s \beta_udu} Z_s^x)_{s\in[t,T]}$ is $\mathbb{F}$-predictable and satisfies $\mathbb{E}[\int_t^T|\widetilde Z_s^x|^2ds]<\infty$. Therefore, the integrand of $\operatorname{II}_t$ is ${\cal E}^g$-martingale and its conditional $g$-expectation $\operatorname{II}_t$ equals zero; see \cite[Lemma\;5.5]{coquet2002filtration}. 
        \color{black}

	Combining this with \eqref{eq:trans0}, we obtain that ${\cal E}_t^g[\operatorname{I}_t^{x;\tau}]\leq Y_t^x$ $\mathbb{P}$-a.s.. Since $\tau\in {\cal T}_t$ is chosen some arbitrary, we have $V_t^x=\esssup_{\tau \in {\cal T}_t}{\cal E}_t^g[\operatorname{I}_t^{x;\tau}]\leq Y_t^x.$
	
    \vspace{0.5em}
	\noindent {\it Step 2.}  We now claim that $ Y_t^x\leq V_t^x$. Let $\tau_{t}^{*,x}\in {\cal T}_t$ be defined as in \eqref{eq:robust_OST}. Since $\int_0^{\tau_{t}^{*,x}}(Y_{s}^x- R(X_{s}^x))dK_s^{x}=0$ $\mathbb{P}$-a.s. (see Definition \ref{def:reflected_gBSDE}\;(iii)) 
	and $Y_{s}^x> R(X_{s}^x)$ for all $s \in(0, \tau_{t}^{*,x})$ (by definition of $\tau_{t}^{*,x}$),  it holds that 
	\begin{align}\label{eq:vanish_K}
		dK_s^{x}=0\quad \mbox{$\mathbb{P}$-a.s., for all $s\in(0,\tau_{t}^{*,x})$}.
	\end{align}
	
	Applying It\^o's formula as given in \eqref{eq:Ito_formula} and using \eqref{eq:vanish_K}, we obtain that $\mathbb{P}$-a.s.
	\begin{align}\label{eq:Ito_formula1}
            \begin{aligned}
		Y_t^x =&e^{-\int_t^{\tau_{t}^{*,x}} \beta_udu}Y_{\tau_{t}^{*,x}}^{x} +\int_t^{\tau_{t}^{*,x}} e^{-\int_t^s \beta_udu}\Big(r(X_s^x)+g(s,Z_s^x)\Big)ds\\
        &-\int_t^{\tau_{t}^{*,x}} e^{-\int_t^s \beta_udu} Z_s^x dB_s.
            \end{aligned}
	\end{align}
	
	By putting $\int_t^{\tau_{t}^{*,x}}e^{-\int_t^s \beta_udu}g(s,Z_s^x)ds-\int_t^{\tau_{t}^{*,x}} e^{-\int_t^s \beta_udu} Z_s^x  dB_s$ into the left-hand side of \eqref{eq:Ito_formula1} and taking the conditional $g$-expectation ${\cal E}_t^g[\cdot]$, $\mathbb{P}$-a.s.,
	\begin{align}
			&{\cal E}_t^g\bigg[\int_t^{\tau_{t}^{*,x}} e^{-\int_t^s \beta_udu}r(X_s^x)ds+ e^{-\int_t^{\tau_{t}^{*,x}} \beta_udu}Y_{\tau_{t}^{*,x}}^{x}\bigg] \nonumber \\
			&
            \;=Y_t^x+{\cal E}_t^g\bigg[-\int_t^{\tau_{t}^{*,x}}e^{-\int_t^s \beta_udu}g(s,Z_s^x)ds+\int_t^{\tau_{t}^{*,x}}  e^{-\int_t^s \beta_udu} Z_s^x dB_s \bigg]=Y_t^x, \label{eq:trans2} 
	\end{align}
    where the first equality follows from the property of ${\cal E}_t^g[\cdot]$ given in \cite[Lemma 2.1]{coquet2002filtration} and the second equality follows from the same arguments presented to show that the term $\operatorname{II}_t$ in \eqref{eq:trans0} equals zero; see also Definition \ref{def:g_expect}\;(iii). 
	
	Last, since $Y_{\tau_{t}^{*,x}}^{x}= R(X_{\tau_{t}^{*,x}}^x)$ $\mathbb{P}$-a.s. (by definition of~$\tau_{t}^{*,x}$ in \eqref{eq:robust_OST}), we have 
	\[
		Y_t^x= {\cal E}_t^g[\operatorname{I}_t^{x;\tau_{t}^{*,x}}]\leq V_t^x=\esssup_{\tau \in {\cal T}_t}{\cal E}_t^g[\operatorname{I}_t^{x;\tau}]\quad\mbox{$\mathbb{P}$-a.s.,} 
	\]
	as claimed. Therefore, $\tau_{t}^{*,x}$ is optimal to \eqref{eq:robust_stopping}. This completes the~proof. 
    \end{proof}
    \begin{proof}[Proof of Proposition \ref{pro:verification_control}]
		{\it Step 1.} Let $N\in \mathbb{N}$ and $\alpha\in {\cal A}$ be given. 
		Recalling $F^x$ given in~\eqref{eq:benchmark_gen}, we denote for every $(\omega,t,y,z)\in \Omega\times [0,T]\times \mathbb{R}\times \mathbb{R^d}$ by 
		\begin{align}\label{eq:controlled_generator}
			\widetilde F_t^{x;N,\alpha}(\omega,y,z):= F_t^x(\omega,y,z)+N\alpha_t(\omega)\,\big(R(X_t^x(\omega))-y\big).
		\end{align}

		Then  consider the following controlled BSDE: for $t\in[0,T]$
		\begin{align}\label{eq:controlled_BSDE}
			\widetilde Y_t^{x;N,\alpha}= R (X_T^x) +  \int_t^T \widetilde F^{x;N,\alpha}_s\big(\widetilde Y_s^{x;N,\alpha},\widetilde Z_s^{x;N,\alpha}\big) ds-\int_t^T \widetilde Z_s^{x;N,\alpha} dB_s.
		\end{align}
		
		Since $\alpha$ is uniformly bounded (noting that it has values only in $\{0,1\}$), one can deduce that the parameters of the BSDE \eqref{eq:controlled_BSDE} satisfies all the conditions given in \cite[Section 3]{pardoux1990adapted}. Hence, 
        there exists a unique solution $(\widetilde Y_t^{x;N,\alpha},\widetilde Z_t^{x;N,\alpha})_{t\in [0,T]}\in \mathbb{S}^2(\mathbb{R})\times \mathbb{L}^2(\mathbb{R}^d)$ to the controlled BSDE \eqref{eq:controlled_BSDE}. 
		
		We now claim that $\widetilde Y_t^{x;N,\alpha}={\cal E}_t^g[\operatorname{I}_t^{x;N,\alpha}]$ for all $t\in[0,T]$. Indeed, applying  It\^o's formula into $(e^{-\int_t^s (\beta_u+N\alpha_u)du}\widetilde Y_s^{x;N,\alpha})_{s\in[t,T]}$ and then taking ${\cal E}_t^g[\cdot]$ yield, 
		\begin{align*}
			&
            {\cal E}_t^g[\operatorname{I}_t^{x;N,\alpha}]-\widetilde Y_t^{x;N,\alpha}\\
            &\quad ={\cal E}_t^g\bigg[-\int_t^T e^{-\int_t^s (\beta_u+N\alpha_u)du}g(s,\widetilde Z_s^{x;N,\alpha} )ds+\int_t^T  e^{-\int_t^s (\beta_u+N\alpha_u)du} \widetilde Z_s^{x;N,\alpha}  dB_s\bigg],
		\end{align*}
		where we have used the property of ${\cal E}_t^g[\cdot]$ given in \cite[Lemma 2.1]{coquet2002filtration}.
		
        Moreover, by using the same arguments presented for the ${\cal E}^g$-martingale property in \eqref{eq:trans0} 
		(see the proof of Proposition \ref{pro:verification}) we can deduce that the conditional $g$-expectation appearing in the right-hand side of the above equals zero (i.e., the integrand therein is an ${\cal E}^g$-martingale). Hence the claim holds. \color{black}
		
		\vspace{0.5em}
		\noindent {\it Step 2.} It suffices to show that for every $t\in [0,T]$ $\mathbb{P}$-a.s., \(
			Y_t^{x;N}=\esssup_{\alpha \in {\cal A}} \widetilde Y_t^{x;N,\alpha}.
		\) 
		Indeed, it follows from Step 1 that for every $\alpha\in {\cal A}$ the parameters of the BSDE \eqref{eq:controlled_BSDE} satisfies the conditions given in 
		\cite[Section~3]{pardoux1990adapted}. Furthermore, the parameters of the BSDE \eqref{eq:g_BSDE} also satisfies the conditions (see Remark \ref{rem:penalized_BSDE}\;(i)).
		
		We recall that $F^{x;N}$ given in \eqref{eq:benchmark_gen_N} is the generator of \eqref{eq:g_BSDE} and that for each $\alpha \in {\cal A}$ $\widetilde F^{x;N,\alpha}$ given in \eqref{eq:controlled_generator} is the generator of \eqref{eq:controlled_BSDE}. Then for any $\alpha\in {\cal A}$, it holds that for all $(\omega,t,y,z)\in \Omega \times[0,T]\times \mathbb{R}\times \mathbb{R}^d$
		\begin{align*}
			F_t^{x;N}(\omega,y,z)= F_t^{x}(\omega,y,z)+N\max_{a\in \{0,1\}}\Big\{\big(R(X_t^x(\omega))-y\big)a\Big\} \geq  \widetilde F_t^{x;N,\alpha}(\omega, y,z).
		\end{align*}
	This ensures that for every $t\in[0,T]$,
		\begin{align}\label{eq:compare_generator}
			F^{x;N}_t(Y_t^{x;N},Z_t^{x;N})\geq \esssup_{\alpha \in {\cal A}} \widetilde F_t^{x;N,\alpha}(Y_t^{x;N},Z_t^{x;N}).
		\end{align}

		Moreover, let $\alpha^{*,x;N}$ be defined as in \eqref{eq:const_alpha_0}. Clearly, it takes values in $\{0,1\}$. Moreover, since $Y^{x;N}$ is in $\mathbb{S}^2(\mathbb{R})$ (see Remark \ref{rem:penalized_BSDE}\;(i)) and $(R(X_t^x))_{t\in[0,T]}$ are $\mathbb{F}$-progressively measurable (noting that $X^x$ is It\^o $(\mathbb{F},\mathbb{P})$-semimartingale and $R$ is continuous), $\alpha^{*,x;N}$ is $\mathbb{F}$-progressively measurable. Therefore, we have that $\alpha^{*,x;N}\in{\cal A}$.
		
		Moreover, by definition of $\alpha^{*,x;N}$, 
		\(
		\widetilde{F}_t^{x;N,\alpha^{*,x;N}}(Y_t^{x;N},Z_t^{x;N})=F_t^{x;N}(Y_t^{x;N},Z_t^{x;N}).
		\) 
		This implies that the inequality given in \eqref{eq:compare_generator} holds as equality. 
		
		Therefore, an application of \cite[Proposition 3.1]{el1997backward} ensures the claim to hold.
		
		\vspace{0.5em}
		\noindent {\it Step 3.} Lastly, it follows from \cite[Corollary 3.3]{el1997backward} that the process $\alpha^{*,x;N}\in{\cal A}$ is optimal for the problem given in Step 2., i.e., for all $t\in[0,T]$
		\(
		\esssup_{\alpha \in {\cal A}} \widetilde Y_t^{x;N,\alpha}=\widetilde Y_t^{x;N,\alpha^{*,x;N}}.
		\) 
		This completes the proof.
        \end{proof}

    \subsection{Proof of results in Section \ref{subsec:robust_exploratory}}\label{proof:thm:verification}
    \begin{proof}[Proof of Theorem \ref{thm:verification}]
        Let $N\in\mathbb{N}$ and $\lambda> 0$ be given.  We prove (i) by showing that the parameters of the BSDE \eqref{eq:g_BSDE_explore_optimal} satisfy all the conditions given in \cite[Section~3]{pardoux1990adapted} to ensure its existence and uniqueness to hold. 
        
        As $r$ is a Borel function and both $(\beta_t)_{t\in[0,T]}$ and $(g(t,z))_{t\in[0,T]}$ are $\mathbb{F}$-progressively measurable for all $z\in \mathbb{R}^d$, $(\overline F_t^{x;N,\lambda}(y,z))_{t\in[0,T]}$ given in \eqref{eq:optimal_explore_gen} is  $\mathbb{F}$-progressively measurable for all $(y,z)\in \mathbb{R}\times \mathbb{R}^d$. Moreover, since $g(\omega,t,0)=0$ for all $(\omega,t)\in\Omega\times [0,T]$ (see Definition\;\ref{def:g_expect}\;(iii)), by the growth conditions of $r$ and $R$ (see Assumption\;\ref{as:reward}\;(i)) and Remark \ref{rem:refer_semi}\;(i), it holds that $\|\overline F^{x;N,\lambda}_\cdot(0,0)\|_{\mathbb{L}^2}<\infty$ and $\|R(X_\cdot^x)\|_{\mathbb{L}^2}<\infty$. 
		

        By the regularity of $g$ given in Definition \ref{def:g_expect}\;(ii) and the boundedness of $(\beta_t)_{t\in[0,T]}$ (see Assumption \ref{as:reward}\;(ii)), for every $(\omega,t)\in\Omega\times [0,T]$, $y,\hat y\in \mathbb{R}$ and $z,\hat z\in \mathbb{R}^d$
	\begin{align}\label{eq:priori_bench_gen}
		\begin{aligned}
		|F_t^x(\omega,y,z)-F_t^x(\omega,\hat y,\hat z)|&\leq \beta_t(\omega) |y-\hat y| + |g(\omega,t,z)-g(\omega,t,\hat z)|\\
		&\leq (C_\beta+ \kappa)\big( |y-\hat y|+ |z-\hat z|\big).
		\end{aligned}
	\end{align}
        
        Moreover, since the map
		\begin{align}\label{eq:log_exp}
			h^{N,\lambda}:\mathbb{R}\ni s \to h^{N,\lambda}(s):= \lambda \log(\exp(-N\lambda^{-1}\,s)+1)\in (0,+\infty)
		\end{align}
		is strictly decreasing and {\color{blue} \color{black} $N$-Lipschitz continuous}, we are able to see that 
		for every $\omega\in \Omega $, $t\in[0,T]$, and $y,\hat y \in \mathbb{R}$
        \begin{align}
		|G^{x;N,\lambda}_t(\omega,y)-G^{x;N,\lambda}_t(\omega,\hat y)|
            &\leq N \Big|\big(R(X_t^x(\omega))-y\big)-\big(R(X_t^x(\omega))-\hat y\big)\Big|\nonumber\\
            &\quad +\Big|h^{N,\lambda}\big(R(X_t^x(\omega))-y\big)- h^{N,\lambda}\big((R(X_t^x(\omega))-\hat y\big)\Big|\label{eq:est_GNL}\\
			&\leq  2N |y-\hat y|. \nonumber
	\end{align} 

        From \eqref{eq:priori_bench_gen} and \eqref{eq:est_GNL} and the definition of $\overline F^{x;N,\lambda}$ given in \eqref{eq:optimal_explore_gen}, it follows that the desired priori estimate of $\overline F^{x;N,\lambda}$ holds. Hence an application of \cite[Theorem~3.1]{pardoux1990adapted} ensures the existence and uniqueness of the solution of \eqref{eq:g_BSDE_explore_optimal}, as claimed.

    \vspace{0.5em}
     \noindent We now prove (ii). By the representation given in \eqref{eq:g_exp_explore}, it  suffices to show that $\mathbb{P}$-a.s. \(
			\overline Y_t^{x;N,\lambda}=\esssup_{\pi \in \Pi} \overline Y_t^{x;N,\lambda,\pi}.\)
		
        Since  ${\cal H}$ is strictly convex on $[0,1]$ (see Remark \ref{rem:entropy}), it holds that 
		for every $(\omega,t,y,z)\in \Omega \times[0,T]\times \mathbb{R}\times \mathbb{R}^d$
		\begin{align}\label{eq:variational_form}
			\overline F_t^{x;N,\lambda}(\omega,y,z)= F_t^{x}(\omega,y,z)+\max_{a\in [0,1]}\Big\{N(R(X_t^x(\omega))-y)a-\lambda {\cal H}(a) \Big\}, 
		\end{align}
		where the equality holds by the first-order-optimality condition with the corresponding maximizer $
			a^*= (1+e^{-{N}{\lambda}^{-1}(R(X_t^x(\omega))-y)})^{-1}\in [0,1].$
		
		
		Then it follows from \eqref{eq:variational_form} that $\overline F_t^{x;N,\lambda}(\omega,y,z) \geq  \overline F_t^{x;N,\lambda,\pi}(\omega,y,z)$ for all $\pi\in \Pi$ and $(\omega,t,y,z)\in \Omega \times[0,T]\times \mathbb{R}\times \mathbb{R}^d$. This ensures that for every $t\in[0,T]$,
		\begin{align}\label{eq:compare_explore_generator}
			\overline F_t^{x;N,\lambda}\big(\overline Y_t^{x;N,\lambda},\overline Z_t^{x;N,\lambda}\big)\geq \esssup_{\pi \in {\cal A}} \overline F_t^{x;N,\lambda,\pi}(\overline Y_t^{x;N,\lambda},\overline Z_t^{x;N,\lambda}).
		\end{align}
		
		Moreover, let $\pi^{*,x;N,\lambda}:=(\pi^{*,x;N,\lambda}_t)_{t\in[0,T]}$ be defined as in \eqref{eq:robust_control}. Clearly, it takes values in $[0,1]$. Moreover, since $\overline Y^{x;N,\lambda}$ is in $\mathbb{S}^2(\mathbb{R})$ (see part (i)) and $(R(X_t^x))_{t\in[0,T]}$ are $\mathbb{F}$-progressively measurable (noting that $X^x$ is It\^o $(\mathbb{F},\mathbb{P})$-semimartingale and $R$ is continuous), $\pi^{*,x;N,\lambda}$ is $\mathbb{F}$-progressively measurable. Therefore, we have that $\pi^{*,x;N,\lambda}_t\in \Pi$.
		
		Furthermore, by \eqref{eq:variational_form} and definition of $\pi^{*,x;N,\lambda}$, it holds that
		\[
		\overline F_t^{x;N,\lambda,\pi^{*,x;N,\lambda}}(\overline Y_t^{x;N,\lambda},\overline Z_t^{x;N,\lambda})=\overline F_t^{x;N,\lambda}\big(\overline Y_t^{x;N,\lambda},\overline Z_t^{x;N,\lambda}\big),
		\]
		which implies that the inequality given in \eqref{eq:compare_explore_generator} holds as equality. 
		
		Therefore, an application of \cite[Proposition 3.1]{el1997backward} ensures the claim to hold.
        
        Moreover, a direct application of \cite[Corollary 3.3]{el1997backward} ensures that $\pi^{*,x;N,\lambda}$ is optimal for $\overline V^{x;N,\lambda}$ given in \eqref{eq:robust_explore_problem}. This completes the proof.
    \end{proof}
    \begin{proof}[Proof of Theorem \ref{thm:stability}]
    Let $N\in \mathbb{N}$ and $\lambda>0$ be given. Recall that $\overline F^{x;N,\lambda}$ and $F^{x;N}$, given in \eqref{eq:optimal_explore_gen} and \eqref{eq:benchmark_gen_N}, respectively, are the generators of the BSDEs \eqref{eq:g_BSDE_explore_optimal} and \eqref{eq:g_BSDE}, respectively. Then set for every $(\omega,t,y,z)\in \Omega \times [0,T]\times\mathbb{R}\times \mathbb{R}^d$ 
		\begin{align}
				\Delta \overline F_t^{x;N,\lambda}(\omega,y,z):=&\overline F_t^{x;N,\lambda}(\omega,y,z)-F_t^{x;N}(\omega,y,z)\nonumber\\
				=&h^{N,\lambda}(R(X_t^x(\omega))-y\big)+N \big(R(X_t^x(\omega))-y) {\bf 1}_{\{y>R(X_t^x(\omega))\}},\label{eq:compare_explore}
		\end{align}
		where we recall that the map $h^{N,\lambda}$ is given in \eqref{eq:log_exp}. 
		
		Since the map $h^{N,\lambda}$ is positive and satisfies that $h^{N,\lambda}(s)= -Ns + h^{N,\lambda}(-s)$ for all $s\in \mathbb{R}$, it holds that for every $(\omega,t,y,z)\in \Omega \times [0,T]\times\mathbb{R}\times \mathbb{R}^d$ 
		\begin{align}
			\Delta \overline F_t^{x;N,\lambda}(\omega,t,y,z) &\geq 
				\bigg[h^{N,\lambda}\big(R(X_t^x(\omega))-y\big)+N \big(R(X_t^x(\omega))-y\big) \bigg] {\bf 1}_{\{y>R(X_t^x(\omega))\}}\nonumber\\
				& = h^{N,\lambda}(-(R(X_t^x(\omega))-y)) {\bf 1}_{\{y>R(X_t^x(\omega))\}}\geq 0.\label{eq:lb_F}
		\end{align}
		
		Moreover, as the terminal conditions of \eqref{eq:g_BSDE_explore_optimal} and \eqref{eq:g_BSDE} are coincide, it follows~from the comparison principle of BSDEs (see, e.g., \cite[Theorem 2.2]{el1997backward}) that \eqref{eq:compare} holds.

		\vspace{0.5em}
		It remains to show that \eqref{eq:stability} holds. Set  for every $N\in \mathbb{N}$ and $\lambda>0$,
		\begin{align}
			\Delta{Y}^{x;N,\lambda} :=\overline{Y}^{x;N,\lambda}-Y^{x;N},\qquad  \Delta{Z}^{x;N,\lambda} :=\overline{Z}^{x;N,\lambda}-Z^{x;N}.
		\end{align} 
		
		Since the parameters of the BSDEs \eqref{eq:g_BSDE_explore_optimal} and \eqref{eq:g_BSDE} satisfy the conditions given in \cite[Section~5]{el1997backward} (with exponent $2$)  for all $N\in \mathbb{N}$ and $\lambda>0$, we are able to apply \cite[Proposition 5.1]{el1997backward} to have the following a priori estimates:\footnote{In \cite[Section~5]{el1997backward}, the filtration (denoted by $({\cal F}_t)$ therein) is set to be right-continuous and~complete (and hence not necessarily the Brownian filtration, as in our case). Nevertheless, we can still apply the stability result given in \cite[Proposition 5.1]{el1997backward}, since the martingales $M^i$, $i = 1, 2$, appearing therein are orthogonal to the Brownian motion. Consequently, the arguments remain valid when the general filtration is replaced with the Brownian one.} 
		for every $N\in \mathbb{N}$ and $\lambda>0$
		\begin{align}\label{eq:priori_peng}
			\|\Delta{Y}^{x;N,\lambda}\|_{\mathbb{S}^2}+\|\Delta{Z}^{x;N,\lambda}\|_{\mathbb{L}^2}\leq C  \mathbb{E} \bigg[\int_0^T|\Delta \overline F_t^{x;N,\lambda}(Y_t^{x,N},Z_t^{x;N}) |^2dt  \bigg]^{\frac{1}{2}},
		\end{align}
		with some $C>0$ (depending on $T$ but not on $N$,$\lambda$), and $\Delta \overline F{}^{x;N,\lambda}$ given in~\eqref{eq:compare_explore}.
		
		We note that $h^{N,\lambda}(s)=\lambda\log(\exp(-N\lambda^{-1} s)+1)\leq  \lambda\log 2$ for all $s\geq 0$. On the other hand, a simple calculation ensures for every $N\in \mathbb{N}$ and $\lambda>0$ that the map
		\[
		\overline h^{N,\lambda}:[0,\infty)\ni s\to \overline h^{N,\lambda}(s):= h^{N,\lambda}(-s)-Ns = \lambda \log(\exp({N}{\lambda}^{-1} s)+1)-Ns
		\]
		is (strictly) decreasing. This implies that $\overline h^{N,\lambda}(s)\leq  \overline h^{N,\lambda}(0)= \lambda \log 2$ for all $s\geq 0$. 
		
		From these observations and \eqref{eq:lb_F}, we have for every $N\in \mathbb{N}$, $\lambda>0$, and $t\in[0,T]$
		\begin{align}
				0\leq\Delta \overline F{}^{x;N,\lambda}_t(Y_t^{x,N},Z_t^{x;N})=& h^{N,\lambda}\Big(-\big(Y_t^{x,N}-R(X_t^x)\big)\Big) {\bf 1}_{\{Y_t^{x,N}\leq R(X_t^x)\}}\nonumber \\
				& + \overline h^{N,\lambda}\big(Y_t^{x,N}-R(X_t^x)\big) {\bf 1}_{\{Y_t^{x,N}>R(X_t^x)\}}
				\leq \lambda \log 2.\label{eq:ub_F}
		\end{align}
		
		Combining \eqref{eq:ub_F} with \eqref{eq:priori_peng} concludes that for every $N\in \mathbb{N}$ and $\lambda>0$ the estimate in \eqref{eq:stability} holds, 
		as claimed. This completes the proof. 
        \end{proof}

    The following lemma establishes the monotone convergence of the hitting times of an increasing sequence of continuous processes, and it will serve as a key ingredient in the proof of Theorem~\ref{thm:explore_stop_conv}.

    \begin{lemma}
    \label{lem:general_tau_conv}
    Let $ ({L})_{t\in[0,T]}$, $(\mathcal{Y}_t)_{t\in[0,T]}$, and $(\mathcal{Y}_t^n)_{t\in[0,T]}$, $n \in \mathbb N$, be real-valued, $\mathbb F$-progressively measurable continuous processes satisfying that
    \begin{itemize}
        \item[(i)] for every $n\in\mathbb N$ and $t\in[0,T]$,
        \(
        \mathcal{Y}_t^n\le \mathcal{Y}_t^{n+1}\le \mathcal{Y}_t\)
        $\mathbb P\text{-a.s.}$,
        \item[(ii)] 
        \(
         \Delta^n := \sup_{t\in[0,T]} (\mathcal{Y}_t-\mathcal{Y}_t^n) \downarrow 0
        \)
        $\mathbb P\text{-a.s.}$, 
        as $n \uparrow \infty$.
    \end{itemize}
    Moreover, we let 
    \(
    \tau_t^n:=\inf\{s\in[t,T]: \mathcal{Y}_s^n\le L_s\}\wedge T\), 
    \(
    \tau_t:=\inf\{s\in[t,T]: \mathcal{Y}_s\le L_s\}\wedge T,
    \)
    $t\in[0,T]$. 
    Then 
    it holds for every $t\in[0,T]$ that as $n\uparrow \infty$, 
    \(
    \tau_t^n \uparrow \tau_t
    \)
    $\mathbb P\text{-a.s.}$.
    \end{lemma}

    \begin{proof}
    By the condition (i), 
    it holds that 
    \(
    \tau_t^n\le \tau_t^{n+1}\le \tau_t
    \)
    $\mathbb P\text{-a.s.}$, for any $n\in \mathbb{N}$. 
    Moreover, since the filtration $\mathbb{F}$  is right-continuous (see Section \ref{sec:notation}), there exists an $\mathbb F$-stopping time $\bar\tau_t$ such that
    \(
     \bar\tau_t=\lim_{n\to \infty}\tau_t^n\le \tau_t,
    \)
    $\mathbb P\text{-a.s.}$. 
    
    It suffices to show that $\bar\tau_t\ge \tau_t$ $\mathbb{P}$-a.s. To this end, from the conditions (i) and~(ii), we consider a set $\Omega_0\subseteq \Omega$ satisfying that $\mathbb P(\Omega_0)=1$ and every $\omega\in\Omega_0$ satisfies 
        \[
        \mathcal{Y}_s^n(\omega) \le \mathcal{Y}_s^{n+1}(\omega) \le \mathcal{Y}_s(\omega)
        \quad \mbox{for all $s\in[t,T]$ and $n\in\mathbb N$},\quad \mbox{and $ \Delta^n(\omega) \downarrow 0 $ as $ n \uparrow \infty $.}
        \]

    Now, fix $\omega\in\Omega_0$.
    If $\tau_t(\omega)=t$, then, by definition of $\tau_t$, we have $\mathcal{Y}_t(\omega)\le L_t(\omega)$. This implies 
    \(
    \mathcal{Y}_t^n(\omega)\le L_t(\omega)
    \)
    for all $n\in \mathbb{N}$, 
    and therefore $\tau_t^n(\omega)=t$ for all $n\in \mathbb{N}$. As a consequence, we have
    \(
    \bar\tau_t(\omega)=t=\tau_t(\omega).
    \)
    
    Next, suppose that $\tau_t(\omega)>t$. Let $\varepsilon\in(0,\tau_t(\omega)-t)$. By definition of $\tau_t(\omega)$,
    \[
    \mathcal{Y}_s(\omega)>L_s(\omega)
    \quad \text{for all } s\in[t,\tau_t(\omega)-\varepsilon].
    \]
    Since the map $[t,\tau_t(\omega)-\varepsilon]\ni s \mapsto \mathcal{Y}_s(\omega)-L_s(\omega)$ is continuous, we can define
    \[
    m_{\omega,\varepsilon}
    :=
    \min_{s\in[t,\tau_t(\omega)-\varepsilon]}
    (\mathcal{Y}_s(\omega)-L_s(\omega))
    >0.
    \]
    We let $N_{\omega,\varepsilon}\in\mathbb{N}$ so that 
    \(
     \Delta^n(\omega) <m_{\omega,\varepsilon}
    \)
    for all $n\geq N_{\omega,\varepsilon}$. Then for every $n\geq N_{\omega,\varepsilon}$,
    \[
    \mathcal{Y}_s^n(\omega)
    \ge
    \mathcal{Y}_s(\omega)- \Delta^n(\omega)
    >
    L_s(\omega)\quad \mbox{for all $s\in[t,\tau_t(\omega)-\varepsilon]$.}
    \]
    
    This implies that
    \(
    \tau_t^n(\omega)\ge \tau_t(\omega)-\varepsilon
    \)
    for all $n\geq N_{\omega,\varepsilon}$. By letting $n\to \infty$, we have 
    \(
    \bar\tau_t(\omega)\ge \tau_t(\omega)-\varepsilon .
    \)
    Since $\varepsilon>0$ is arbitrary, we conclude that
    \(
    \bar\tau_t(\omega)\ge \tau_t(\omega).
    \)

    Since this inequality holds for any $\omega \in\Omega_0$ and we have $\mathbb P(\Omega_0)=1$ (by definition of $\Omega_0$), it follows that $\bar\tau_t\ge \tau_t$ $\mathbb{P}$-a.s.. 
    This completes the proof.
    \end{proof}

    \begin{proof}[Proof of Theorem \ref{thm:explore_stop_conv}]
    {\it Step 1.} We start with showing that for any $\lambda>0$, $N\in \mathbb{N}$ and $t\in[0,T]$, $\overline{Y}_t^{x;N,\lambda}-\lambda (T-t)\log2\leq Y_t^{x}$ $\mathbb{P}$-a.s..

    For notational simplicity, let $(\Delta \overline Y^{x;N,\lambda},\Delta \overline Z^{x;N,\lambda}):=(\overline{Y}^{x;N,\lambda}-Y^{x},\overline{Z}^{x;N,\lambda}-Z^{x})$ and $(\Delta Y^{x;N},\Delta Z^{x;N}):=({Y}^{x;N}-Y^{x},{Z}^{x;N}-Z^{x})$.  
    
    Using \eqref{eq:reflected_gBSDE} and \eqref{eq:g_BSDE_explore_optimal} as well as \eqref{eq:optimal_explore_gen}, 
    we have 
    \begin{align}
        \nonumber \Delta \overline{Y}_t^{x;N,\lambda}
        &=
        \int_t^T
        \big(
        -\beta_s \Delta \overline{Y}_s^{x;N,\lambda}
        +
        g(s,\overline Z_s^{x;N,\lambda})-g(s,Z_s^x)
        +
        G_s^{x;N,\lambda}(\overline{Y}_s^{x;N,\lambda})
        \big) ds \\
        &\quad 
        -\int_t^T \Delta \overline Z_s^{x;N,\lambda} dB_s
        -(K_{T}^x-K_t^x).\label{eq:explore_minus_reflected}
    \end{align}

    Moreover, using \eqref{eq:reflected_gBSDE} and \eqref{eq:g_BSDE}, we have
    \begin{align}
        -(K_{T}^x-K_t^x)
        &=
        \int_t^T
        \big(
        \beta_s \Delta Y_s^{x;N}
        -
        g(s,Z_s^{x;N})+g(s,Z_s^{x})
        -
        N(R(X_s^x)-Y_s^{x;N})^+
        \big) ds \nonumber \\
        &\quad +
        \int_t^T \Delta Z_s^N dB_s+\Delta Y_t^{x;N}.
        \label{eq:K_elimination}
    \end{align}

    Substituting \eqref{eq:K_elimination} into \eqref{eq:explore_minus_reflected} gives
    \begin{align}
        \Delta \overline{Y}_t^{x;N,\lambda}
        &=
        \Delta Y_t^{x;N}-
        \int_t^T
        \beta_s(\overline{Y}_s^{x;N,\lambda}-Y_s^{x;N})ds\nonumber \\
        &\quad +\int_t^T \big(G_s^{x;N,\lambda}(\overline Y_s^{x;N,\lambda})
        -
        N(R(X_s^x)-Y_s^{x;N})^+\big) ds \label{eq:main_difference_identity}\\
        &\quad
        +
        \int_t^T
        \big(
        g(s,\overline Z_s^{x;N,\lambda})-g(s,Z_s^{x;N})
        \big)ds -
        \int_t^T  (\overline{Z}_s^{x;N,\lambda}-Z_s^{x;N})dB_s. \nonumber
    \end{align}

    From Remark~\ref{rem:penalized_BSDE}\;(iii) and \eqref{eq:compare} in Theorem \ref{thm:stability}, we have that  for every $t\in[0,T]$ 
    \begin{align}\label{eq:monotone_gaps}
        \Delta Y_t^{x;N}\leq 0\quad \mbox{and}\quad \overline{Y}_t^{x;N,\lambda}-Y_t^{x;N}\geq 0\quad \mbox{$\mathbb{P}$-a.s.}.
    \end{align}
    Moreover, since 
    \(
    \underline G_s^{x;N,\lambda}(y):=G_s^{x;N,\lambda}(y)-N(R(X_s^x)-y)^+
    \le
    \lambda\log2\)
    for every $y\in \mathbb{R}$ and $s\in[t,T]$, 
    and $\overline{Y}_t^{x;N,\lambda}\geq Y_t^{x;N}$ $\mathbb{P}$-a.s., we have that for all $s\in[t,T]$,
    \begin{align}\label{eq:G_minus_penalty_bound}
    G_s^{x;N,\lambda}(\overline Y_s^{x;N,\lambda})
        -
        N(R(X_s^x)-Y_s^{x;N})^+\leq \underline G_s^{x;N,\lambda}(\overline Y_s^{x;N,\lambda})\leq \lambda\log2 \quad \mbox{$\mathbb{P}$-a.s..}
    \end{align}

    From \eqref{eq:monotone_gaps} and \eqref{eq:G_minus_penalty_bound} as well as the $\kappa$-Lipschitz continuity of $g$ (see Definition~\ref{def:g_expect}\;(ii)), it follows that $\Delta \overline{Y}_t^{x;N,\lambda}$ given in \eqref{eq:main_difference_identity} satisfies, $\mathbb{P}$-a.s.,
    \begin{align*}
    \Delta \overline{Y}_t^{x;N,\lambda}
    -
    \lambda (T-t)\log2
    \leq
    \int_t^T  \kappa |\overline{Z}_s^{x;N,\lambda}-Z_s^{x;N}| ds
    -
    \int_t^T (\overline{Z}_s^{x;N,\lambda}-Z_s^{x;N})dB_s.
    \end{align*}
    
    Since $Z^{x;N},\overline{Z}^{x;N,\lambda}\in \mathbb{L}^2(\mathbb{R}^d)$ (see Remark \ref{rem:penalized_BSDE}\;(i) and Theorem \ref{thm:verification}), the right-hand side of the above inequality is a ${\cal E}^{-\kappa}$-martingale. Hence, its conditional ${\cal E}^{-\kappa}$-expectation at $t$ equals zero $\mathbb{P}$-a.s.. This proves the claim.

    \noindent \textit{Step 2: Proof of (i).}~We show that $R(X_\cdot^x)$, $Y^x$, and $(Y_t^{x;N_n}-\lambda_n (T-t)\log2)_{t\in[0,T]}$, $n\in\mathbb{N}$, satisfy all the conditions of Lemma \ref{lem:general_tau_conv}, so that for every $t\in[0,T]$, 
    \begin{align}\label{eq:temp_stop}
    \underline\tau_t^n
    :=
    \inf\{
    s\in[t,T]:
    Y_s^{x;N_n}-\lambda_n (T-s)\log2 \le R(X_s^x)
    \}\wedge T
    \end{align}
    goes to $\tau_t^{*,x}$ $\mathbb{P}$-a.s., as $n\to \infty$ (see \eqref{eq:robust_OST}). 
    

    First, we note that all those processes are $\mathbb{F}$-progressively measurable, continuous processes because $X^x$, $Y^x$, and $Y^{x;N_n}$, $n\in\mathbb{N}$, are $\mathbb{F}$-progressively measurable and continuous (see \eqref{eq:refer_semi}, \eqref{eq:reflected_gBSDE}, \eqref{eq:g_BSDE}), and $R$ is continuous (see Assumption \ref{as:reward}\;(i)). 
    
    Since 
    \(
    N_n\uparrow\infty\) and \(\lambda_n\downarrow0
    \) 
    as $n\to \infty$, by Remark \ref{rem:penalized_BSDE}\;(iii), we have that for every $n\in \mathbb{N}$ and $t\in[0,T]$, $Y^{x;N_n}_t-\lambda_n (T-t)\log2 \leq Y^{x;N_{n+1}}_t-\lambda_{n+1} (T-t)\log2\leq Y_t^x$ $\mathbb{P}$-a.s.. Therefore, the condition (i) in Lemma~\ref{lem:general_tau_conv} holds.
    

    Next, we set \(\Delta^{x,n}:=\sup_{t\in[0,T]}(Y_t^x-(Y^{x;N_n}_t-\lambda_n (T-t)\log2 ))\ge0\), which 
    is non-increasing \(\P\)-a.s.. Define \(\Delta^{x,\infty}:=\lim_{n\to\infty}\Delta^{x,n}\). Then by Fatou's lemma,
    \[
    \mathbb E\big[|\Delta^{x,\infty}|^2\big]\leq \liminf_{n\to \infty}\mathbb E\big[|\Delta^{x,n}|^2\big]\le \liminf_{n\to \infty}\big(\|Y^x-Y^{N_n}\|_{\mathbb S^2}^2 + \lambda_nT\log 2\big)= 0,
    \]
    where the last equality follows from Remark \ref{rem:penalized_BSDE}.
    Thus, the condition (ii) holds. 

    Consequently, all the conditions in Lemma~\ref{lem:general_tau_conv} are satisfied, we have by the lemma that \(\underline\tau_t^n\uparrow \tau_t^{*,x}\) \(\P\)-a.s. as $n\to \infty$.  

    Last, we have by Step 1 and \eqref{eq:compare} that for any $n\in \mathbb{N}$ and~$t\in[0,T]$, 
    \[
        Y^{x;N_n}_t-\lambda_n (T-t)\log2 \leq \overline Y^{x;N_n,\lambda_n}_t-\lambda_n (T-t)\log2 \leq Y_t^x\quad \mbox{$\mathbb{P}$-a.s..}
    \]
    This implies that for any $n\in \mathbb{N}$ and $t\in[0,T]$, \(\underline\tau_t^n \le \tau_t^{x;N_n,\lambda_n}\le \tau_t^{*,x}\) (see \eqref{eq:temp_stop}). Thus, the convergence, \(\underline\tau_t^n\uparrow \tau_t^{*,x}\) \(\P\)-a.s., ensures the statement (i) to hold.
    
    
    \noindent \textit{Step 3: Proof of (ii).}
    Since $\tau_t^{x;N_n,\lambda_n}\to \tau_t^{*,x}$ $\mathbb{P}$-a.s. (by Step 2) and $R$ is continuous, by the properties of the integrand $\operatorname{I}_t^{x;\tau}$ for any $\tau\in{\cal T}_t$ and $t\in[0,T]$ given in Remark~\ref{rem:wellposed_stopping}, we have that for any $t\in[0,T]$, $\|\operatorname{I}_t^{x;\tau_t^{x;N_n,\lambda_n}}- \operatorname{I}_t^{x;\tau_t^{*,x}}\|_{L^2}\to 0$ as $n\to\infty$.

    Moreover, since \(\operatorname{I}_t^{x;\tau}\in L^2({\cal F}_T)\) for any \(\tau\in{\cal T}_t\) and \(t\in[0,T]\), by \cite[Theorem 3.1]{pardoux1990adapted} we can define
     \((\widetilde Y^{t,x;\tau},\widetilde Z^{t,x;\tau})\in
    \mathbb S^2(\mathbb R)\times \mathbb L^2(\mathbb R^d)\) as a unique solution of
    \begin{align}\label{eq:BSDE_g_eval_stop}
        \widetilde Y_s^{t,x;\tau}
        =
       \operatorname{I}_t^{x;\tau} + \int_s^T g(u,\widetilde Z_u^{t,x;\tau})du
        -\int_s^T \widetilde Z_u^{t,x;\tau} dB_u,
        \quad s\in[0,T]. 
    \end{align}
    Clearly, it holds that for all $t\in[0,T]$, $\mathbb{P}$-a.s.,
    \[
        \widetilde Y_t^{t,x;\tau_t^{x;N_n,\lambda_n}}={\cal E}_t^g[\operatorname{I}_t^{x;\tau_t^{x;N_n,\lambda_n}}]\quad \mbox{and}\quad \widetilde Y_t^{t,x;\tau_t^{x;*}}={\cal E}^g_t[\operatorname{I}_t^{x;\tau_t^{*,x}}]=V_t^x.
    \]
    
    Since $g$ is $\kappa$-Lipschitz, the parameters of the BSDEs \eqref{eq:BSDE_g_eval_stop} satisfy the conditions given in \cite[Section~5]{el1997backward} (with exponent $2$) under any $\tau\in {\cal T}_t$ and $t\in[0,T]$, we are able to apply \cite[Proposition 5.1]{el1997backward} to have the following a priori estimate: there exists $C >0 $ (depends on $\kappa$ and $T$, but not on $n$) such that for any \( n\in \N \) and $t\in[0,T]$,
    \begin{align*}
        \big\|{\cal E}_t^g[\operatorname{I}_t^{x;\tau_t^{x;N_n,\lambda_n}}]-V_t^x  \big\|_{L^2}\leq \| \widetilde Y^{t,x;\tau_t^{x;N_n,\lambda_n}}- \widetilde Y^{t,x;\tau_t^{*,x}} \|_{\mathbb{S}^2} 
        \le C\|\operatorname{I}_t^{x;\tau_t^{x;N_n,\lambda_n}}- \operatorname{I}_t^{x;\tau_t^{*,x}}\|_{L^2}. 
    \end{align*}
    Since the last term vanishes as $ n \to \infty $, this completes the proof.
    \end{proof}
    \color{black}

    \subsection{Proof of results in Section \ref{sec:policy_iter}}\label{proof:thm:policy_improvement} 
    \begin{proof}[Proof of Theorem \ref{thm:policy_improvement}] We start by proving (i). Let $n\in \mathbb{N}$ be given. Since $\overline{Y}^{x;N,\lambda}_t\geq \overline{Y}^{x;N,\lambda,\pi}_t$ $\mathbb{P}$-a.s., for all $t\in[0,T]$ and $\pi\in \Pi$ (see Theorem \ref{thm:verification}\;(ii)), it suffices to show that $\overline Y_t^{x;N,\lambda,\pi^{n+1}}\geq \overline Y_t^{x;N,\lambda,\pi^{n}}$, $\mathbb{P}$-a.s., for all $t\in[0,T]$.
        
        For notational simplicity, let $(\overline Y^{n},\overline Z^{n}):=(\overline Y^{x;N,\lambda,\pi^{n}},\overline Z^{x;N,\lambda,\pi^{n}})$, $(\overline Y^{n+1},\overline Z^{n+1}):=(\overline Y^{x;N,\lambda,\pi^{n+1}},\overline Z^{x;N,\lambda,\pi^{n+1}}).$ 
        In analogy, let 
        $\overline F^{n}:=\overline F^{x;N,\lambda,\pi^{n}}$, $\overline F^{n+1}:=\overline F^{x;N,\lambda,\pi^{n+1}}$. 
        
        Then we set for every~$t\in[0,T]$ 
    \begin{align*}
        \phi_t:=(\overline F_t^{n+1}-\overline F_t^{n})(\overline Y_t^{n},\overline Z_t^{n}),\quad 
        \Delta Y_t:= \overline Y_t^{{n+1}}-\overline Y_t^{{n}},\quad 
        \Delta Z_t:=(\Delta Z_{t,1},\dots,\Delta Z_{t,d})^\top, 
    \end{align*}
    with $\Delta Z_{t,i}:=\overline Z_{t,i}^{{n+1}}-\overline Z_{t,i}^{{n}}$ for $i=1,\dots,d,$ where $\overline Z_{t,i}^{{n+1}}$ and $\overline Z_{t,i}^{{n}}$ denote the $i$-th component of $\overline Z_{t}^{{n+1}}$ and $\overline Z_{t}^{{n}}$, respectively. 
    
    Moreover, we denote for every $t\in[0,T]$ and $i=1,\dots,d$,
    \begin{align*}
    \begin{aligned}
    n_t:=&\frac{1}{\Delta Y_t}\Big(\overline F_t^{{n+1}}(\overline Y_t^{n+1},\overline Z_t^{n+1})-\overline F_t^{{n+1}}(\overline Y_t^{{n}},\overline Z_t^{{n+1}})\Big){\bf  1}_{\{\Delta Y_t\neq 0\}},\\
    m_{t,i}:=&\frac{1}{\Delta Z_{t,i}}\Big(\overline F^{{n+1}}_t(\overline Y_t^{n},(\overline Z_{t,1}^{n},\dots,\overline Z_{t,i-1}^{n},\overline Z_{t,i}^{n
    +1},\overline Z_{t,i+1}^{n
    +1},\dots,\overline Z_{t,d}^{n
    +1})^\top)  \\
    &\quad\quad\quad - \overline F^{{n+1}}_t(\overline Y_t^{n},(\overline Z_{t,1}^{n},\dots,\overline Z_{t,i-1}^{n},\overline Z_{t,i}^{n},\overline Z_{t,i+1}^{n
    +1},\dots,\overline Z_{t,d}^{n
    +1})^\top)\Big){\bf 1}_{\{\Delta Z_{t,i}\neq 0\}}.
    \end{aligned}
    \end{align*}

    Clearly, $(\Delta Y,\Delta Z)$ satisfies the following BSDE: for $t\in[0,T]$,
    \[
    \Delta Y_t = \int_t^T ( n_s\Delta Y_s + m_s^\top \Delta Z_s  + \phi_s )ds - \int_t^T \Delta Z_s  d B_s.
    \]
    Moreover, by construction \eqref{eq:robust_iter_control}, ${\pi}_t^{n+1} = \mathrm{argmax}_{a\in [0,1]}\{ N(R(X_t^x)-\overline Y_t^{n}) a-\lambda {\cal H}(a)\}$, for all $t\in [0,T].$ This ensures that $\phi_t\geq 0$ for all $t\in[0,T]$. 

    Clearly, it holds that $n_t=-(\beta_t+N\pi_t^{n+1}){\bf  1}_{\{\Delta Y_t\neq 0\}}$ for all $t\in[0,T]$. Moreover, by Assumption \ref{as:reward}\;(ii) and the fact that $\pi^{n+1}\in\Pi$ has values in $[0,1]$, $(n_t)_{t\in[0,T]}$ is uniformly bounded. Furthermore, by the Lipschitz property of $g$ (see Definition\;\ref{def:g_expect}\;(ii)), for every $i=1,\dots,d$, $(m_{t,i})_{t\in[0,T]}$ is uniformly bounded by $\kappa>0$. 

    Therefore, by letting {\color{blue} \color{black} $\Gamma_t := \exp( \int_0^t m_sdB_s + \int_0^t(n_s - \frac{1}{2}|m_s|^2 )ds)$} for $t\in[0,T]$, applying It\^o's formula into $(\Gamma_t\Delta Y_t)_{t\in[0,T]}$ and taking the conditional expectation $\mathbb{E}_t[\cdot]$, 
    \[
    \Delta Y_t = \Gamma_t^{-1} \mathbb{E}_t\bigg[\int_t^T \Gamma_s \phi_s ds \bigg],\quad \mbox{$\mathbb{P}$-a.s.,}\quad \mbox{for all}\;\;t\in[0,T].
    \]
    Since $\phi \ge 0$, we have $\Delta Y_t \ge 0$ $\mathbb{P}$-a.s., for all $t\in[0,T]$. 
    Therefore, the part~(i) holds.

    \vspace{0.5em}
    \noindent We now prove (ii). 
    Set for every $n\in \mathbb{N}$
    \[
    \overline F:=\overline F^{{x;N,\lambda}},\quad  \Delta^{n+1}\overline F :=\overline F - \overline F^{{n+1}},\quad \overline Y := \overline{Y}^{x;N,\lambda},\quad \Delta^n \overline{Y}_t := \overline{Y}_t - \overline{Y}^{n}_t 
    \]
    In analogy, set $\overline Z := \overline{Z}^{x;N,\lambda}$ and $\Delta^n \overline{Z}_t := \overline{Z}_t - \overline{Z}^{n}$. 

    We first note that for any $n\in \mathbb{N}$, $\omega \in \Omega $, $t\in[0,T]$, $y,\hat y \in \mathbb{R}$ and $z,\hat z\in \mathbb{R}^d$
    \[
        \begin{aligned}
		|\overline F_t^{{n+1}}(\omega,y,z)-\overline F_t^{{n+1}}(\omega,\hat{y},\hat{z})|&\leq (\beta_t(\omega)+N) |y-\hat y| + |g(\omega,t,z)-g(\omega,t,\hat z)| \\
		&\leq (C_\beta+ \kappa + N)\big( |y-\hat y|+ |z-\hat z|\big).
		\end{aligned}
    \]

    Set $C_1:=C_\beta+ \kappa + N>0$. By the a priori estimate in \cite[Theorem~4.2.3]{zhangjianfeng17}, there exists some $C_2>0 $ (that depends on $C_1,T,d$ but not on $n,\lambda$), such that\footnote{For any $t\in[0,T]$ and $Y\in \mathbb{S}^2(\mathbb{R})$, denote by $\|Y\|_{\mathbb{S}^2_t}^2:=\mathbb{E}[\sup_{s\in[t,T]}|Y_s|^2]$. In analogy, for any $Z\in \mathbb{L}^2(\mathbb{R}^d)$, denote by $ \|Z\|^2_{\mathbb{L}^2_t}:=\mathbb{E}[\int_t^T|Z_s|^2ds] $.} 
    \begin{align*}
                \|\Delta^{n+1} \overline Y\|_{\mathbb{S}^2_t}^2 + \|\Delta^{n+1} \overline Z\|_{\mathbb{L}^2_t}^2 & \le C_2 \mathbb{E}\bigg[\int_t^T \big| \Delta^{n+1}\overline{F}_s( \overline{Y}_s,\overline{Z}_s ) \big| ds \bigg]^2 \\
                & \le C_2T  \int_t^T \mathbb{E} \Big[\big| \Delta^{n+1}\overline{F}_s( \overline{Y}_s,\overline{Z}_s ) \big|^2\Big] ds\quad \mbox{for all $t\in[0,T]$},
    \end{align*}
    where we have used the Jensen's inequality with exponent $2$ for the last inequality.
    
    Moreover, by setting $L^{n}_s:= \frac{N}{\lambda}( R(X^x_s) - \overline{Y}^{n}_s)$ and $ L_s:= \frac{N}{\lambda}( R(X^x_s) - \overline{Y}_s)$ and noting that $\pi_s^{n+1}=(1+e^{-L_s^n})^{-1}$,  
    we compute that for all $s\in[t,T]$
    \begin{align*}
        \big| \Delta^{n+1}\overline{F}_s( \overline{Y}_s,\overline{Z}_s ) \big|
        & = \lambda \bigg| ( L_s - L^{n}_s ) - \frac{L_s - L^{n}_s}{1+e^{-L^{n}_s}} + \log(1+e^{-L^{n}_s} ) - \log(1+e^{-L_s} )  \bigg| \\
        & \le 3\lambda |L_s - L^{n}_s| =3N \big| \Delta^{n}\overline{Y}_s  \big| 
    \end{align*}
    where we have used the fact that $|\log(1+e^x) - \log(1+e^y)| \le |x-y|$ for all $x,y\in \mathbb{R} $.

    By setting $C_3:=9C_2TN^2>0$, we have shown that for all $t\in[0,T]$
    \begin{align}\label{eq:one_step_integ}
        \|\Delta^{n+1} \overline Y\|_{\mathbb{S}^2_t}^2 + \|\Delta^{n+1} \overline Z\|_{\mathbb{L}^2_t}^2 \le {C}_3 \int_t^T \mathbb{E} \Big[ \big| \Delta^{n} \overline{Y}_s \big|^2 \Big] ds \le C_3 \int_t^T \|\Delta^{n} \overline Y\|_{\mathbb{S}^2_s}^2 ds.
    \end{align}
    

    By using the same arguments presented for \eqref{eq:one_step_integ} iteratively, 
    \begin{align*}
         &\|\Delta^{n+1} \overline Y\|_{\mathbb{S}^2}^2 + \|\Delta^{n+1} \overline Z\|_{\mathbb{L}^2}^2 \le C_3 \int_0^T \|\Delta^{n} \overline Y\|_{\mathbb{S}^2_{t_n}}^2 dt_n\\
         &\qquad \le  ({C_3})^2 \int_0^T \int_{t_{n}}^T \|\Delta^{n-1} \overline Y\|_{\mathbb{S}^2_{t_{n-1}}}^2 dt_{n-1} \; dt_n \\
         &\qquad  \le \cdots  \le ({C}_3)^{n} \int_0^T \int_{t_{n}}^T \cdots \int_{t_2}^T \|\Delta^{1} \overline Y\|_{\mathbb{S}^2_{t_{1}}}^2 dt_1  \cdots dt_{n-1} \; dt_n \\
         &\qquad \le  ({C}_3)^{n} \|\Delta^{1} \overline Y\|_{\mathbb{S}^2}^2 \int_0^T \int_{t_{n}}^T \cdots \int_{t_2}^T 1\; dt_1  \cdots dt_{n-1} \; dt_n  = ({C}_3)^{n} \frac{T^n}{n!} \|\Delta^{1} \overline Y\|_{\mathbb{S}^2}^2,
    \end{align*}
    together with the 1-Lipschitz continuity of the logistic function $(1+e^{-x})^{-1}$, 
    we have
    \begin{align*}
        \| \pi^{n+1} - \pi^{*}\|^2_{\mathbb{S}^2} 
         \le \frac{N}{\lambda} \mathbb{E}\bigg[\sup_{t\in[0,T]}| \overline{Y}^{x;N,\lambda,\pi^{n}}_t - \overline{Y}^{x;N,\lambda}_t |^2 \bigg]= \frac{N}{\lambda} \|\Delta^n \overline{Y} \|_{\mathbb{S}^2}.
    \end{align*}
    The monotonicity of $\pi^{n+1}_t $ as $n\uparrow \infty $ is obvious from the logistic function form on $\overline{Y}^{x;N,\lambda,\pi^{n}} $, which completes the proof. 
    \end{proof} 

    Let us consider the following controlled forward-backward SDEs for any $\check \pi \in \check \Pi$: for any $(t,x)\in[0,T]\times \mathbb{R}^d$ and $s\in[0,T]$,
        \begin{align}\label{eq:controlled_fbsde}
            \begin{aligned}
            \check Y_s^{t,x;N,\lambda,\check\pi}&=R (\check X_T^{t,x}) + \int_s^T
		\check F_u^{N,\lambda,\check\pi}(\check X_u^{t,x},\check Y_u^{t,x;N,\lambda,\check\pi},\check Z_u^{t,x;N,\lambda,\check\pi}){\bf 1}_{\{u\geq t\}}du\\
        &\quad -\int_s^T \check Z_u^{t,x;N,\lambda,\check\pi} dB_u.
            \end{aligned}
        \end{align}
    	where $\check X_s^{t,x}=x+(\int_t^s\widetilde b^o(s,\check X_s^{t,x})ds+\widetilde \sigma^o(s,\check X_s^{t,x})dB_s){\bf1}_{\{s\geq t\}}$.
    	
        One can deduce that there exists a unique solution $(\check Y^{t,x;N,\lambda,\check\pi},\check Z^{t,x;N,\lambda,\check\pi})$ to \eqref{eq:controlled_fbsde} (see Remark \ref{rem:explore_control_BSDE}). In particular, 
        since $\check X^{0,x}=X^{x}$ (see \eqref{eq:refer_semi} and Remark~\ref{rem:suff_refer_semi}\;(ii)), $(\check Y^{0,x;N,\lambda,\check\pi},\check Z^{0,x;N,\lambda,\check\pi})$ is the unique solution $(\overline Y^{x;N,\lambda,\check \pi(X^x)},\overline Z^{x;N,\lambda,\check \pi(X^x)})$ to \eqref{eq:g_BSDE_explore} under $\check \pi(X^x)=(\check \pi_t(X_t^x))_{t\in[0,T]}\in \Pi$.

        Then we observe the following Markovian representation of \eqref{eq:controlled_fbsde}. %
        \begin{lemma}\label{lem:Markovian_under_closed_loop_policy}
            Under Setting~\ref{set:markov-BSDE}, let $\check \pi\in \check\Pi $ be given. 
            \begin{itemize}
                \item[(i)] There exist two Borel measurable functions $v^{N,\lambda,\check \pi}:[0,T]\times \mathbb{R}^d\to \mathbb{R}$ and $w^{N,\lambda,\check \pi}:[0,T]\times \mathbb{R}^d\to \mathbb{R}^d$ such that for every $t\leq s\leq T$, $\mathbb{P}\otimes ds$-a.e.,
                \begin{align}\label{eq:deter_YZ}
                \check Y_s^{t,x;N,\lambda,\check\pi}=v^{N,\lambda,\check \pi}(s,\check X_s^{t,x}),\quad \check Z_s^{t,x;N,\lambda,\check\pi}=\big((\widetilde \sigma^o)^\top w^{N,\lambda,\check \pi}\big)(s,\check X_s^{t,x}),
                \end{align}
                where $(\check Y^{t,x;N,\lambda,\check\pi},\check Z^{t,x;N,\lambda,\check\pi})$  is the unique solution of \eqref{eq:controlled_fbsde}. 
                \item[(ii)] Furthermore, if $\check\pi_t(\cdot)$ is continuous on $\mathbb{R}^d$ for any $t\in[0,T]$,  
                one can find a function $v^{N,\lambda,\check \pi}:[0,T]\times \mathbb{R}^d\to \mathbb{R}$ which satisfies the property given in \eqref{eq:deter_YZ} and is a 
                viscosity solution of the following PDE: 
                \[(\partial_t v+\mathcal{L} v)(t,x) + \check{F}^{N,\lambda,\check\pi }_t(x, v(t,x), ((\widetilde \sigma^o)^{\top}\nabla  v)(t,x)) =0,\quad (t,x)\in [0,T)\times \mathbb{R}^d,
                \]
                with $v(T,\cdot) = R(\cdot)$, where the infinitesimal operator $\mathcal{L}$ is defined as in Remark~\ref{rem:optimal_visco}. 
                In particular, $\check v^{N,\lambda , \check \pi}$ 
                is locally Lipschitz w.r.t.~$x$ and H\"older continuous w.r.t.~$t$ (Hence, it is continuous on $[0,T]\times \mathbb{R}^d$). 
            \end{itemize}
        \end{lemma}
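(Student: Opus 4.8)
The plan is to establish the nonlinear Feynman--Kac correspondence between the Markovian forward--backward system \eqref{eq:controlled_fbsde} and its associated semilinear PDE: part (i) is obtained from the flow property of the diffusion together with uniqueness of the BSDE, while part (ii) relies on the viscosity-solution machinery of \cite{KHM}, exactly as in Remark \ref{rem:optimal_visco}.

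For part (i), I would first note that under Setting \ref{set:markov-BSDE}(ii) the coefficients $\widetilde b^o,\widetilde\sigma^o$ are Lipschitz with linear growth, so that $\check X^{t,x}$ is the unique strong solution of its SDE and enjoys the flow property $\check X_u^{t,x}=\check X_u^{s,\check X_s^{t,x}}$ for $t\le s\le u$. Defining $v^{N,\lambda,\check\pi}(t,x):=\check Y_t^{t,x;N,\lambda,\check\pi}$, I would combine this flow property with uniqueness of the BSDE solution (Remark \ref{rem:explore_control_BSDE}(i)) to see that $(\check Y^{t,x},\check Z^{t,x})$ and the solution started from $(s,\check X_s^{t,x})$ coincide on $[s,T]$, whence $\check Y_s^{t,x}=v^{N,\lambda,\check\pi}(s,\check X_s^{t,x})$ $\mathbb{P}\otimes ds$-a.e. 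Borel measurability of $v^{N,\lambda,\check\pi}$ follows from the measurable dependence of the FBSDE solution on the initial datum $(t,x)$, which is standard for Lipschitz coefficients and a Borel-measurable generator (e.g.\ by the measurability of each Picard iterate). The existence of a Borel map $w^{N,\lambda,\check\pi}$ with $\check Z_s^{t,x}=((\widetilde\sigma^o)^\top w^{N,\lambda,\check\pi})(s,\check X_s^{t,x})$ is then read off from the general representation of the control process of a Markovian BSDE (via the PDE gradient in the smooth case and a mollification/approximation argument in general).

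For part (ii), the additional continuity of $\check\pi_t(\cdot)$, together with the continuity of $r,R,\beta,g$ assumed in Setting \ref{set:markov-BSDE}, makes the driver $\check F_\cdot^{N,\lambda,\check\pi}(\cdot,\cdot,\cdot)$ continuous in all variables and Lipschitz in $(y,z)$. I would verify that $\check F^{N,\lambda,\check\pi}$ and $R$ satisfy the hypotheses (M1b) and ($\textrm{M1b}^c$) of \cite{KHM}---precisely as checked in Remark \ref{rem:optimal_visco} for the optimal driver $\check F^{N,\lambda}$---and then invoke \cite[Theorem 8.12]{KHM} to identify $v^{N,\lambda,\check\pi}$ with a viscosity solution of the stated PDE with terminal value $R$, and to extract the claimed local Lipschitz regularity in $x$ and H\"older regularity in $t$.

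The main obstacle I anticipate lies in the $Z$-representation of part (i) under only Borel measurability of $\check\pi$: without smoothness of $v^{N,\lambda,\check\pi}$ one cannot simply set $w=\nabla v$, so the identification of $\check Z$ as a deterministic function of $(s,\check X_s^{t,x})$ must be justified through the representation theory for Markovian BSDEs, with care taken that a single Borel map $w^{N,\lambda,\check\pi}$ works consistently across all starting points $(t,x)$. A secondary difficulty is that in part (ii) the driver is in general only continuous (not Lipschitz) in $x$, so the local Lipschitz regularity of $v^{N,\lambda,\check\pi}$ has to be drawn from the a priori estimates of \cite{KHM} rather than from a naive contraction argument.
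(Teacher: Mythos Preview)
Your proposal is essentially correct, and for part (ii) it coincides with the paper's proof: verify (M1b) and ($\textrm{M1b}^c$) of \cite{KHM}, invoke \cite[Theorem~8.12]{KHM}, use the flow property for the representation, and read off the regularity from the $L^p$-estimates underlying that theorem.

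The difference lies in part (i). The paper does \emph{not} build $v$ and $w$ by hand. Instead, it verifies directly that the driver $\check F_\cdot^{N,\lambda,\check\pi}$ satisfies condition (M1b) of \cite{KHM} (uniform Lipschitz in $(y,z)$ via the boundedness of $\beta$ and $\check\pi$ and the Lipschitz property of $g$; linear growth of $\check F_t^{N,\lambda,\check\pi}(x,0,0)$ via the linear growth of $r,R$ and the bound $|\mathcal H|\le\log 2$), and then cites \cite[Theorem~8.9]{KHM}, which delivers \emph{both} Borel functions $v^{N,\lambda,\check\pi}$ and $w^{N,\lambda,\check\pi}$ satisfying \eqref{eq:deter_YZ} simultaneously. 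This completely sidesteps the obstacle you flag concerning the $Z$-representation under mere Borel measurability of $\check\pi$: the existence of a single Borel $w$ consistent across all $(t,x)$ is already packaged into the cited theorem, so no mollification or approximation argument is needed. Your hands-on route via the flow property and Picard iterates would work for $v$, but the $Z$-part would require you to essentially reprove the content of \cite[Theorem~8.9]{KHM}; the paper's approach is shorter and avoids this.
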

        \begin{proof}
            We start with proving (i). According to \cite[Theorem 8.9]{KHM}, it suffices to show that the generator $\check F_\cdot^{N,\lambda,\check \pi}(\cdot,\cdot,\cdot) $ given in \eqref{eq:Markov_generator} satisfies the condition (M1b) given in \cite{KHM} (noting that $\check X^{t,x}$ given in \eqref{eq:controlled_fbsde} satisfies (M1f) therein; see Remark \ref{rem:refer_semi}). Note that 
        $\beta_t$ and $\check\pi_t(x)$ are uniformly bounded (see Setting \ref{set:markov-BSDE}), and $g$ is uniformly Lipschitz w.r.t.~$z$ (see Definition \ref{def:g_expect}). Therefore, $\check F_\cdot^{N,\lambda,\check\pi}(\cdot,\cdot,\cdot)$ is uniformly Lipschitz w.r.t.~$(y,z)$ with the corresponding Lipschitz constant depending only on $C_{\beta}, \lambda, N $ (not on $t,x$). Moreover, for all $(t,x)\in[0,T]\times \mathbb{R}^d$,
        \[
            |\check F_t^{N,\lambda,\check\pi}(x,0,0)| \le |r(x)| + N|R(x)\check\pi_t(x)| + \lambda \big|{\cal H}\big(\check\pi_t(x)\big)\big|.
        \]
        Note that $|{\cal H}(\check\pi_t(\cdot))|$ is bounded by $\log 2 $ (see Remark \ref{rem:entropy}), and $r(\cdot)$ and $R(\cdot)$ are linearly growing. Therefore, there exists a constant $C $ only depends on $C_{r,R}, N, \lambda $ (not on $(t,x)$) such that \(|\check F^{N,\lambda,\check\pi}(t,x,0,0)| \le C(1+|x| )\) for all $(t,x)\in [0,T]\times \mathbb{R}^d$. Thus, (M1b) holds true. 

        We now prove (ii). As $r(x),R(x),\check\pi_t(x)$ are continuous w.r.t $x$ for all $t \in [0,T] $, 
        the condition ($\mathrm{M1b^c}$) given in \cite{KHM} holds true. Therefore, an application of \cite[Theorem~8.12]{KHM} ensures 
        that 
        \(
        v^{N,\lambda,\check \pi}(t,x):=\check{Y}_t^{t,x;N,\lambda,\check\pi}\) for \((t,x)\in[0,T]\times \mathbb{R}^d
        \)
        is a viscosity solution
        of the PDE given in the statement (ii); see \eqref{eq:controlled_fbsde}. Moreover, using the flow property of $\{\check X_s^{t,x};t\leq s\leq T,x\in \mathbb{R}^d\}$ and the uniqueness of the solution of \eqref{eq:controlled_fbsde}, we have for $t\leq s\leq T$, $\mathbb{P}\otimes ds$-a.e., \(v^{N,\lambda,\check \pi}(s,\check X_s^{t,x})=\check Y_s^{s,\check X_s^{t,x};N,\lambda, \check \pi}= \check Y_s^{t,x;N,\lambda, \check \pi},\) that is, the property in \eqref{eq:deter_YZ} holds.
        Lastly, the regularity of $v^{N,\lambda,\check \pi}$ follows from the argument in the proof of \cite[Theorem~8.12]{KHM}, which employs the $L^p$-estimation techniques in
        the proof of \cite[Lemma~2.1 and Corollary~2.10]{pardoux2005backward}. 
        \end{proof}

    \begin{proof}[Proof of Corollary~\ref{coro:Markovian_of_policy_iter_closed_loop}] Part (i) follows immediately from an iterative application of Lemma \ref{lem:Markovian_under_closed_loop_policy}\;(i). In a similary manner, Part (ii) is obtained by iteratively applying Lemma~\ref{lem:Markovian_under_closed_loop_policy}\;(ii). Indeed, as $\check\pi_t^1(\cdot)$ is continuous, the corresponding function $v^{N,\lambda,1}$ satisfies all the properties in Part (i) and is also a viscosity solution of the PDE given in the statement (with the generator $\check F_\cdot^{N,\lambda ,\check \pi^1})$. In particular, it is continuous on $[0,T]\times \mathbb{R}^d$, the next iteration policy $\check \pi_t^2(\cdot)$ ,$t\in[0,T]$, (defined as in \eqref{eq:robust_iter_control_deter}) is also continuous on $\mathbb{R}^d$. The same argument can therefore be applied at each subsequent iteration. This completes the proof.
    \end{proof}


\bibliographystyle{siamplain}
\bibliography{main}
\end{document}